\documentclass[preprint,12pt]{elsarticle}
\makeatletter
\def\ps@pprintTitle{
  \let\@oddhead\@empty
  \let\@evenhead\@empty
  \let\@oddfoot\@empty
  \let\@evenfoot\@oddfoot
}

\makeatletter
\renewcommand{\theaffn}{
  \ifcase\c@affn 
    
  \or $\dagger$   
  \or $\ddagger$ 
  \fi
}
\makeatother

\usepackage[a4paper, left=0.8in, right=0.8in, top=0.8in, bottom=0.8in]{geometry}
\usepackage{amssymb}
\usepackage{paracol} 
\usepackage{amsthm}
\usepackage{amsmath}
\usepackage[numbers]{natbib}  
\usepackage{xcolor} 
\usepackage[linesnumbered,ruled,vlined]{algorithm2e}
\usepackage{algpseudocode}  
\usepackage{booktabs}
\usepackage{multirow}
\usepackage{array}
\usepackage{makecell}
\usepackage{enumitem}
\usepackage{mathrsfs}
\usepackage{graphicx}
\usepackage{subcaption}
\usepackage{tikz}
\usetikzlibrary{positioning, arrows.meta, shapes.geometric}

\newtheorem{theorem}{Theorem}[section]

\theoremstyle{definition}

\newtheorem{example}[theorem]{Example}
\newtheorem{remark}[theorem]{Remark}
\newtheoremstyle{noteStyle}
  {}{}{\normalfont}{}{\bfseries}{.}{ }{}
\theoremstyle{noteStyle}
\newtheorem{note}{Note}

\begin{document}

 \begin{frontmatter}

\title{
ENPINN: Energy-Norm-Guided Gradient-Enhanced PINNs for Generalized Transport Problems with Sharp Gradients}

\author[label1]{Subhendu Maity} 
\ead{subhendu_2421ma12@iitp.ac.in}

\author[label1]{Pratibhamoy Das}
\ead{pratibhamoy86math@gmail.com, pratibhamoy@iitp.ac.in}

\author[label1]{Arihant Patawari} 
\ead{arihant_2221ma09@iitp.ac.in}

\author[label2]{Ameya D. Jagtap} 
\ead{ajagtap@wpi.edu}

\affiliation[label1]{organization={Department of Mathematics, Indian Institute of Technology, Patna},
             city={Bihar},
             postcode={801103}, 
            country={India}}

\affiliation[label2]{organization={Aerospace Engineering Department, Worcester Polytechnic Institute, Worcester, MA 01609},{USA}}

\begin{abstract}
    {Physics-informed neural networks (PINNs) have emerged as a meshless alternative to conventional numerical methods for solving partial differential equations (PDEs). However, their limited ability to capture sharp gradients can lead to substantial errors when resolving boundary and interior layers. Here, we introduce an energy-norm-enhanced PINN (ENPINN) that incorporates gradient information and variational structure into the loss function to improve the resolution of layer-dominated solutions. We first examine two related formulations: weak-loss PINNs (WLPINNs), which incorporate test functions into the conventional PINN residual, and gradient-enhanced PINNs (gPINNs), which augment the loss with spatial derivatives of the PDE residual. By analyzing these formulations, we identify their limitations in resolving steep solution gradients and motivate the systematic construction of ENPINN. We establish theoretically how the energy-norm error depends on the ENPINN loss and show that a suitably modified residual-derivative term is essential for accurately capturing boundary layers. We further establish the existence of neural-network approximations with arbitrarily small energy error and derive corresponding derivative bounds, providing a theoretical foundation for the proposed framework. The performance of ENPINN is assessed through systematic comparisons with existing PINN variants for convection–diffusion–reaction problems exhibiting steep gradients. Numerical experiments include a combustion model, a coupled multiscale system, a two-dimensional Burgers equation with an interior layer, and a three-dimensional time-dependent problem. Across these examples, ENPINN consistently achieves low energy-norm errors, demonstrating its ability to resolve sharp gradients in convection- and reaction-dominated regimes. These results highlight the potential of combining variational test functions with gradient-based residual information in the development of reliable neural PDE solvers.

\vspace{0.2cm}
    \noindent \textbf{Keywords: } \textit{Physics-Informed Neural Network, Boundary Layer, Sharp Gradients, Transport Problems}}
\end{abstract}

\end{frontmatter}

\section{Introduction}\label{sec:introduction}
Scientific deep learning (SciDL) has emerged as a powerful framework for scientific computing, enabling data-driven and physics-constrained approaches across a broad range of problems in science and engineering. Applications span high-speed flows \cite{mao2020physics,jagtap2022physics,abbasi2025challenges}, turbulent, multiphase and combustion flows \cite{menon2026intelligent}, magnetohydrodynamic flows \cite{patawari2025modified}, Riemann problems \cite{peyvan2024riemannonets}, stiff chemical kinetics \cite{goswami2024learning}, fractured porous media \cite{abbasi2025history}, option pricing \cite{patawari2025traditional}, material identification \cite{shukla2021physics}, and nonlinear water waves \cite{jagtap2022deepD}, among many others. A key attraction of SciDL is its mesh-free formulation, which provides an alternative to conventional numerical discretizations for partial differential equations (PDEs). This flexibility is enabled, in part, by the universal approximation theorem (UAT) \cite{uat_cybenko_1989}, which establishes the capacity of neural networks to approximate broad classes of functions.

Within this broader SciDL framework, the incorporation of governing physical laws into neural-network training has given rise to physics-informed learning. Physics-informed neural networks (PINNs) \cite{pinn_2018_indroce} represent one of the most prominent formulations, in which the governing PDEs are embedded directly into the training objective and enforced at a set of collocation points. By combining physical constraints with neural-network representations, PINNs can approximate solutions without requiring labelled solution data and have been successfully applied to diverse classes of PDEs \cite{menon2025anant,menon2026scientific}. Their development has subsequently motivated a range of strategies to improve scalability, accuracy and trainability. Conservative PINNs (cPINNs) \cite{cpinn_intro}, for example, introduced domain decomposition for nonlinear conservation laws, whereas extended PINNs (XPINNs) \cite{Xpinn} proposed a generalized space–time domain-decomposition framework to broader classes of PDEs and enabled parallel training \cite{shukla2021parallel,hu2021extended}. Subsequent formulations have further incorporated temporal decomposition \cite{penwarden2023unified} and augmented representations \cite{hu2023augmented}. In parallel, adaptive activation functions have been developed to enhance the expressivity and trainability of PINNs \cite{jagtap2020adaptive,jagtap2020locally,jagtap2023important,jagtap2022deep}. The theoretical foundations of PINNs have also been progressively strengthened through analyses of approximation properties, convergence and error estimates \cite{de2024error,mhaskar2026approximation,shin2020convergence,hu2021extended}.

Beyond point-wise enforcement of the governing equations, SciDL has also explored physics-informed formulations based on their variational structure. Variational PINNs (VPINNs) \cite{vpinn_intro} incorporate the weak form of the governing PDEs directly into the training objective, providing a variational alternative to the strong-form residual minimization used in conventional PINNs. Building on this idea, hp-VPINNs \cite{hp-vpinn_intro} employ a subdomain Petrov–Galerkin formulation to extend the variational framework to one- and two-dimensional PDEs. The performance and applicability of PINNs and hp-VPINNs have been investigated extensively for linear steady-state convection–diffusion problems, including regimes characterized by sharp solution layers \cite{FHJ24}. Such studies are particularly relevant to the broader challenge of resolving multiscale and layer-dominated phenomena, where the choice of physical formulation and neural representation can strongly influence the accuracy and robustness of SciDL methods.

More recently, advances in neural-network architectures have broadened the SciDL landscape beyond conventional multilayer perceptrons. Kolmogorov–Arnold Networks (KANs) \cite{wang2025kolmogorov}, which replace fixed activation functions on network nodes with learnable univariate functions on edges, offer an alternative representation with potentially greater flexibility for capturing complex solution structures. Their integration with physics-informed learning has led to physics-informed KANs (PI-KANs) \cite{menon2026fekan,zhao2025pikan}, establishing another class of SciDL methods for solving PDEs. Early applications include bubble dynamics \cite{zhang2026bubbleokan} and high-dimensional problems \cite{menon2025anant}. Thus, PINNs, XPINNs, VPINNs and PI-KANs can be viewed within the same broader SciDL paradigm: they all leverage neural representations to construct mesh-free approximations while incorporating governing physical laws, but differ in how the physics is enforced and in the underlying network architecture. This progression—from point-wise and variational formulations to alternative neural representations, reflects a broader effort to develop more expressive, robust and adaptable SciDL frameworks for complex scientific systems.

Despite the remarkable success of PINNs, their ability to accurately resolve boundary layers is hindered by spectral bias \cite{maity2026ti,arihant2026nn}. This limitation has motivated extensive investigation into the representation and resolution of high-frequency and multiscale features within PINN frameworks \cite{jagtap2022deep,xu2019frequency}. One promising direction is a semi-analytic PINN formulation \cite{semianalytic_rectangulardomain}, in which a corrector function is introduced to analytically approximate the stiff component of the solution, thereby alleviating the associated stiffness.
Other approaches have sought to improve the training dynamics of PINNs. Variable-scaling PINNs (VS-PINNs) \cite{vspinn}, for example, employ variable scaling to enhance the resolution of boundary-layer features. Their effectiveness is further supported by Neural Tangent Kernel analysis, which provides a theoretical explanation for the improved training dynamics. Parameter-asymptotic PINNs (PAPINNs) \cite{papinn_onepar} offer another strategy for two-dimensional singularly perturbed problems (SPPs). Rather than relying on randomly initialized parameters, PAPINNs leverage the optimized parameters obtained for a larger perturbation parameter to initialize the network for successively smaller perturbation parameters. This continuation strategy facilitates smoother optimization and improves the resolution of solutions as the perturbation parameter decreases.
Despite these advances, existing approaches have predominantly focused on layer-forming problems governed by a single small perturbation parameter.

In contrast, this work considers the more challenging class of convection–diffusion–reaction problems involving multiple perturbation mechanisms. The interplay between these mechanisms determines whether the problem is convection- or reaction-dominated, often giving rise to sharp boundary layers \cite{dasmehrmann_1D_spp}. Accurately resolving such steep gradients remains a major challenge for PINN-based methods. Standard PINN formulations primarily minimize an $L^2$-type residual, yet a small $L^2$ error does not necessarily imply accurate resolution of the large gradients associated with boundary layers. Gradient-enhanced PINNs (gPINNs) \cite{yu2022gradient} address this limitation by incorporating gradient information into the loss function. Despite their empirical success, however, the theoretical basis for incorporating gradient information into the training objective remains largely unexplored.
Motivated by these observations, we develop a framework for resolving sharp gradients within boundary layers by employing a stronger energy norm and systematically combining gradient-based loss terms with suitable test functions. Beyond introducing a new loss formulation within the neural-network architecture, we establish a theoretical foundation for its effectiveness in problems exhibiting steep gradients. Recent work has established the existence of neural-network approximations for reaction–diffusion boundary-value problems \cite{opschoor2025neural}, while broader theoretical analyses of PINNs as PDE solvers have shown that their accuracy depends critically on the formulation of the loss function \cite{mishra2023estimates}. However, these analyses do not address problems characterized by steep gradients or boundary layers. In contrast, the present work quantifies the energy error of the proposed ENPINN approximation for PDEs with boundary layers, thereby providing a theoretical framework for assessing its accuracy in layer-dominated regimes.

The main contributions of this study are summarized as follows:
\begin{itemize}
\item We introduce ENPINN, an efficient framework that combines gradient-based information with a variational loss formulation to improve predictive accuracy in the energy norm while retaining the interpretability of PINNs.

\item We establish the theoretical role of test functions in controlling the spatial-derivative components of the residual loss and provide a mathematical analysis of two related PINN formulations. The first, termed WLPINN, incorporates a test function into the conventional PINN residual loss. This weighting improves the approximation relative to standard PINNs by scaling the residual near the boundary, thereby facilitating the enforcement of boundary conditions. However, WLPINN does not adequately capture derivative-dependent features of the solution, resulting in larger errors in the energy norm. The gPINN formulation, in contrast, augments the loss with spatial derivatives of the PDE residual. Our analysis shows that incorporating these derivative terms in the interior loss alone is insufficient to accurately resolve the steep gradients present in the underlying solution.
 
\item We strengthen the theoretical stability criterion for the energy-norm error, providing a rigorous foundation for the ENPINN framework. We first establish the existence of an ENPINN architecture capable of achieving arbitrarily small energy error and derive corresponding bounds on the derivatives of the neural-network approximation. These derivative estimates are then used to bound the energy error in terms of the ENPINN loss. Finally, we extend the ENPINN architecture to coupled multiscale problems, broadening its applicability to more complex systems.

\item We conduct a systematic comparison of VSPINN, WLPINN, gPINN, and ENPINN for convection–diffusion–reaction problems with steep gradients, using the energy norm as the primary error metric. The numerical experiments include a three-dimensional problem, a two-dimensional Burgers equation exhibiting an interior layer, and a coupled multiscale system. Across these test cases, the results demonstrate the effectiveness and broader applicability of the proposed ENPINN framework.

\end{itemize}

The paper is organized as follows. Section 2 formulates the generalized convection - diffusion – reaction problems arising in combustion flows and characterizes the corresponding solutions using perturbation theory. Sections 3 and 4 introduce the ENPINN framework and establish its existence and derivative bounds. Section 5 develops the energy-norm formulation for measuring the generalization error. Finally, Section 6 presents a series of numerical experiments that validate the theoretical results and demonstrate the effectiveness of the proposed approach. Section 7 summarizes the main findings of this study.

\section{Mathematical Formulation with Properties of Continuous Solution }\label{sec :Mathematical Formulation with Properties of Continuous Solution}
\subsection{Formulation of Governing Problem}\label{subsec: Formulation of Governing Problem}
The scope of this work is the application of generalized transport problems and its simulation. Mathematical formulation of the transport problems relies on the equations of heat transfer, chemical kinetics, and diffusion with the differential equations, where derivatives are defined in linear form and one of the unknown variables is defined in non-linear form \cite{pao2012nonlinear} as follows:
\begin{equation}\label{eq: gen_eq_heat_transfer}
        \frac{\partial}{\partial t}\left(\sum_{\mathfrak{i}}\mathfrak{A}_{\mathfrak{i}}\mathfrak{H}_{\mathfrak{i}}\right)=-\nabla. \mathfrak{q}+\mathfrak{q}_1,
\end{equation}
where $\mathfrak{A}_{\mathfrak{i}}$ is the concentration of the substances $\mathfrak{i}$, $\mathfrak{H}_{\mathfrak{i}}$ is their enthalpy, $\mathfrak{q}_1$ is the density of the heat sources, $\mathfrak{q}$ is the heat flux. The heat source term $\mathfrak{q}_1$ depends on the exponential form, and we denote it as $f(x,y,u)$. Using Fourier's law, $\mathfrak{q}$ is expressed as 
\begin{equation}\label{eq: val_q_in_heat_app}
        \mathfrak{q}\approx -D \nabla u+ \mathfrak{s}_p \rho \mathfrak{v}u,
\end{equation}
where $u$ is the varying temperature in a medium, $D$ is the coefficient of thermal conductivity, which could be taken as a very small quantity, $\mathfrak{s}_{p}$ is the mean mass heat capacity of the mixture at constant pressure, $\rho$ is the density, and $\mathfrak{v}$ is the flow velocity in the mixture, where we are assuming that the flow velocity of the mixture is very small. Then, the definition of the heat capacity gives
\begin{equation}\label{eq: val_lhs_of_gen_eq_heat_transfer}
    \frac{\partial }{\partial t}\left(\sum_{\mathfrak{i}}\mathfrak{A}_{\mathfrak{i}}\mathfrak{H}_{\mathfrak{i}}\right)=\mathfrak{s}_p \rho \frac{\partial u}{\partial t}.
\end{equation}
Substituting the values from equations \eqref{eq: val_q_in_heat_app} and \eqref{eq: val_lhs_of_gen_eq_heat_transfer} in \eqref{eq: gen_eq_heat_transfer}, we get
\begin{equation*}
    \mathfrak{s}_p \rho \frac{\partial u}{\partial t}= \nabla.\left(-D \nabla u+ \mathfrak{s}_p \rho \mathfrak{v}u\right) + f^*(x,y,t,u).
\end{equation*}  
{The above equation can be rewritten in the following form:}  
    \begin{equation}\label{eq: main_cont_prob_nonlin}
        \begin{cases}
            u_t-\varepsilon \Delta u+\mu {a^*}(x,y). \nabla u= {f^*(x,y,t,u)}, \quad (x,y,t) \in \Omega\equiv \Omega_s \times \Omega_t=(0,1)^2 \times (0,T],\\
            u(x,y,t)=0, \quad (x,y)\in \partial \Omega_s=\overline{\Omega}_s\backslash \Omega_s, ~ t\in [0,T], \\
            u(x,y,0)=\xi(x,y), \quad (x,y)\in \overline{\Omega}_s,
        \end{cases}
    \end{equation}
where $T \in (0,1]$, $\varepsilon$ depends on $D,\mathfrak{s}_p, \rho$ and $\mu$ on $\mathfrak{s}_p, \rho, \mathfrak{v}$, and $0< \varepsilon, \mu \ll 1$. The convection vector is given by ${a^*(x,y)=(a_1^*(x,y), a_2^*(x,y))}$ is a smooth function.
Using the mean value theorem, \eqref{eq: main_cont_prob_nonlin} can be formulated in a linear form
    \begin{equation}\label{eq: main_cont_prob_lin}
        \begin{cases}
            u_t-\varepsilon \Delta u+\mu {a}(x,y). \nabla u +b(x,y)u = f(x,y,t), \quad (x,y,t) \in \Omega=(0,1)^2 \times (0,T],\\
            u(x,y,t)=0, \quad (x,y)\in \partial \Omega_s=\overline{\Omega}_s\backslash \Omega_s, ~ t\in [0,T], \\
            u(x,y,0)=\xi(x,y), \quad (x,y)\in \overline{\Omega}_s,
        \end{cases}
    \end{equation}
where {$a=(a_1,a_2)$ and $ a_i \geq \alpha_0 >0$ for $\alpha=1,2$, $b\geq\beta_0>0$} and {$ {\partial f}/{\partial u} \leq 0$}. 
We define 
\begin{equation}\label{bound_a_b}
    \begin{aligned}
        \alpha=&\displaystyle \max_{(x,y)\in\overline{\Omega}_s}\{a_i(x,y),|a_{i_x}(x,y)|, |a_{i_y}(x,y)|\},\; i=1,2,\quad 
        \displaystyle \beta=\max_{(x,y)\in\overline{\Omega}_s}\{b(x,y),|b_x(x,y)|, |{b_y(x,y)|\}}.
    \end{aligned}
\end{equation} 
Furthermore, we define \(\zeta =\displaystyle \min_{(x,y)\in {\overline{\Omega}_s}} \left\{{b(x,y)}/{2a_1(x,y)},~{b(x,y)}/{2a_2(x,y)}\right\}\). {The underlying problem \eqref{eq: main_cont_prob_lin} exhibits boundary layer phenomena for small values of parameters $\varepsilon$, $\mu$.} The qualitative behavior and regularity properties of the solution depends on the ratio $\mu^2/\varepsilon$. 
Additionally, for $\mathcal{P}\in\mathbb{N}\cup \{0\}$, $\mathcal{Q}\in\mathbb{N}$ we define Sobolev space $W^{\mathcal{P},\mathcal{Q}}(\Omega)$ with the norm 
$$\|\chi\|_{W^{\mathcal{P},\mathcal{Q}}(\Omega)}=\left( \displaystyle \sum_{j=0}^{\mathcal{P}} \|\chi^{(j)}\|_{L^{\mathcal{Q}}(\Omega)}^{\mathcal{Q}}\right)^{\frac{1}{\mathcal{Q}}}, ~1 \leq \mathcal{Q} < \infty, \quad
\|\chi\|_{W^{\mathcal{P},\infty}(\Omega)}=\displaystyle \max_{i=0,1,...,\mathcal{P}} \left(\mathrm{ess ~~ sup}\left|\chi^{(i)}\right|\right).$$ 
The standard Sobolev space is denoted by $H^{\mathcal{P}}(\Omega)=W^{\mathcal{P},2}(\Omega),$ and in particular, $W^{0,0}(\Omega)\equiv L^2(\Omega).$ The constant $C$ denotes a generic positive constant independent of perturbation parameters like $\varepsilon, \mu$.

\subsection{{Nature of the boundary layers
}}\label{subsec:Properties of Exact Solutions}
For the convergence analysis of the proposed approach, we divide the spatial boundary $\partial\Omega_s$ into four parts as follows:
\begin{align*}
   &\Gamma_L=\{(0,y)\in \overline{\Omega}_s:0\leq y \leq 1\}, \quad \Gamma_R=\{(1,y)\in \overline{\Omega}_s:0\leq y \leq 1\}, \\
   &\Gamma_B=\{(x,0)\in \overline{\Omega}_s:0\leq x \leq 1\}, \quad \Gamma_T=\{(x,1)\in \overline{\Omega}_s:0\leq x \leq 1\}.
\end{align*}
We decompose $u$ into a regular component $r$, boundary layer components $s_{L}$, $s_{R}$, $s_{B}$, $s_T$ corresponding to the boundaries $\Gamma_L$, $\Gamma_R$, $\Gamma_B$, $\Gamma_T$ respectively, and corner layer functions $s_{LB}$, $s_{LT}$, $s_{RB}$, $s_{RT}$ related  to the corners $(0,0)$, $(0,1)$, $(1,0)$, $(1,1)$ of the spatial domain respectively. Therefore $$u=r+s_L+s_{R}+s_{B}+s_T+s_{LB}+s_{LT}+s_{RB}+s_{RT}.$$
{Depending on the compatibility conditions on the data, the derivative bounds for $\mu^2 \leq {\zeta \varepsilon}/{\alpha_0}$ of the solution satisfy (see, for details} \cite{o2011parameter})  
\begin{equation}\label{eq: exact_sol_derivative_bound_mu_less_eps}
    \begin{aligned}
        &\left|{\partial^{l_s+l_t}s_B}\right|\leq C \varepsilon^{-\frac{l_y}{2}} e^{-\sqrt{\frac{\alpha_0 \zeta}{\varepsilon}}y}, \hspace{0.85cm}
        \left|{\partial^{l_s+l_t}s_{RB}}\right|\leq C \varepsilon^{-\frac{l_s}{2}} \displaystyle \min \left\{ e^{-\sqrt{\frac{\alpha_0 \zeta}{\varepsilon}}(1-x)}, e^{-\sqrt{\frac{\alpha_0 \zeta}{\varepsilon}}y}\right\},\\
        &\left|{\partial^{l_s+l_t}s_T}\right|\leq C \varepsilon^{-\frac{l_y}{2}} e^{-\sqrt{\frac{\alpha_0 \zeta}{\varepsilon}}(1-y)}, ~~
        \left|{\partial^{l_s+l_t}s_{RT}}\right|\leq C \varepsilon^{-\frac{l_s}{2}} \displaystyle \min \left\{ e^{-\sqrt{\frac{\alpha_0 \zeta}{\varepsilon}}(1-x)}, e^{-\sqrt{\frac{\alpha_0 \zeta}{\varepsilon}}(1-y)}\right\}.\\
    \end{aligned}
\end{equation}
For the other case $\mu^2 \geq \frac{\zeta \varepsilon}{\alpha_0}$, the derivative bounds of the solution components are as follows:
\begin{equation}\label{eq: exact_sol_derivative_bound_mu_greater_eps}
    \begin{aligned}
        &\left|{\partial^{l_s+l_t}s_T}\right|\leq C \left(\frac{\varepsilon}{\mu}\right)^{-{l_y}} e^{-{\frac{\alpha_0 \mu}{2\varepsilon}}(1-y)},~ 
        \left|{\partial^{l_s+l_t}s_{RB}}\right|\leq C \displaystyle \min \left\{ \left(\frac{\varepsilon}{\mu}\right)^{-{l_x}} e^{-{\frac{\alpha_0 \mu}{2\varepsilon}}(1-x)}, \mu^{-{l_y}} e^{-{\frac{\zeta}{2\mu}}y}\right\},\\
        &\left|{\partial^{l_s+l_t}s_B}\right|\leq C \mu^{-{l_y}} e^{-{\frac{\zeta}{2\mu}}y}, ~
        \left|{\partial^{l_s+l_t}s_{RT}}\right|\leq C \displaystyle \min \left\{ \left(\frac{\varepsilon}{\mu}\right)^{-{l_x}} e^{-{\frac{\alpha_0 \mu}{2\varepsilon}}(1-x)}, \left(\frac{\varepsilon}{\mu}\right)^{-{l_y}} e^{-{\frac{\alpha_0 \mu}{2\varepsilon}}(1-y)}\right\}.\\
    \end{aligned}
\end{equation}
Here $l_s=l_x+l_y$, $0\leq l_s+2l_t\leq 4$. {In both cases, $\left|{\partial^{l_s+l_t}r}\right|\leq C$}. {Similarly, derivative bounds for the decomposition components $s_L$, $s_R$, $s_{LT}$ and $s_{LB}$ can be derived in an analogous manner.}

\section{Formulation of ENPINN}\label{sec:Formulation of ENPINN}
\subsection{{Mathematical Architecture of Neural Networks}}\label{subsec: Mathematical Architecture of Neural Networks}
Let $H \in \mathbb{N}$ denote the depth of the NN, $T$ be the size of the NN, and let $d_{h} \in \mathbb{N}$ be the number of neurons in the $ h-$th layer, for $h=0,1,..., H$. The trainable network parameters are collected in $\mathcal{P}=\{(\mathcal{W}^{h},{b}^h): h = 1,...., H\}$, where $\mathcal{W}^{h}\in\mathbb{R}^{d_{h}\times d_{h-1}}$ and ${b}^h\in\mathbb{R}^{d_{h}}$ denote the weight matrix and bias vector of the $h-$th layer, respectively. 
Let ${x} \in \Omega \subseteq \mathbb{R}^{d_0}$ be the input and let $u: \Omega \rightarrow \mathbb{R}$ be the target output, which is interpreted as a function belonging to a suitable Hilbert or Sobolev space equipped with an appropriate norm $\|.\|_{\star}$ or seminorm $|.|_{\star}$ such as the $L^2, H^1$ or $L^{\infty}$ norm. 
A mathematical framework of the feedforward neural network is defined as follows:
\begin{align*}
    {x}^{0}&={x},\\
    {x}^{h}&=\sigma\left({\mathcal{W}}_p^{h}{x}^{h-1}+{b}^{h}\right),\quad \mbox{for } h\in\{1,....,H-1\},\\
    {x}^{H}&={\mathcal{W}}_p^{H}{x}^{H-1},\\
    u_{N,p}(x)&=x^H.
\end{align*}
Here, $\sigma: \mathbb{R} \rightarrow \mathbb{R}$ denotes a nonlinear activation function chosen based on the nature of the target output, and {$p\in \mathcal{P}$} denotes the parameter.

{\subsection{Training Points}\label{subsec: Training Points}
The training set $\mathfrak{T} \subseteq \overline{\Omega}_s  \times [0, T]$  consists of training points, often called collocation points, and will be used to enforce the residual, boundary, and initial conditions. The training set $\mathfrak{T}$ is decomposed into the following components:
\begin{enumerate}
    \item Interior training points are defined by $\mathfrak{T}_{int}$=\{$\mathfrak{R}_{\mathfrak{c}}\}_{\mathfrak{c}=1}^{\mathcal{C}_{int}}$, where each point $\mathfrak{R}_{\mathfrak{c}}=(\mathfrak{C}_x,\mathfrak{C}_y,\mathfrak{C}_t)_{\mathfrak{c}}$ $ \in \Omega_s \times \Omega_t$ is used to minimize the PDE residual.
    \item The spatial boundary training set is defined as $\mathfrak{T}_{spb}$=\{$\mathfrak{S}_{\mathfrak{c}}\}_{\mathfrak{c}=1}^{\mathcal{C}_{spb}}$ with $\mathfrak{S}_{\mathfrak{c}}=(\mathfrak{C}_x,\mathfrak{C}_y,\mathfrak{C}_t)_{\mathfrak{c}} \in \partial\Omega_s \times \overline{\Omega}_t$. These points are employed to enforce the spatial boundary conditions.
    \item Initial condition training points are given by $\mathfrak{T}_{ini}$=\{$\mathfrak{I}_{\mathfrak{c}}\}_{\mathfrak{c}=1}^{\mathcal{C}_{ini}}$ where $\mathfrak{I}_{\mathfrak{c}}=(\mathfrak{C}_x,\mathfrak{C}_y)_{\mathfrak{c}} \in \overline{\Omega}_s$ corresponds to points at the initial time level $t=0$.
\end{enumerate}
  Consequently, the complete training set is expressed as, $\mathfrak{T}= \mathfrak{T}_{int} \cup \mathfrak{T}_{spb} \cup \mathfrak{T}_{ini} $.}

\subsection{Modified Residuals}\label{subsec:Modified Residuals}
Residual terms play a central role in the formulation of PINNs and their variants for approximating solutions of PDEs. In the present work, we define the residuals and their derivatives to construct the ENPINN algorithm for \eqref{eq: main_cont_prob_lin} and corresponding \eqref{eq: main_cont_prob_nonlin}. Let $u_{N,p} $
denotes the NN approximation having parameter $p \in \mathcal{P}$ with activation function Tanh. Then the residuals of the loss function are defined as follows:
    \begin{enumerate}
        \item \textbf{Interior Residual:} The residual at interior points of $\Omega$ is given by
        \begin{equation}\label{interior residual}
            \mathcal{N}_{int,p}(x,y,t)=\mathcal{N}_{res,p}(x,y,t)\times \mathbb{V}(x,y), \quad \mbox{for} ~ (x,y)\in \Omega_s,~ t\in[0,T],
        \end{equation}
        where
        \begin{align*}
            \mathcal{N}_{res,p}(x,y,t)&=\left(\frac{\partial}{\partial t} \left(u_{N,p}\right) -\varepsilon \Delta u_{N,p} + \mu {a}(x,y). \nabla u_{N,p} + b(x,y)u_{N,p}-f(x,y,t)\right),
        \end{align*}
        {$\mathbb{V}\in H_0^1(\Omega)$ for \eqref{eq: main_cont_prob_lin}}.
        The main motivation for introducing the factor $\mathbb V(x,y)$ is that it takes very small values near the boundary, in regions where boundary layers are expected. Hence, the residual in the boundary layers is scaled with a very small number. This approach is in the same spirit as the one used in the stabilization parameter optimization for convection-dominated problems in \cite{JKS11}, where the residual is calculated by neglecting all boundary cells. 
        \item {\textbf{Derivative Interior Residual:} The derivative residuals contributions associated with the interior domain are defined by incorporating the spatial derivatives of the interior residual function. Specifically,
        \begin{equation*}\label{Derivative_x_interior_residual}
            \mathcal{N}_{(int,p)_x}(x,y,t)=\mathcal{N}_{res_x}(x,y,t)\times \mathbb{V}(x,y)+ \mathcal{N}_{res}(x,y,t)\times \mathbb{V}_x(x,y), \, \mbox{for} ~ (x,y,t)\in \Omega,
        \end{equation*}
        \begin{equation*}\label{Derivative_y_interior_residual}
            \mathcal{N}_{(int,p)_y}(x,y,t)=\mathcal{N}_{res_y}(x,y,t)\times \mathbb{V}(x,y)+ \mathcal{N}_{res}(x,y,t)\times \mathbb{V}_y(x,y), \, \mbox{for} ~ (x,y,t)\in \Omega,
        \end{equation*}}
        where $\mathcal{N}_{(int,p)_x},~ \mathcal{N}_{(int,p)_y}$ are the derivatives of $\mathcal{N}_{(int,p)}$ with respect to $x$ and $y$ respectively.
        
        \item \textbf{Spatial Boundary Residual:} The residual corresponding to the spatial boundary condition on $\partial \Omega_s$ is expressed as
        \begin{equation}\label{spatial bounday residual}
            \mathcal{N}_{spb,p}(x,y,t)=u_{N,p}(x,y,t), \quad \mbox{for} ~ (x,y)\in \partial\Omega_s,~ t\in[0,T].
        \end{equation}
        \item \textbf{Initial condition residual:} The residual corresponding to the initial condition at $t=0$ is represented by
        \begin{equation}\label{Initial condition residual}
            \mathcal{N}_{ini,p}(x,y)=u_{N,p}(x,y,0)-\xi(x,y), \quad \mbox{for} ~ (x,y)\in \overline{\Omega}_s.
        \end{equation}
    \end{enumerate}

\subsection{Quadrature Rules}\label{subsec: Quadrature rules}
Let $\Omega^* \subseteq \mathbb{R}^d,\, d=2,3$ and $x^* \in \Omega^*$.
We adopt a general quadrature rule for a function $\chi: \Omega^* \rightarrow $ $\mathbb{R}$ depending on the regularity. The quadrature approximation is given by
\begin{equation}\label{eq: quadrature err approx}
    \begin{aligned}
        \int_{\Omega^*} \chi (x^*) \, d\Omega^* &\approx \displaystyle \sum_{i=1}^{\mathcal{C}^*} w_i \chi (x_i^*),
    \end{aligned}
\end{equation}
where $\{(x_i^*)\}_{i=1}^{\mathcal{C}^*} \subseteq \Omega^*, $ are the quadrature nodes, and $w_i \in \mathbb{R}^+$ is the corresponding suitable quadrature weights. It is important to note that the weights used in ENPINN are not necessarily identical to quadrature weights, since the former may depend on perturbation parameters such as $\varepsilon$ or $\mu$. In general, we assume the quadrature errors satisfy the bounds
\begin{equation*}
    \begin{aligned}
        \left|\int_{\Omega^*} \chi (x^*) \, d\Omega^* - \displaystyle \sum_{i=1}^{\mathcal{C}^*} w_i \chi (x_i^*)\right| &\leq \mathcal{K}_{quer} {(\mathcal{C}^*)}^{-\alpha_{quer}},
    \end{aligned}
\end{equation*}
where $\mathcal{K}_{quer}$ denotes a positive constant that depends on the number of points that are used to approximate the integrand, $\alpha_{quer} >0$ denotes the convergence rate. {Analogously, for time-dependent problems, we shall consider $\Omega^* \subseteq \mathbb{R}^2 \cup \mathbb{R}^+$ where $t^*$ is considered as an additional coordinate in $\Omega^*$.}
For instance, Monte Carlo quadrature approximates the integral by treating the quadrature points as independent and identically distributed, and the quadrature error is bounded by $C \mathcal{C}^{-\alpha_{quer}}$, where $C$ is a constant independent of inverse power of {$\varepsilon, \mu$}. 
Let $\mathscr{X}: \mathbb{R}^2 \times \mathbb{R}^+ \rightarrow \mathbb{R}$ be the random variable, $\Omega_{qu}$ be the compact {subset of} $\mathbb{R}^2 \times \mathbb{R}^+$ and $\mathscr{F}$ be a continuous function on $\Omega_{qu}$. From \cite{haber1966modified}, we assume that the random variable is $\mathscr{X}= |\Omega_{qu}| \mathscr{F}(x,y,t) $. 
Then the expectation of $\mathscr{X}$ is defined as follows
\begin{equation*}
    E(\mathscr{X}) = \dfrac{\int_{\Omega_{qu}}|\Omega_{qu}| \mathscr{F}\, d\Omega_{qu}}{|\Omega_{qu}|} = \int_{\Omega_{qu}} \mathscr{F}(x,y,t)\, d\Omega_{qu} := \mathscr{I} \mbox{(say)}.
\end{equation*}
Assume $\mathscr{I_A}$ is the approximation of $\mathscr{I}$ of the form \eqref{eq: quadrature err approx}. Define
\begin{equation*}
    \varepsilon_{quad}^{\mathscr{I}} = \mathscr{I}- \mathscr{I}_{\mathscr{A}},
\end{equation*}
where $\varepsilon_{quad}^{\mathscr{I}}$ denotes the error. The central limit theorem guarantees that
\begin{equation}\label{eq: quadrature error monte carlo expression}
    \varepsilon_{quad}^{\mathscr{I}} = |\Omega_{qu}| \mathfrak{s} \mathcal{C}^{-\alpha_{quer}},
\end{equation}
where $\mathfrak{s}^2 = E[\mathscr{X}^2]- [E(\mathscr{X})]^2$.

\subsection{Loss Function}\label{subsec: Loss Function}
{To train ENPINN, {we require a boundary fitting loss function that predicts the layer effectively.} The loss function used in the proposed framework is defined as
\begin{equation}\label{eq: enpinn_loss}
    \begin{aligned}
        \mathcal{L}_{en}^C(p) = &\lambda_{int}\int_{\Omega_s \times [0,T]} \mathcal{N}_{int,p}(x,y,t)\, dx \,dy\, dt + \lambda_{int} \int_{\Omega_s \times [0,T]} \mathcal{N}_{(int,p)_x}(x,y,t)\, dx \,dy\, dt \\
        &+ \lambda_{int}\int_{\Omega_s \times [0,T]} \mathcal{N}_{(int,p)y}(x,y,t)\, dx \,dy\, dt + \lambda_{spb} \int_{\partial\Omega_s \times [0,T]} \mathcal{N}_{spb,p}(x,y,t)\, dx \,dy\, dt\\
        &+ \lambda_{ini} \int_{\bar{\Omega}_s} \mathcal{N}_{ini,p}(x,y)\, dx \,dy,
    \end{aligned}
\end{equation}
where $\lambda_{int}, \lambda_{spb}, \lambda_{ini}$ are the penalty coefficients associated with the corresponding loss functions.
Now, using the training points and the quadrature rule defined in subsection \ref{subsec: Quadrature rules}, we employ the following loss function in the ENPINN algorithm at the discrete collocation points,
\begin{equation}\label{loss_function}
    \begin{aligned}
        \mathcal{L}_{en}^D(p)=& \displaystyle {\lambda_{ini}}\sum_{n=1}^{\mathcal{C}_{ini}}w_n^{ini}|\mathcal{N}_{ini,p}(x_s^n)|^2+ {\lambda_{spb}}\sum_{n=1}^{\mathcal{C}_{spb}}w_n^{spb}|\mathcal{N}_{spb,p}(x_{st}^n)|^2
        +{\lambda_{int}}\sum_{n=1}^{\mathcal{C}_{int}}w_n^{int}|\mathcal{N}_{int,p}(x_{st}^n)|^2\\
        &+ {\lambda_{int}} \sum_{n=1}^{\mathcal{C}_{int}}w_n^{int^x}|\mathcal{N}_{(int,p)_x}(x_{st}^n)|^2 + {\lambda_{int}}\sum_{n=1}^{\mathcal{C}_{int}}w_n^{int^y}|\mathcal{N}_{(int,p)_y}(x_{st}^n)|^2,
    \end{aligned}
\end{equation}}
where $x_s^n$ is a point in $(x,y)$ plane and $x_{st}^n$ is a point in $(x,y,t)$ space.
Here, all the residual terms are defined in Subsection \ref{subsec:Modified Residuals}. The terminologies $w_{n}^{ini}$,$w_{n}^{spb}$, and $w_{n}^{int}$ denote the quadrature weights corresponding to the initial condition, spatial boundary, and interior training points, respectively. Similarly, $w_{n}^{int_x}$ and $w_{n}^{int_y}$ are quadrature weights associated with the interior training points corresponding to the derivatives of the interior residual in the $x$ and $y$ directions.

    The loss function for WLPINN is defined by excluding the residual derivative loss components, and it is as follows:
    \begin{equation}\label{eq: WLPINN_loss}
        \begin{aligned}
            \mathcal{L}_{wl}^C(p) = &\lambda_{int}\int_{\Omega_s \times [0,T]} \mathcal{N}_{int,p}(x,y,t)\, dx \,dy\, dt + \lambda_{spb} \int_{\partial\Omega_s \times [0,T]} \mathcal{N}_{spb,p}(x,y,t)\, dx \,dy\, dt \\
            &\quad+ \lambda_{ini} \int_{\bar{\Omega}_s} \mathcal{N}_{ini,p}(x,y)\, dx \,dy,
        \end{aligned}
    \end{equation}
    Furthermore, the loss function for {gPINN} is defined by excluding the test function from the residual derivative terms, and it is given by:
    \begin{equation}\label{eq: WLDPINN_loss}
    \begin{aligned}
        \mathcal{L}_{wld}^C(p) = &\lambda_{int}\int_{\Omega_s \times [0,T]} \mathcal{N}_{int,p}(x,y,t)\, dx \,dy\, dt + \lambda_{int} \int_{\Omega_s \times [0,T]} \mathcal{N}_{(res,p)_x}(x,y,t)\, dx \,dy\, dt \\
        &+ \lambda_{int}\int_{\Omega_s \times [0,T]} \mathcal{N}_{(res,p)y}(x,y,t)\, dx \,dy\, dt + \lambda_{spb} \int_{\partial\Omega_s \times [0,T]} \mathcal{N}_{spb,p}(x,y,t)\, dx \,dy\, dt\\
        &+ \lambda_{ini} \int_{\bar{\Omega}_s} \mathcal{N}_{ini,p}(x,y)\, dx \,dy.
    \end{aligned}
\end{equation}
\begin{note}
        The derivative residuals $\mathcal{N}_{(int,p)_x}$, $\mathcal{N}_{(int,p)_y}$ of $\mathcal{N}_{int,p}$ play a vital role in establishing the convergence of ENPINN in the energy norm. {The incorporation of the derivative of weighted residual is crucial for capturing large gradients near the boundary.} This assertion is substantiated in the numerical section through a comparison of ENPINN with WLPINN (without residual derivative) and {gPINN} (without the test function in the residual derivative).
\end{note}

\begin{figure}
    \centering
    \resizebox{\textwidth}{!}{
    \tikzset{
        box/.style={
            rectangle,
            rounded corners=3pt,
            draw=black,
            minimum width=4.2cm,
            minimum height=0.5cm,
            align=center,
            font=\small,
            line width=0.7pt
        },
        input/.style={box, fill=blue!5},
        process/.style={box, fill=cyan!5},
        residual/.style={box, fill=red!5, draw=red!70!black},
        loss/.style={box, fill=green!5, draw=green!60!black},
        arrow/.style={->, thick, >=stealth}
    }

    \begin{tikzpicture}[node distance=1.6cm and 2.2cm, transform shape]

        \node[input] (colloc)
        {Collocation Points\\
        $(x,y,t)\in \Omega_s \times (0,T]$};

        \node[process, below= 0.5 cm of colloc] (pde_res)
        {PDE Residual\\
        $\mathcal{N}_{res} = Lu-f$};

        \node[process, left=3.8cm of pde_res] (test)
        {Test Function\\
        $v(x,y)$};

        \node[residual, below left=0.8cm and 0.4cm of pde_res] (weighted_res)
        {Weighted Residual\\
        $\mathcal{N}_{res}*v$};

        \node[residual, below right=0.8cm and 0.4cm of pde_res] (deriv_res)
        {Derivative Residual\\
        $(\mathcal{N}_{res})_x+(\mathcal{N}_{res})_y$};

        \node[residual, below=2.3cm of pde_res, fill=orange!5, draw=orange!70!black]
        (weighted_deriv_res)
        {Weighted Derivative Residual\\
        $(\mathcal{N}_{res}*v)_x+(\mathcal{N}_{res}*v)_y$};

        \node[loss, below=2cm of weighted_deriv_res, minimum width=7cm]
        (ENPINN_int_loss)
        {ENPINN Interior Loss\\
        $\mathcal{L}_{en}^c
        =
        (\mathcal{N}_{res}*v)_x+(\mathcal{N}_{res}*v)_y+\mathcal{N}_{res}*v$};

        \node[loss, below left=0.6cm and 0.3cm of weighted_res, minimum width=5cm]
        (WLPINN_int_loss)
        {WLPINN Interior Loss\\
        $\mathcal{L}_{wl}^c
        =\mathcal{N}_{res}*v$};

        \node[loss, below right=0.4cm and 0.4cm of deriv_res, minimum width=7cm]
        (WLDPINN_int_loss)
        {gPINN Interior Loss\\
        $\mathcal{L}_{wld}^c
        =
        (\mathcal{N}_{res})_x+(\mathcal{N}_{res})_y+\mathcal{N}_{res}
        $};

        \draw[arrow] (colloc) -- (pde_res);
        
        \draw[arrow] (pde_res) -- (weighted_res);
        \draw[arrow] (pde_res) -- (deriv_res);

        \draw[arrow] (pde_res) -| (WLDPINN_int_loss);
        \draw[arrow] (deriv_res) -- (WLDPINN_int_loss);

        \draw[arrow] (test) -- (weighted_res);

        \draw[arrow] (weighted_res) -- (weighted_deriv_res);
        \draw[arrow] (weighted_res) -- (WLPINN_int_loss);

        \draw[arrow] (weighted_res) |- (ENPINN_int_loss);
        \draw[arrow] (deriv_res) |- (ENPINN_int_loss);
        \draw[arrow] (weighted_deriv_res) -- (ENPINN_int_loss);
    \end{tikzpicture}}
    \caption{\scriptsize{{Schematic illustration of the pathway leading to ENPINN interior loss. A boundary-fitting test function $v$ with small scaling of the loss function near the boundary is employed. The key idea of the ENPINN loss is to emphasize not only the residual itself but also its spatial derivatives.}}}
    \label{fig: pathway to enpinn}
\end{figure}

\subsection{Algorithm for ENPINN}\label{subsec: Algorithm for ENPINN}
The main objective of the ENPINN is to find the optimal network parameter $p^*$ such that the traditional loss, including the residual derivative, is minimized, {}{i.e.,} find $p^* \in \mathcal{P}$ defined as $p^*= \displaystyle\arg\min_{p\in \mathcal{P}}\mathcal{L}_{en}^D$.
{A schematic development of the ENPINN loss corresponding to \eqref{eq: main_cont_prob_lin} is provided in Figure \ref{fig: pathway to enpinn}.
Here $u_{N,p^*}(x,y,t;p^*)$ denotes the optimal NN approximation obtained using the ENPINN framework. The loss function $\mathcal{L}_{en}^D(x,y,t;p)$ of the ENPINN comprises the interior, spatial boundary, and initial residual, along with derivative terms of the interior residual. Notably, the interior residual is multiplied by a function $\mathbb{V} \in H^1(\Omega).$ To enhance computational efficiency, an adaptive learning rate is employed. The Adam optimizer is utilized to update the ENPINN parameters. The algorithm {}{update the learning parameters} until either the ENPINN reaches the maximum number of epochs or the loss function falls below a prescribed tolerance.}

\section{Bounds of ENPINN}\label{sec: Existence of ENPINN}
{}{The purpose of this section is to show the derivative bounds of ENPINN approximation for \eqref{eq: main_cont_prob_lin} analogous to \eqref{eq: exact_sol_derivative_bound_mu_less_eps}-\eqref{eq: exact_sol_derivative_bound_mu_greater_eps}.}
Such estimate bounds are fundamental in the derivation of the $L^2$ and energy error established in the subsequent section. For the sake of simplicity, we derive the derivative bounds for an NN corresponding to the following boundary value problem having boundary layers:
\begin{equation}\label{eq: ode_in_existence}
    \begin{aligned}
        -\varepsilon u''(x) &+ a \mu u'(x) + bu=0, \quad x \in(0,1)\\
        &u(0)=\mathbb{U}_0, ~ u(1)=\mathbb{U}_1,
    \end{aligned}
\end{equation}
where $a>0$, $b>0$ are constants.
\subsection{Existence of ENPINN in multi parameter boundary value  problems}\label{subsec:Existence of ENPINN}
{In the following proofs, we use several neural networks defined below.} {The exponential function $\exp{(-)}$ and identity functions are approximated by the neural networks $u_{\mathfrak{J}, N}^{\exp}$ and $u_{h-1,\mathfrak{K}, T, N}^{id}$ with tolerances $\mathfrak{J}$ and $\mathfrak{K}$ respectively (\cite{opschoor2025neural}, Lemma $7.2$ and Lemma $7.9$). {}{Both neural network constructions use the Tanh activation function}. Here $N$ denotes for neural network representation, and the parameters $T$, $h$ are defined in Subsection \ref{subsec: Mathematical Architecture of Neural Networks}.}

\begin{theorem}\label{th: existence_enpinn}
    {Let $\overline{\Omega} = [0,1]$. For a fixed depth $h$ and every $\mathfrak{B}>0$, there exists $\hat{p}\in \mathcal{P}$ such that $\mathcal{L}_{en}^C(\hat{p})< \mathfrak{B}$, where the corresponding neural network $u_{N,\hat{p}}$ approximates the solution of \eqref{eq: ode_in_existence}.}
    \begin{proof}
        See in \ref{sec: appen_Bounds of ENPINN}.
    \end{proof}
\end{theorem}
\begin{remark}
    The parameter $p^*$ and the parameter $\hat{p}$ obtained from Theorem \ref{th: existence_enpinn} are generally not the same. The difference between $\mathcal{L}_{en}^C(\hat{p})$  and $\mathcal{L}_{en}^C({p}^*)$ is often termed the optimization error in neural network prediction \cite{de2024error}. As $\mathfrak{B}$ decreases, the value of $\mathcal{L}_{en}^C(\hat{p})$ becomes closer to $\mathcal{L}_{en}^C(p^*)$. Consequently, the optimization error depends implicitly on $\mathfrak{B}$; a smaller value of $\mathfrak{B}$ yields a lower optimization error. 
\end{remark}

\subsection{Derivative Bounds of ENPINN based Approximation}\label{subsec: Bounds of Neural Network Solution}
In the estimation of the generalization error for ENPINN, we require the derivative bounds for both the exact solution and the ENPINN approximation. In the following theorem, we demonstrate that the derivative bounds of ENPINN closely resemble those of the exact solution with respect to the neural network learning parameter $\hat{p}$ used in establishing the existence of ENPINN.
\begin{theorem}\label{th: deriv_bounds_enpinn}
    Let $u_{N,\hat{p}}^L$ be the approximation of the layer decomposition $s_L$ of $u$ {for $\mu^2 \leq ({4b}\varepsilon/{a^2})$}. Then
    \begin{equation*}
        |\partial ^{l_x} u_{N,\hat{p}}^L| \leq C \varepsilon ^{-\frac{l_x}{2}} \exp{\left(-\sqrt{\frac{b}{\varepsilon}}x\right)}. 
    \end{equation*}
    \begin{proof}
        Details are given in \ref{sec: appen_Bounds of ENPINN}.
    \end{proof}
\end{theorem}

\begin{remark}\label{remark: deriv_bounds_enpinn}
    {}{The derivative bounds in Theorem \ref{th: deriv_bounds_enpinn} are derived for \eqref{eq: ode_in_existence} under the condition $\mu^2 \leq ({4b}\varepsilon/{a^2})$. Similar estimates can be obtained for the case $\mu^2 \geq ({4b}\varepsilon/{a^2})$. Furthermore, these results can be extended to higher dimensional problems by employing Alternating Direction Implicit (ADI) splitting method whose derivative bounds satisfy estimates analogous to the exact solution bounds given in \eqref{eq: exact_sol_derivative_bound_mu_less_eps} and \eqref{eq: exact_sol_derivative_bound_mu_greater_eps}.}
\end{remark}

\section{Error Estimation and its Convergence}\label{sec: Error Estimation and Convergence Analysis}
In this section, we derive the generalization error in the energy norm in terms of the training error. After training the ENPINN using the loss function in \eqref{loss_function}, we obtain the optimal parameter set {}{${p}^*$}. Evaluating the final loss at {}{${p}^*$} yields the training error. Hence, the training error $\mathcal{E}_{Tr}$ can be directly computed from the value of the loss function. First, we define the training errors based on $L^2$ norm $\mathcal{E}_{Tr}^{L^2}$ and energy norm $\mathcal{E}_{Tr}^{en}$ as
\begin{equation}\label{eq: training error l2}
    \begin{aligned}
        \left(\mathcal{E}_{Tr}^{L^2}\right)^2=& \displaystyle \underbrace{\lambda_{ini}\sum_{n=1}^{\mathcal{C}_{ini}}w_n^{ini}|\mathcal{N}_{ini,{{p}^*}}(x_s^n)|^2}_{\mathcal{E}_{ini}^2}+ \underbrace{\lambda_{spb}\sum_{n=1}^{\mathcal{C}_{spb}}w_n^{spb}|\mathcal{N}_{spb,{{p}^*}}(x_{st}^n)|^2}_{\mathcal{E}_{spb}^2} +\underbrace{\lambda_{int}\sum_{n=1}^{\mathcal{C}_{int}}w_n^{int}|\mathcal{N}_{int,{{p}^*}}(x_{st}^n)|^2}_{\mathcal{E}_{int}^2},
    \end{aligned}
\end{equation}
\begin{equation}\label{eq: training error energy}
    \begin{aligned}
        \left(\mathcal{E}_{Tr}^{en}\right)^2=        \left(\mathcal{E}_{Tr}^{L^2}\right)^2+ \underbrace{\lambda_{int}\sum_{n=1}^{\mathcal{C}_{int}}w_n^{int^x}|\mathcal{N}_{(int,{{p}^*})_x}(x_{st}^n)|^2}_{(\mathcal{E}_{int}^x)^2}+ \underbrace{\lambda_{int}\sum_{n=1}^{\mathcal{C}_{int}}w_n^{int^y}|\mathcal{N}_{(int,{{p}^*})_y}(x_{st}^n)|^2}_{(\mathcal{E}_{int}^y)^2}.
    \end{aligned}
\end{equation}
\subsection{Energy Norm}
The energy norm is defined as follows:
\begin{equation}\label{energy norm}
    \|v\|_E^2=\varepsilon |v|^2_{H^1(\Omega)}+\|v\|^2_{L^2(\Omega)} \quad \forall v \in H^1(\Omega).
\end{equation}
Here, we first derive the generalization error in the $L^2$ norm and subsequently extend the result for the energy norm. To initiate this analysis, we define the generalization error in $L^2$ norm as:
\begin{equation}\label{eq: l2 error}
    \mathcal{E}^{L^2}_G:=\left(\int_0^T \int_{\Omega_s}|u(x,y,t)-u_{N,{{p}^*}}(x,y,t)|^2\, dxdy dt\right)^{\frac{1}{2}}.
\end{equation}

\subsection{Error Estimate in $L^2$ norm}
We begin by deriving an estimate for the generalization error of the NN model in the $L^2$ norm, based on its approximation to the solution of \eqref{eq: main_cont_prob_lin}.

\begin{theorem}\label{th:gronwall_genearlization_err_l^2_norm}
    Let {$u \in {H^2(\Omega) \cap C^0(\overline{\Omega})}$} denote the exact solution of \eqref{eq: main_cont_prob_lin} for {$\mu^2 \geq \zeta \varepsilon/\alpha_0$}. Let $u_{N,p^*}$ be the approximation of u obtained using the 
    loss function $\mathcal{L}_{N}^C (p)$  in \eqref{loss_function}. 
    Then the generalization error satisfies the following inequality
    \begin{equation}\label{eq: gronwall_genearlization_err_l^2_norm}
        \|\mathfrak{e}\|_{{L^2(\Omega_s)}}^2= \|u-u_{N,{{p}^*}}\|_{{L^2(\Omega_s)}}^2 \leq e^{\kappa_{L_2}^l}\times \mathcal{G}_{L_2}^l,
    \end{equation}
    where $\kappa_{L_2}={(2\alpha + 1)T}$, $\mathcal{G}_{L_2} = \|\mathcal{N}_{ini,{{p}^*}}\|_{L^2(\Omega_s)}^2 + T^{1/2} \|\mathcal{N}_{spb,{{p}^*}}\|_{L^2(\Omega)}  \mathcal{K}_{\partial \Omega_s}^{\nabla \mathfrak{e}} + 2\alpha \|\mathcal{N}_{spb,{{p}^*}}\|^2_{L^2(\Omega)} + \|\mathcal{N}_{int,{{p}^*}}\|_{L^2(\Omega)}^2$. The positive constant $\mathcal{K}_{\partial\Omega_s}^{\nabla \mathfrak{e}}$ is independent of $\varepsilon$ and depends on the continuous solution $u$, the NN approximation $u_{N,{{p}^*}}$, $i.e.,$ 
    $$\mathcal{K}_{\partial\Omega_s}^{\nabla \mathfrak{e}}\equiv \mathcal{K}_{\partial\Omega_s}^{\nabla \mathfrak{e}}(u,u_{N,{{p}^*}}).$$
    \begin{proof}
        The details of the proof are given in \ref{sec: appen_Error Estimation}.
    \end{proof}
\end{theorem}

\begin{remark}\label{remark: err_less_mu_less_eps_than_mu_greater_eps_l2}
    Theorem \ref{th:gronwall_genearlization_err_l^2_norm} is valid for both $\mu^2 \leq \zeta \varepsilon/\alpha_0$ and $\mu^2 \geq \zeta \varepsilon/\alpha_0$.  For the case  $\mu^2 \geq \zeta \varepsilon/\alpha_0$, the exponential term contains the factor $(2\alpha+1)$, and an additional spatial boundary loss term appears. In contrast, for $\mu^2 \leq \zeta \varepsilon/\alpha_0$, such a rearrangement of the convection term is unnecessary. Consequently, the exponential weight contains only $(\alpha+1)$, and no additional spatial boundary loss component arises.
\end{remark}

\begin{theorem}\label{th:generalization_err_l^2_norm}
    Let $u_{N,{{p}^*}}$ be the approximated solution of \eqref{eq: main_cont_prob_lin} obtained by Theorem \ref{th:gronwall_genearlization_err_l^2_norm}.  
    Then the generalization error measured in $L^2$ norm is given by
    \begin{equation}\label{generalization_err_l^2_norm}
        \begin{aligned}
            \|u-u_{N,{{p}^*}}\|_{L^2(\Omega)}^2
            &\leq \mathcal{K}_{L_G}^2\left(\varepsilon_{ini}^2+\mathcal{K}_{ini}\mathcal{C}_{ini}^{-O_{ini}}+ \mathcal{K}_{\partial\Omega_s}^{\nabla \mathfrak{e}, s_0}{T^{\frac{1}{2}}}\varepsilon_{spb}+ \mathcal{K}_{\partial \Omega_s}^{\nabla \mathfrak{e},s_0}{T^{\frac{1}{2}}}\mathcal{K}_{spb}^{\frac{1}{2}}\mathcal{C}_{spb}^{-\frac{O_{spb}}{2}}\right)\\
            & \quad + \mathcal{K}_{L_G}^2\left(\varepsilon_{int}^2+ \mathcal{K}_{int}\mathcal{C}_{int}^{-O_{int}}+2\alpha\varepsilon_{spb}^2+ 2\alpha\mathcal{K}_{spb}\mathcal{C}_{spb}^{-O_{spb}}\right)
        \end{aligned}
    \end{equation}
    where $\mathcal{K}_{L_G}=\sqrt{Te^{(2\alpha + {1})T}}$, and $\mathcal{K}_{\partial\Omega_s}^{\nabla \mathfrak{e}}\equiv \mathcal{K}_{\partial\Omega_s}^{\nabla \mathfrak{e}}(u,u_{N,{}{{p}^*}})$ is {the quadrature approximation $\mathcal{K}_{\partial\Omega_s}^{\nabla \mathfrak{e}, l}$ or $\mathcal{K}_{\partial\Omega_s}^{\nabla \mathfrak{e}, g}$}. {The constant $s_0$ are independent of $\varepsilon,\,\mu$ but depend on the ratio ${\mu^2}/{\varepsilon}$}. Here $\varepsilon_{ini},\varepsilon_{spb},\varepsilon_{int}$ denote the training errors of the initial condition, spatial boundary, interior domain, respectively, and $\mathcal{K}_{ini}\mathcal{C}_{ini}^{-O_{ini}}$, $\mathcal{K}_{spb}\mathcal{C}_{spb}^{-O_{spb}}$, $\mathcal{K}_{int}\mathcal{C}_{int}^{-O_{int}}$ are the corresponding quadrature error, respectively.
    \begin{proof}
        The proof follows directly from Theorem \ref{th:gronwall_genearlization_err_l^2_norm} for both the cases $\mu^2 \leq {\zeta\varepsilon}/{\alpha_0}$ and $\mu^2 \geq {\zeta\varepsilon}/{\alpha_0}$, obtained by integrating over $\overline{T}\in [0, T]$. 
    \end{proof}
\end{theorem}
{
    \begin{note}
        Based on \eqref{eq: quadrature error monte carlo expression}, we can get the quadrature error for $\mathcal{N}_{int}$ as follows 
        \begin{equation*}
            \mathcal{K}_{int}\mathcal{C}_{int}^{-O_{int}} = |\Omega| \mathfrak{s} \mathcal{C}^{-O_{int}},
        \end{equation*}
        where $\mathfrak{s}$ is independent of the perturbation parameters. Consequently, $\mathcal{K}_{spb}$, $\mathcal{K}_{ini}$ can be computed.
     \end{note}}

\begin{remark}
    A careful inspection of \eqref{generalization_err_l^2_norm} reveals that the generalization error remains small under the following circumstances:\\
    $i)$ The training errors are sufficiently small. 
    Their magnitude can also be evaluated {a posteriori}.\\
    $ii)$ The quadrature errors are negligible. Their size depends on the number of training points and on the quadrature constants. In particular, for a sufficiently large number of training points, one can ensure that $\mathcal{K}_{quer} \mathcal{C}^{-\alpha_{quer}}\ll1.$\\
\end{remark}

\subsection{Error Estimate in Energy Norm}
Using the generalization error estimate in the $L^2$ norm, we now derive a corresponding error bound in the energy norm, which is defined as follows:
\begin{equation}\label{eq: energy error}
    \mathcal{E}^{en}_G:=\mathcal{E}^{L^2}_G+ \varepsilon \left(\int_0^T \int_{\Omega_s}|\nabla (u-u_{N,{}{{p}^*}})(x,y,t)|^2\, dxdy dt\right)^{\frac{1}{2}}.
\end{equation}

\begin{theorem}\label{th:gronwall_genearlization_err_energy_norm}
     Let $u\in {H^3(\Omega) \cap C^0(\overline{\Omega})}$ satisfies \eqref{eq: main_cont_prob_lin} {for $\mu^2 \geq \zeta \varepsilon/\alpha_0$} and $u_{N,{{p}^*}}$ be ENPINN approximation of u obtained through ENPINN. Then the energy norm-based error will satisfy the following inequality:
    \begin{equation}\label{gronwall_genearlization_err_energy_norm}
        \|\mathfrak{e}\|_{L^2(\Omega)}^2+ \varepsilon\|\nabla \mathfrak{e}\|_{L^2(\Omega)}^2
        \leq \mathcal{G}_{L_2} e^{\kappa_{L_2}}+\mathcal{G}_{en} e^{\kappa_{en}},
    \end{equation}
     where $T_1 \leq T, \text{and }T \in (0,1]$, $$\kappa_{en}= \left(\frac{ 6  \alpha+3\beta +2}{T_1}\right)T, ~~ \mathcal{G}_{en}= \|\mathcal{N}_{int_x,{{p}^*}}^{\varepsilon}(x,y,t)\|^2_{L^2(\Omega)}+ \|\mathcal{N}_{int_y,{{p}^*}}^{\varepsilon}(x,y,t)\|^2_{L^2(\Omega)}+ 2 \beta\mathcal{E}_{L_2}^{G}$$ and the quantities $\mathcal{G}_{L_2}, e^{\kappa_{L_2}}$ are defined in Theorem \ref{th:gronwall_genearlization_err_l^2_norm}.
\end{theorem}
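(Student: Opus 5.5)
The approach is an energy argument for the derivatives of the error $\mathfrak{e}=u-u_{N,p^*}$, which we then combine with the $L^2$ estimate of Theorem \ref{th:gronwall_genearlization_err_l^2_norm}.

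Subtracting the equation for $u$ from the definition of $\mathcal{N}_{res,p^*}$ shows that $\mathfrak{e}$ satisfies
\begin{equation*}
\mathfrak{e}_t-\varepsilon\Delta\mathfrak{e}+\mu a\cdot\nabla\mathfrak{e}+b\,\mathfrak{e}=-\mathcal{N}_{res,p^*} \quad \text{in } \Omega,
\end{equation*}
with boundary data $\mathfrak{e}=-\mathcal{N}_{spb,p^*}$ and initial data $\mathfrak{e}(\cdot,0)=-\mathcal{N}_{ini,p^*}$. Since $u\in H^3$, we may differentiate this equation in $x$ and in $y$. For $\mathfrak{e}_x$ this gives
\begin{equation*}
(\mathfrak{e}_x)_t-\varepsilon\Delta\mathfrak{e}_x+\mu a\cdot\nabla\mathfrak{e}_x+b\,\mathfrak{e}_x=-(\mathcal{N}_{res,p^*})_x-\mu a_x\cdot\nabla\mathfrak{e}-b_x\mathfrak{e},
\end{equation*}
and analogously for $\mathfrak{e}_y$. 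The right-hand side is expressed through the weighted derivative residuals $\mathcal{N}_{(int,p^*)_x}$ and $\mathcal{N}_{(int,p^*)_y}$ of Subsection \ref{subsec:Modified Residuals}. I read the quantities $\mathcal{N}^{\varepsilon}_{int_x,p^*}$ and $\mathcal{N}^{\varepsilon}_{int_y,p^*}$ in $\mathcal{G}_{en}$ as these residuals after scaling by $\sqrt{\varepsilon}$.

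Next we multiply the $\mathfrak{e}_x$-equation by $\varepsilon\,\mathfrak{e}_x$ (and likewise for $y$), integrate over $\Omega_s$, and add the two results. This produces $\tfrac{\varepsilon}{2}\tfrac{d}{dt}\|\nabla\mathfrak{e}\|^2$ on the left, together with the nonnegative terms $\varepsilon^2\|\nabla^2\mathfrak{e}\|^2$ and $\varepsilon\beta_0\|\nabla\mathfrak{e}\|^2$, which can be dropped. The remaining terms are handled as follows.
\begin{itemize}
\item For the convection term, we integrate by parts using $\mu\le 1$, giving $\tfrac{\mu}{2}\int(\nabla\cdot a)|\nabla\mathfrak{e}|^2$ plus a boundary term. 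This is bounded by $\alpha\,\varepsilon\|\nabla\mathfrak{e}\|^2$ up to a constant.
\item The lower-order terms $\mu a_x\cdot\nabla\mathfrak{e}$ and $b_x\mathfrak{e}$ are handled with Young's inequality. They give contributions of size $(\alpha+\beta)\varepsilon\|\nabla\mathfrak{e}\|^2$ and $\beta\|\mathfrak{e}\|^2$.
\item The residual term is bounded by Young's inequality as $\|\mathcal{N}^{\varepsilon}_{int_x}\|^2+\varepsilon\|\mathfrak{e}_x\|^2$.
\end{itemize}
Collecting these gives a differential inequality
\begin{equation*}
\frac{d}{dt}\bigl(\varepsilon\|\nabla\mathfrak{e}\|^2\bigr)\le (6\alpha+3\beta+2)\,\varepsilon\|\nabla\mathfrak{e}\|^2+\|\mathcal{N}^{\varepsilon}_{int_x}\|^2+\|\mathcal{N}^{\varepsilon}_{int_y}\|^2+2\beta\|\mathfrak{e}\|^2 .
\end{equation*}
The term $2\beta\|\mathfrak{e}\|^2$ is controlled by Theorem \ref{th:gronwall_genearlization_err_l^2_norm}, which is where $2\beta\mathcal{E}^G_{L_2}$ in $\mathcal{G}_{en}$ comes from.

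To close the argument we apply Gronwall on $[0,T_1]$ and then rescale the time interval; this produces the factor $T/T_1$ in $\kappa_{en}$. Adding the resulting bound to the $L^2$ estimate \eqref{eq: gronwall_genearlization_err_l^2_norm} gives \eqref{gronwall_genearlization_err_energy_norm}.

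The main obstacle is the boundary terms in the integrations by parts, namely $\varepsilon^2\int_{\partial\Omega_s}\partial_n\mathfrak{e}_x\,\mathfrak{e}_x$ and the term coming from the convection. These do not vanish, because $\mathfrak{e}$ is not zero on the boundary. I would try to absorb them using the test function $\mathbb{V}\in H^1_0$: the weighted residual $\mathcal{N}_{res}\mathbb{V}$ and its derivative vanish at the boundary, so the interior energy can be localized. Failing that, I would bound these terms through the spatial-boundary loss together with a constant of the type $\mathcal{K}^{\nabla\mathfrak{e}}_{\partial\Omega_s}$, in the same way as in Theorem \ref{th:gronwall_genearlization_err_l^2_norm}. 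Because the stated $\mathcal{G}_{en}$ contains no boundary term, I expect such contributions to be folded into $\mathcal{G}_{L_2}$.
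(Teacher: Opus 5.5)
Your skeleton coincides with the paper's: differentiate the error equation in $x$ and $y$, test with $\mathfrak{e}_x,\mathfrak{e}_y$, apply Young's inequality to the convection, reaction and residual terms, control $2\beta\|\mathfrak{e}\|^2$ through Theorem \ref{th:gronwall_genearlization_err_l^2_norm}, apply Gronwall, and add the $L^2$ bound. The genuine gap is the one you flag yourself: the boundary integrals. Neither of your two fixes closes it.

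The test function cannot help. $\mathbb{V}$ multiplies the residual, whereas $\varepsilon^2\int_{\partial\Omega_s}\partial_{\hat n}\mathfrak{e}_x\,\mathfrak{e}_x\,ds$ and $\frac{\varepsilon\mu}{2}\int_{\partial\Omega_s}(a\cdot\hat n)|\nabla\mathfrak{e}|^2\,ds$ involve only traces of first and second derivatives of $\mathfrak{e}$. In addition, on $\partial\Omega_s$ one has $(\mathcal{N}_{res,p^*}\mathbb{V})_x=\mathcal{N}_{res,p^*}\mathbb{V}_x$, which need not vanish.

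Nor can these terms be folded into $\mathcal{G}_{L_2}$. That quantity is fixed by the $L^2$ theorem. Moreover, no factor of these integrals equals $\mathfrak{e}|_{\partial\Omega_s}=\pm\mathcal{N}_{spb,p^*}$, so you cannot extract a power of the boundary loss as was done there. $\|\mathcal{N}_{spb,p^*}\|_{L^2}$ controls neither normal nor tangential derivatives of $\mathfrak{e}$ on $\partial\Omega_s$.

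What the paper does is integrate $\varepsilon\nabla\mathfrak{e}\cdot\nabla(\Delta\mathfrak{e})$ by parts and drop $-\varepsilon^2\|\Delta\mathfrak{e}\|^2$. It then bounds $\varepsilon^2\int_{\partial\Omega_s}\Delta\mathfrak{e}\,(\nabla\mathfrak{e}\cdot\hat n)\,ds$ by a nonnegative constant $\mathcal{K}^{I_1}_{\partial\Omega_s}(u,u_{N,p^*})$. This constant comes from the layer bounds \eqref{eq: exact_sol_derivative_bound_mu_less_eps}--\eqref{eq: exact_sol_derivative_bound_mu_greater_eps} for $u$ and from Theorem \ref{th: deriv_bounds_enpinn} with Remark \ref{remark: deriv_bounds_enpinn} for $u_{N,p^*}$. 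The convective boundary term is not carried into its bound for $I_2$ at all. Next it chooses $T_1\le\overline{T}$ so that this constant can be placed inside the Gronwall functional $\varepsilon\|\nabla\mathfrak{e}\|^2+\mathcal{K}^{I_1}_{\partial\Omega_s}$ with rate $(6\alpha+3\beta+2)/T_1$. At the end it discards $\mathcal{K}^{I_1}_{\partial\Omega_s}\ge 0$ from the left-hand side. That device, not a rescaling of the time interval, is the origin of $1/T_1$ in $\kappa_{en}$. For your inequality, a direct Gronwall on $[0,T]$ already gives $e^{(6\alpha+3\beta+2)T}\le e^{\kappa_{en}}$, because $T_1\le 1$.

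Three further points.

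\emph{Initial term.} Integrating your differential inequality produces $\varepsilon\|\nabla\mathfrak{e}(\cdot,0)\|^2_{L^2(\Omega_s)}=\varepsilon\|\nabla\mathcal{N}_{ini,p^*}\|^2_{L^2(\Omega_s)}$ on the right-hand side. It is not part of $\mathcal{G}_{en}$, and your proposal never disposes of it. In the paper it disappears only because it is transferred to the left-hand side with a plus sign.

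\emph{Absorbing the constant.} A constant source $K\ge 0$ in $\frac{d}{dt}\phi\le c\phi+K+\cdots$ cannot in general be removed from a Gronwall bound; take $\phi'=K$, $\phi(0)=0$. So the paper's absorption step, which is asserted only for $\overline{T}\ge T_1$, should not be relied on as it stands. What your argument can honestly deliver is the stated estimate with two additions to $\mathcal{G}_{en}$: the term $\varepsilon\|\nabla\mathcal{N}_{ini,p^*}\|^2$, and boundary constants of type $\mathcal{K}^{I_1}_{\partial\Omega_s}$ for both the diffusive and the convective boundary integrals.

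\emph{Weighted versus unweighted residual.} Your sentence that the right-hand side ``is expressed through'' $\mathcal{N}_{(int,p^*)_x}$ and $\mathcal{N}_{(int,p^*)_y}$ is unjustified. The error equation you correctly wrote has source $-\mathcal{N}_{res,p^*}$. The derivative $(\mathcal{N}_{res,p^*})_x$ cannot be recovered from $(\mathcal{N}_{res,p^*}\mathbb{V})_x$ near $\partial\Omega_s$, where $\mathbb{V}$ vanishes. The paper sidesteps this by writing \eqref{difference_con_prob} with $\mathcal{N}_{int,p^*}$ as the source from the outset. You must either adopt that convention explicitly or state the bound in terms of the unweighted residual derivatives.
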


{\begin{remark} 
    As mentioned in Remark \ref{remark: err_less_mu_less_eps_than_mu_greater_eps_l2}, the rearrangement of the convection term causes additional error for the regime $\mu^2 \geq \zeta \varepsilon/\alpha_0$. As a result, the exponential factor in the error estimate contains an extra contribution compared to the case $\mu^2\leq \zeta \varepsilon/\alpha_0$.
\end{remark}}

{\begin{remark}
    We obtain the desired inequality for $\varepsilon \| \nabla \mathfrak{e}\|_{L^2}$. The term $\|\nabla \mathfrak{e}\|_{L^2(\Omega)}$ introduces an additional first-order derivative of $\mathcal{N}_{int}$ with respect to the spatial derivatives. The minimization of the residual derivatives, including the residual and the given conditions, provides an error bound under the energy norm and also yields sharper gradients than the $L^2$ norm-based loss function {}{because it reduces the sharpness error between the ENPINN approximation and the underlying solution.}    
\end{remark}}

{
\begin{remark}
   The key advantage of ENPINN over WLPINN is its ability to reduce derivative residuals throughout training.
   Consequently, gPINN encounters significant challenges in resolving the layer phenomena, as both the residual $\mathcal{N}_{res,p}$ and its derivatives depend on inverse powers of $\varepsilon$ and $\mu$. As the parameter values decrease, the interior loss increases. ENPINN overcomes this dependence on the inverse powers of the perturbation parameters by including a suitable test function in the loss formulation. This incorporation plays a key role in mitigating the growth of residuals and their derivatives, which is the primary reason for ENPINN's superior performance compared with gPINN. 
\end{remark}
}

\begin{theorem}\label{th: gen_err_energy_norm}
     Let $u$ and $u_{N,{{p}^*}}$ be defined in Theorem \ref{th:gronwall_genearlization_err_energy_norm}. Then the generalization error estimate in the previously defined energy norm \eqref{energy norm} satisfies
     \begin{equation}\label{genearlization_err_energy_norm}
            \mathcal{E}^{en}_G=\|u-u_{N,{}{{p}^*}}\|^2_E
            \leq \mathcal{K}_{en_G}^2 \times\left(\mathcal{E}_{int^{\varepsilon}_x}^2 +\mathcal{E}_{int^{\varepsilon}_y}^2+ \mathcal{K}_{int_x}^{\varepsilon}\mathcal{C}_{int}^{-O_{int_x}^{\varepsilon}}+ \mathcal{K}_{int_y}^{\varepsilon}\mathcal{C}_{int}^{-O_{int_y}^{\varepsilon}}+ 2 \beta\mathcal{E}_{L_2}^{G}\right)+ \mathcal{E}_{L_2}^{G},
    \end{equation}
     where $\mathcal{K}_{en_G}=\sqrt{T e^{\left(\frac{ 6 \alpha+3\beta +2}{T_1}\right)T}}$, $T_1 \leq T$, $\mathcal{E}_{L_2}^{G}$ is the generalization error in $L^2$ norm, and $\mathcal{E}_{int_x^\varepsilon}$, $\mathcal{E}_{int_y^\varepsilon}$  related to the training error of interior domain associated with the derivatives of the interior residual, and $\mathcal{K}_{int_x}^\varepsilon\mathcal{C}_{int}^{-O_{int_x}^\varepsilon}$, $\mathcal{K}_{int_y}^\varepsilon\mathcal{C}_{int}^{-O_{int_y}^\varepsilon}$ are the corresponding quadrature error, respectively.  
     \begin{proof}
         The proof follows directly from Theorem \ref{th:gronwall_genearlization_err_energy_norm}, with the additional incorporation of the quadrature rule in Subsection \ref{subsec: Quadrature rules}.
     \end{proof}
\end{theorem}

\begin{remark}
    An inspection of \eqref{genearlization_err_energy_norm} reveals that the error in the energy norm depends not only on the training error but also on quadrature error and the generalization error in the $L^2$ norm. This substantiates our observation that the energy norm gives more error than the $L^2$ norm. However, the inclusion of derivative terms in the loss function enhances the sharpness of the layer in the approximation.
\end{remark}

\begin{remark}
    It is also evident from the proof that the generalization error is inherently tied to the parameter $\varepsilon$ appearing in the energy norm, as well as to the specific conditions imposed on $\varepsilon$ and $\mu$. Altering these conditions will give rise to different bounds that appear in Theorems \ref{th:gronwall_genearlization_err_energy_norm} and \ref{th: gen_err_energy_norm}. However, in each case, the generalization error is bounded above by these estimates.
\end{remark}

\subsection{Theoretical Extension of ENPINN to Coupled System of Problems}\label{subsec: Theoretical Extension of ENPINN to Systems of Equations}
Let $u_s = (u_1, u_2)^T$, $a_1 = \operatorname{diag}(a_{11}, a_{12})$, $a_2 = \operatorname{diag}(a_{21}, a_{22})$, $b = (b_{ij})_{\{i,j\}_1^2}$,
    $D_{\varepsilon} = \operatorname{diag}(\varepsilon, \mu)$, $f_s = (f_1, f_2)^T$. Then the parabolic singularly perturbed convection diffusion systems are given by,
    \begin{equation}\label{eq: system_equation}
        \begin{cases}
            \frac{\partial u_s}{\partial t}-D_\varepsilon \Delta u_s+ {a_{1}}(x,y)\frac{\partial u_s}{\partial x} + {a_2}(x,y)\frac{\partial u_s}{\partial y} + b(x,y) u_s= {f_s(x,y,t)}, \quad (x,y,t) \in \Omega,\\
            u_s(x,y,t)=g_s(x,y,t), \quad (x,y)\in \partial \Omega_s=\overline{\Omega}_s\backslash \Omega_s, ~ t\in [0,T], \\
            u_s(x,y,0)=\xi_s(x,y), \quad (x,y)\in \overline{\Omega}_s,
        \end{cases}
    \end{equation}
    where $g_s = (g_1, g_2)$, $\xi_s = (\xi_1, \xi_2)$, $a_{rp} \geq \alpha > 0$, for $r,p =1,2$ and the reaction matrix satisfies $b_{rr} \geq \beta_0 \geq 0$, $\beta_2 \leq b_{rp} \leq \beta_1 \leq 0$, if $r \neq p$, $r,p = 1,2$, $\beta_3 = - \beta_2 \geq 0$ and $\displaystyle\sum_{p=1}^2 b_{rp} \geq 0$, $r,p =1,2$. These conditions assure the uniqueness of the solutions \cite{kumar2025uniformly, linss2009numerical}.
    The ENPINN loss to approximate the solutions of \eqref{eq: system_equation} is defined as 
    \begin{equation}\label{eq: enpinn_loss_sys}
        \begin{aligned}
            \mathcal{L}_{en}^{C,s}(p) = 
            &\, \lambda_{int}^1\int_{\Omega_s \times [0,T]} \mathcal{N}_{int,p}^1 + \lambda_{int}^2\int_{\Omega_s \times [0,T]} \mathcal{N}_{int,p}^2 + \lambda_{int}^1 \int_{\Omega_s \times [0,T]} \mathcal{N}_{(int,p)_x}^1   \\
            &+\lambda_{int}^2 \int_{\Omega_s \times [0,T]} \mathcal{N}_{(int,p)_x}^2 + \lambda_{int}^1\int_{\Omega_s \times [0,T]} \mathcal{N}_{(int,p)y}^1
            + \lambda_{int}^2\int_{\Omega_s \times [0,T]} \mathcal{N}_{(int,p)y}^2 \\ &+ \lambda_{spb}^1 \int_{\partial\Omega_s \times [0,T]} \mathcal{N}_{spb,p}^1 
            + \lambda_{spb}^2 \int_{\partial\Omega_s \times [0,T]} \mathcal{N}_{spb,p}^2 + \lambda_{ini}^1\int_{\bar{\Omega}_s} \mathcal{N}_{ini,p}^1 \\
            &+ \lambda_{ini}^2 \int_{\bar{\Omega}_s} \mathcal{N}_{ini,p}^2,
        \end{aligned}
    \end{equation}
   where $\mathcal{N}_{int,p}^i$, $\mathcal{N}_{spb,p}^i$, $\mathcal{N}_{ini,p}^i$, $\mathcal{N}_{(int,p)_x}^i$, $\mathcal{N}_{(int,p)_y}^i$ are interior, boundary, initial, and gradient loss components for $i$-th components respectively, $i=1,2$. The dependence of the total error on the ENPINN loss function defined in \eqref{eq: enpinn_loss_sys} is established in the following theorems.

    \begin{theorem}\label{th: sys_gronwall_genearlization_err_l^2_norm}
        Let {$u_s \in {H^2(\Omega) \cap C^0(\overline{\Omega})}$} denote the exact solution of \eqref{eq: system_equation}. Let $u_{s_N,p^*}$ be the approximation of u obtained using the 
        loss function $\mathcal{L}_{N}^{C,s} (p)$  in \eqref{eq: enpinn_loss_sys}. 
        Then the generalization error satisfies the following inequality
        \begin{equation}\label{gronwall_genearlization_err_l^2_norm}
            \|\mathfrak{e}_1\|_{{L^2(\Omega)}}^2 + \|\mathfrak{e}_2\|_{{L^2(\Omega)}}^2= \|u_1-u_{1_N,{{p}^*}}\|_{{L^2(\Omega)}}^2 + \|u_2-u_{2_N,{{p}^*}}\|_{{L^2(\Omega)}}^2 \leq e^{\kappa_{L_2}^s}\times \mathcal{G}_{L_2}^s,
        \end{equation}
        where $\kappa_{L_2}^s={(2\beta_3 + 1)T}$, $\mathcal{G}_{L_2}^s = \|\mathcal{N}_{ini,{p}^*}^1\|_{L^2(\Omega_s)}^2 + \|\mathcal{N}_{ini,{p}^*}^2\|_{L^2(\Omega_s)}^2 + T^{1/2} \|\mathcal{N}_{spb,{p}^*}^1\|_{L^2(\Omega)}  \mathcal{K}_{\partial \Omega_s}^{\nabla \mathfrak{e}_1} + T^{1/2} \|\mathcal{N}_{spb,{p}^*}^2\|_{L^2(\Omega)}  \mathcal{K}_{\partial \Omega_s}^{\nabla \mathfrak{e}_2} + \|\mathcal{N}_{int,{p}^*}^1\|_{L^2(\Omega)}^2 + \|\mathcal{N}_{int,{p}^*}^2\|_{L^2(\Omega)}^2$. The positive constant $\mathcal{K}_{\partial\Omega_s}^{\nabla \mathfrak{e}_i}$ is independent of $\varepsilon$ and depends on the continuous solution $u_i$, the NN approximation $u_{i_N,{{p}^*}}$, $i.e.,$ 
        $$\mathcal{K}_{\partial\Omega_s}^{\nabla \mathfrak{e}_i}\equiv \mathcal{K}_{\partial\Omega_s}^{\nabla \mathfrak{e}_i}(u_i,u_{i_N,{{p}^*}}), \quad i=1,2.$$
        \begin{proof}
            Refer to \ref{sec: appen_Stability for System}.
        \end{proof}
    \end{theorem}

   In addition to the error estimation in $L_2$ norm, the following theorem establishes the corresponding approximation error in energy norm.

    \begin{theorem}\label{th: sys_gronwall_genearlization_err_energy_norm}
         Let {$u_s\in {H^3(\Omega) \cap C^0(\overline{\Omega})}$} satisfies \eqref{eq: system_equation}  and $u_{s_N,{{p}^*}}$ be the ENPINN approximation of $u_s$. Then the energy norm-based error will satisfy the following inequality:
        \begin{equation*}\label{sys_gronwall_genearlization_err_energy_norm}
            \|\mathfrak{e}_1\|_{E}^2+  \|\mathfrak{e}_2\|_{E}^2
            \leq \mathcal{G}_{L_2}^s e^{\kappa_{L_2}^s}+\mathcal{G}_{en}^s e^{\kappa_{en}^s},
        \end{equation*}
         where $T_1 \leq T, \text{and }T \in (0,1]$,  $\kappa_{en}^s= \left(\frac{2\beta_3 +1}{T_1}\right)T$, and
         \begin{equation*}
             \begin{aligned}
                \mathcal{G}_{en}^s
                 =&\, \|\mathcal{N}_{int_x,{p}^*}^{\varepsilon,1}(x,y,t)\|^2_{L^2(\Omega)}
                 + \|\mathcal{N}_{int_y,{p}^*}^{\varepsilon,1}(x,y,t)\|^2_{L^2(\Omega)}
                 + \|\mathcal{N}_{int_x,{p}^*}^{\varepsilon,2}(x,y,t)\|^2_{L^2(\Omega)}\\
                 &+ \|\mathcal{N}_{int_y,{p}^*}^{\varepsilon,2}(x,y,t)\|^2_{L^2(\Omega)}
             \end{aligned}
         \end{equation*}
         and the quantities $\mathcal{G}_{L_2}^s, e^{\kappa_{L_2}^s}$ are defined in Theorem \ref{th: sys_gronwall_genearlization_err_l^2_norm}.
         \begin{proof}
             See details in \ref{sec: appen_Stability for System}.
         \end{proof}
     \end{theorem}

\section{Numerical Experiments}\label{sec:Numerical Experiments}
In this section, we examine several numerical examples to demonstrate the effectiveness of the proposed ENPINN loss function for two- and three-dimensional parabolic convection-diffusion-reaction problems, as well as an application to combustion flow and the Burger equation with an interior layer. {}{To demonstrate the applicability of ENPINN, no a priori knowledge regarding the layer location and layer width has been incorporated.} We compare three types of loss functions to capture the sharpness of the boundary layer, termed as WLPINN \eqref{eq: WLPINN_loss}, gPINN \eqref{eq: WLDPINN_loss}, and ENPINN \eqref{eq: enpinn_loss}.
We evaluate the accuracy of the predicted solutions using the error measures defined in \eqref{eq: l2 error} and \eqref{eq: energy error}. For all numerical experiments, we employ the dual-nested Tanh \cite{wang2025adaptive} activation function across all problems. 
To validate the theoretical claim, we examine the behavior of the generalization errors as the numbers of spatial and temporal partitions, namely $N_x$, $N_y$, and $N_t$, increase.
First, we begin our experiments with a two-parameter problem having sharp gradients with variable coefficients, for which the exact solution is known.

\begin{example}\label{ex: num_ex_var_2d}
    {As a test problem, consider the time-dependent convection diffusion reaction model in 2D as defined in \eqref{eq: main_cont_prob_lin}, where $a_1= (3-x)$, $a_2= (3-y)$ and $\varepsilon= 10^{-3}$, $\mu=10^{-4}$.}
    The source function $f$ is chosen in such a way that the exact solution is 
    \begin{align*}
        u(x,y)=0.25 &\times \left(1+\dfrac{\sin{8x}}{2}\right) \times \left(1-\exp(-t)\right)\times \left(1-\exp\left({- \lambda_1x}{\mathfrak{W}_1}\right)\right)\times \left(1-\exp\left({-y}\mathfrak{W}_2\right)\right)\\
        &\times \left(1-\exp\left({- \lambda_2(1-x) \mathfrak{W}_1}\right)\right) \times \left(1-\exp\left({-(1-y)}\mathfrak{W}_2\right)\right),
    \end{align*}
    where $\lambda_{1,2}=\left(\sqrt{1+\frac{16 \varepsilon}{\mu^2}}\right)\mp 1, \mathfrak{W}_1=\frac{\mu}{2 \varepsilon}, \mathfrak{W}_2=\frac{1}{\sqrt{\varepsilon}}.$
The exact heatplot for the solution of Example \ref{ex: num_ex_var_2d} at $t=1$ is highlighted in the Figure \ref{fig:exact_heat_t1.0_N25_var}. In this problem, training consists of $10k$ epochs of Adam followed by $10k$ epochs of L-BFGS.
The predicted solution of ENPINN is computed over $ N_x = N_y = 25$, $N_t=15$, which is illustrated in the Figure \ref{fig:pred_h_t1.0_N25_w_var}, and pointwise error between the exact and ENPINN prediction is shown in Figure \ref{fig:err_h_t1.0_N25_w_var}. It can be observed that the maximum pointwise error is approximately $2\times10^{-2}$. 

\begin{figure}[h]
    \centering
    \begin{subfigure}{0.3\textwidth}
        \centering
        \includegraphics[width=4.7cm, height=3.57cm]{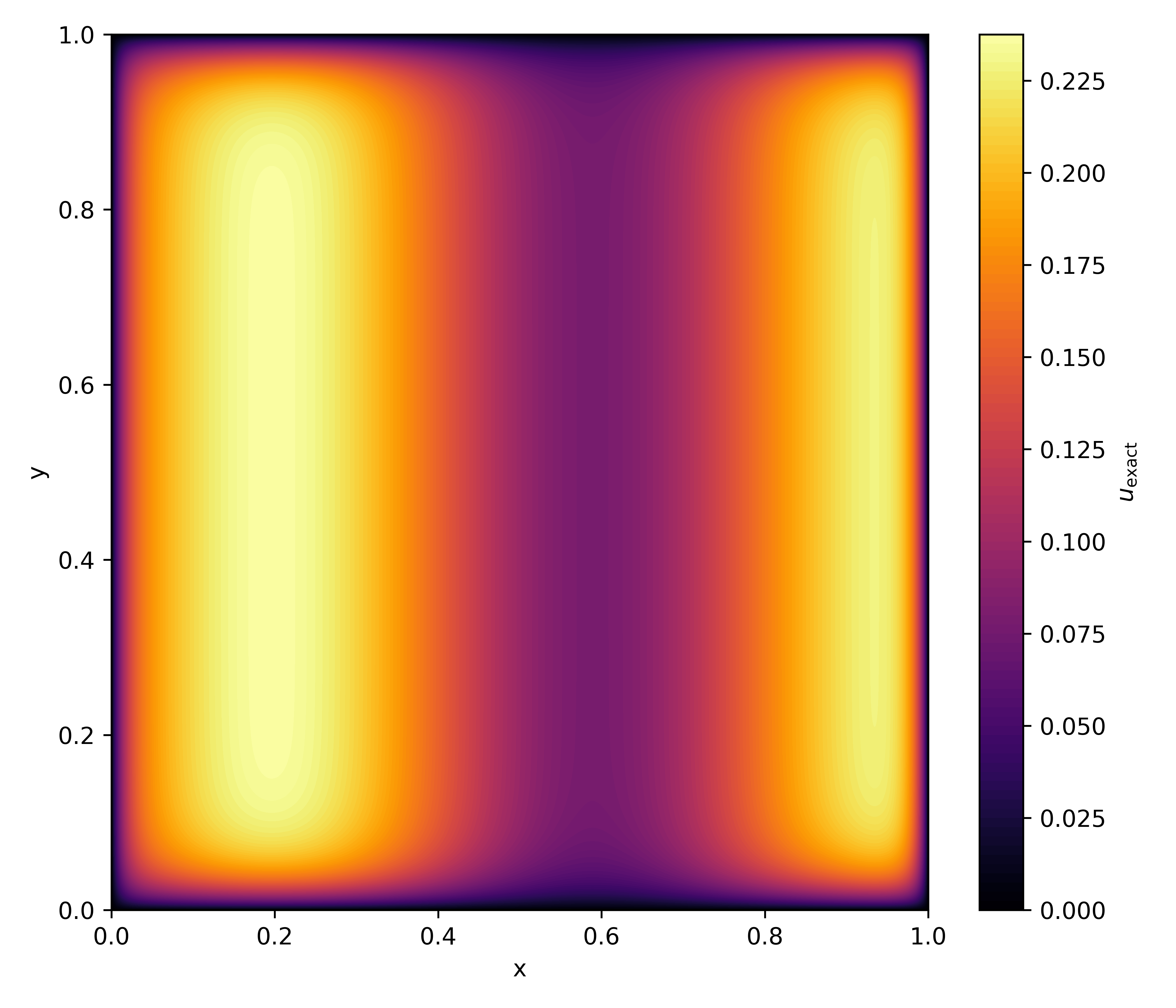}
    \caption{Exact heatmap at $t=1$}
    \label{fig:exact_heat_t1.0_N25_var}
    \end{subfigure}
    \begin{subfigure}{0.3\textwidth}
        \centering
        \includegraphics[width=5cm, height=4cm]{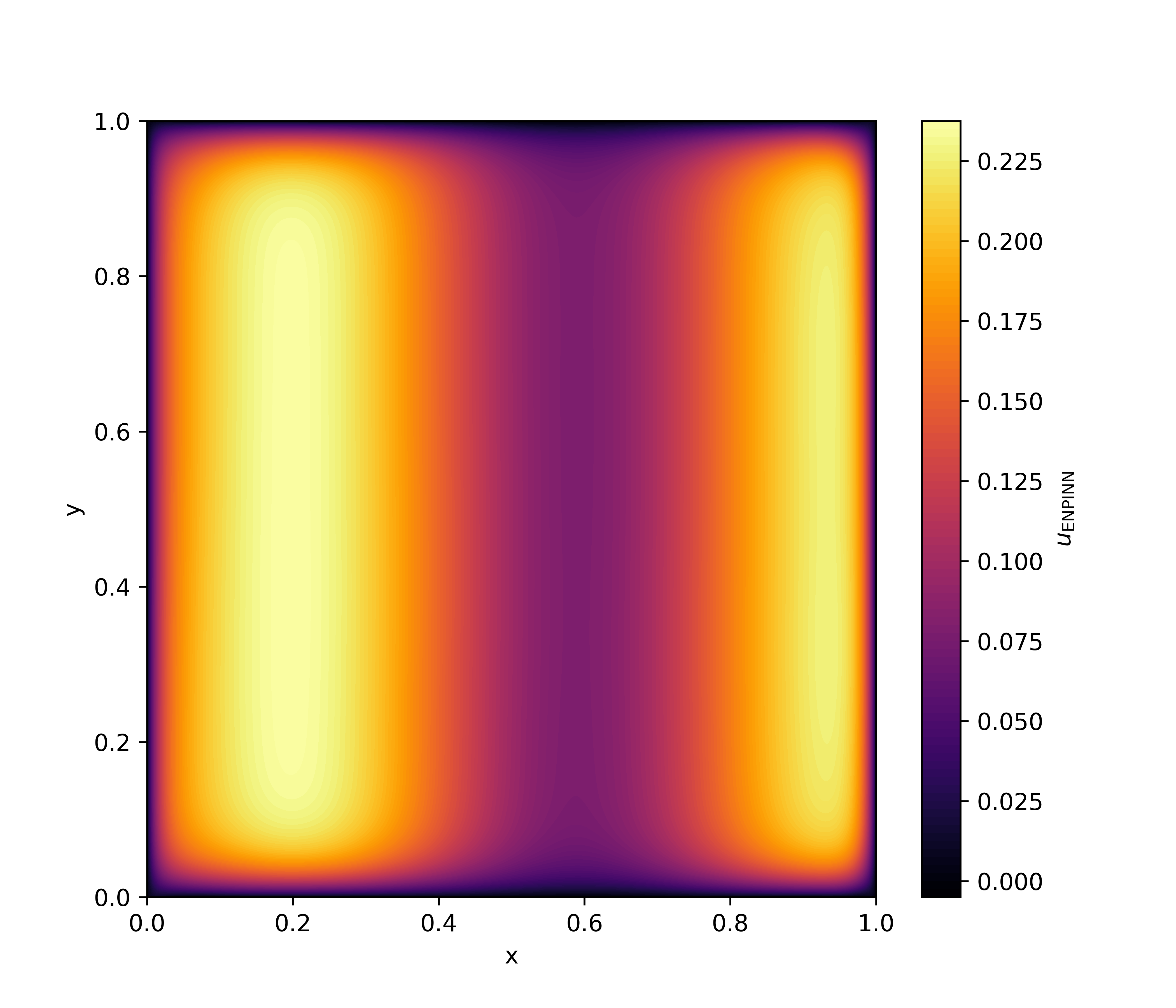}
    \caption{ENPINN prediction at $t=1$}
    \label{fig:pred_h_t1.0_N25_w_var}
    \end{subfigure}
    \begin{subfigure}{0.3\textwidth}
        \centering
        \includegraphics[width=5cm, height=4cm]{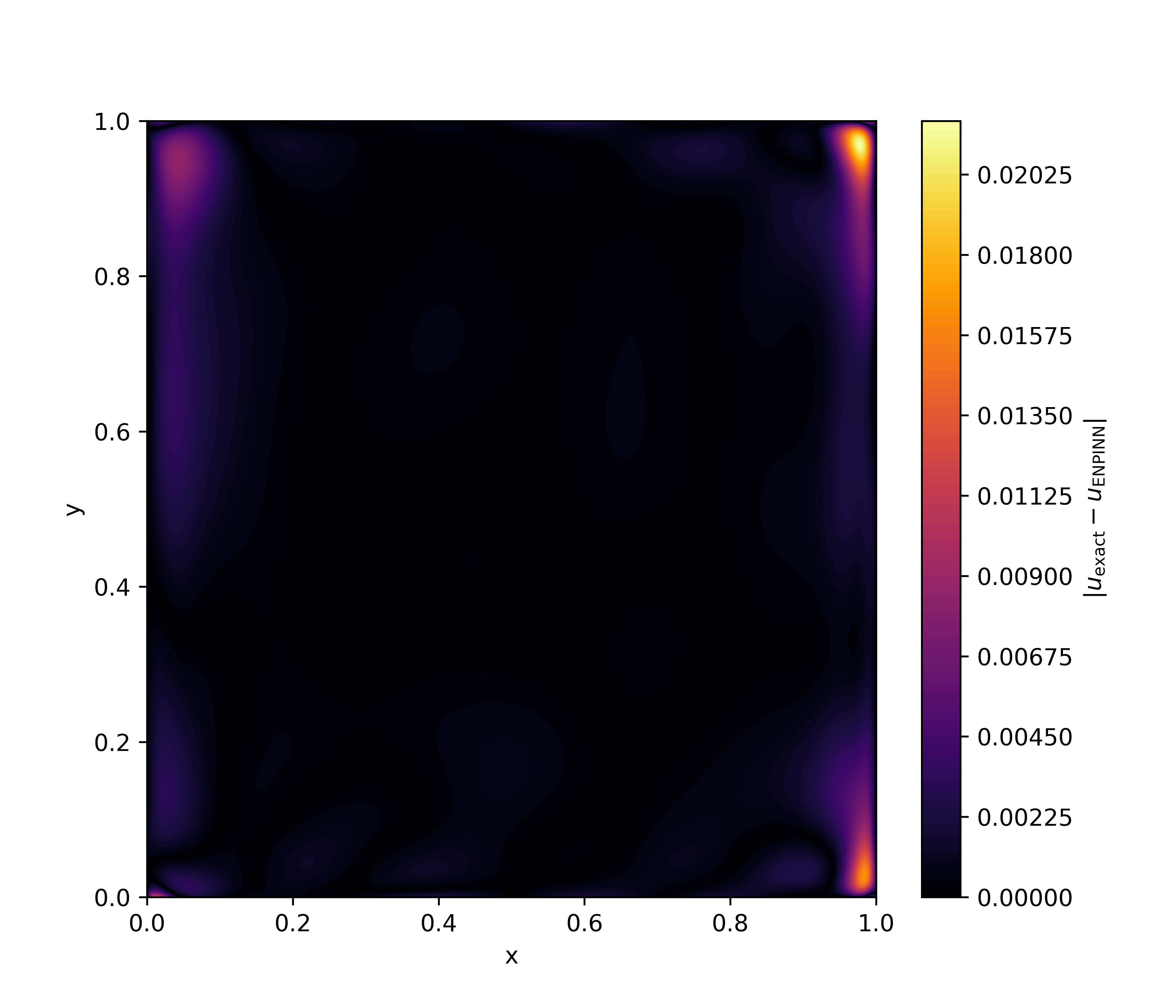}
    \caption{Pointwise error at $t=1$}
    \label{fig:err_h_t1.0_N25_w_var}
    \end{subfigure}
    \caption{Exact heatmap, ENPINN prediction, and pointwise error at $t=1$ with $N_x=N_y=25, N_t=15$ for Example \ref{ex: num_ex_var_2d}}
\end{figure}
 
Furthermore, we compare the $L^2$ and energy error for the ENPINN algorithm as the number of uniform partitions increases. Consequently, we find that both $L^2$ and the energy error decrease as the number of partitions increases, which supports our theoretical claim. Hence, the ENPINN effectively captures the sharp gradients and layer structure, as evidenced by the energy error and $L^2$ error presented in Table \ref{table: comparison_num1_experiment}.

\begin{table}[ht]
    \centering
    \caption{Comparison between Exact solution and ENPINN predicted solution for Example \ref{ex: num_ex_var_2d}.}
    \label{table: comparison_num1_experiment}
    \renewcommand{\arraystretch}{1.5}
    \begin{tabular}{c c c}
        \hline
        \multirow{2}{*}{\textbf{Partition}} 
        & \multicolumn{2}{c}{\makecell{\textbf{ENPINN}}} \\ \cline{2-3}
        & {$\mathcal{E}^{L^2}_G$} & {$\mathcal{E}^{en}_G$} \\ 
        \hline
        
        $N_x=5$, $N_y=5$, $N_t=5$  
        & $3.9\times 10^{-2} \pm 1.2 \times 10^{-2}$  & $1.2\times 10^{-1} \pm 1.2 \times 10^{-2}$  \\ 
        \hline
        
        $N_x=15$, $N_y=15$, $N_t=10$  
        & $3.4\times 10^{-4} \pm 6.8 \times 10^{-5}$ & $7.9 \times 10^{-3} \pm 2.4 \times 10^{-3}$ \\ 
        \hline

        $N_x=25$, $N_y=25$, $N_t=15$  
        & $2.8\times 10^{-4} \pm 1.7 \times 10^{-4}$ & $2.1\times 10^{-3} \pm 9.1\times 10^{-4}$  \\
        \hline
    \end{tabular}
\end{table}

To show the effectiveness of the ENPINN, we compare ENPINN with other PINN variants like VSPINN \cite{vspinn}, WLPINN, and gPINN \cite{yu2022gradient} over the same hyperparameters. Figures \ref{fig:Ep_L2_Err_Comp} and \ref{fig:Ep_Ener_Err_Comp_var} depict the progression of energy and $L^2$ errors over the training epochs. To assess neural network variability, each PINN variant is trained with multiple random seeds. The solid line represents the mean errors ($L_2$ or energy) over the training epochs, while the shaded region represents the corresponding standard deviation.
The results demonstrate that ENPINN achieves significantly lower energy and $L^2$ errors compared to VSPINN. Furthermore, WLPINN's predictions indicate that excluding the residual derivative in ENPINN results in lower accuracy.

In addition, the performance of gPINN suggests that incorporating the derivative of the test function is crucial for capturing the layer.

\begin{figure}[ht]
  \centering
  
  \begin{subfigure}[b]{0.48\textwidth}
    \includegraphics[width=\textwidth]{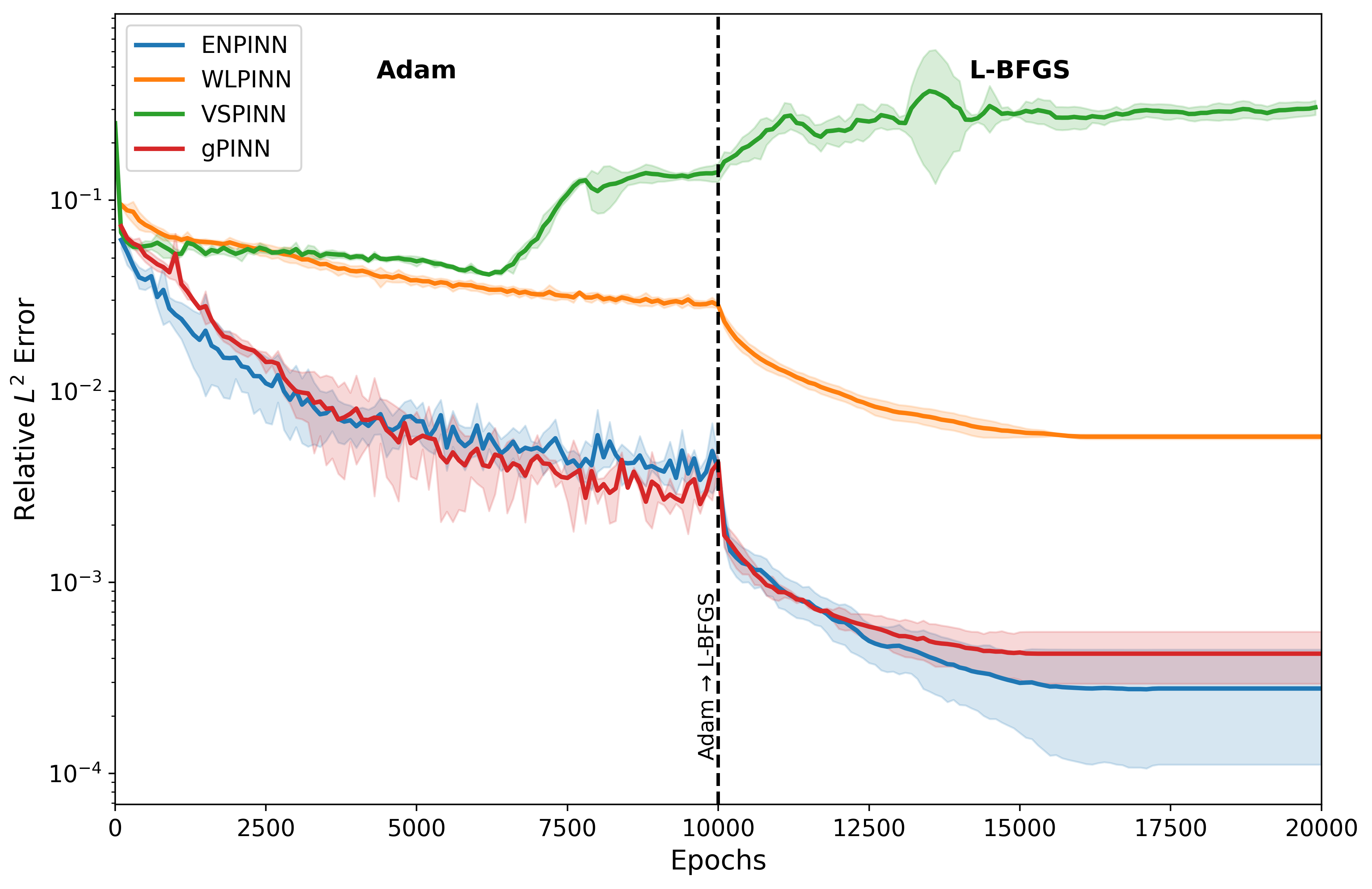}
    \caption{Comparison of $L^2$ error versus epochs}
    \label{fig:Ep_L2_Err_Comp}
  \end{subfigure}
  \begin{subfigure}[b]{0.48\textwidth}
    \includegraphics[width=\textwidth]{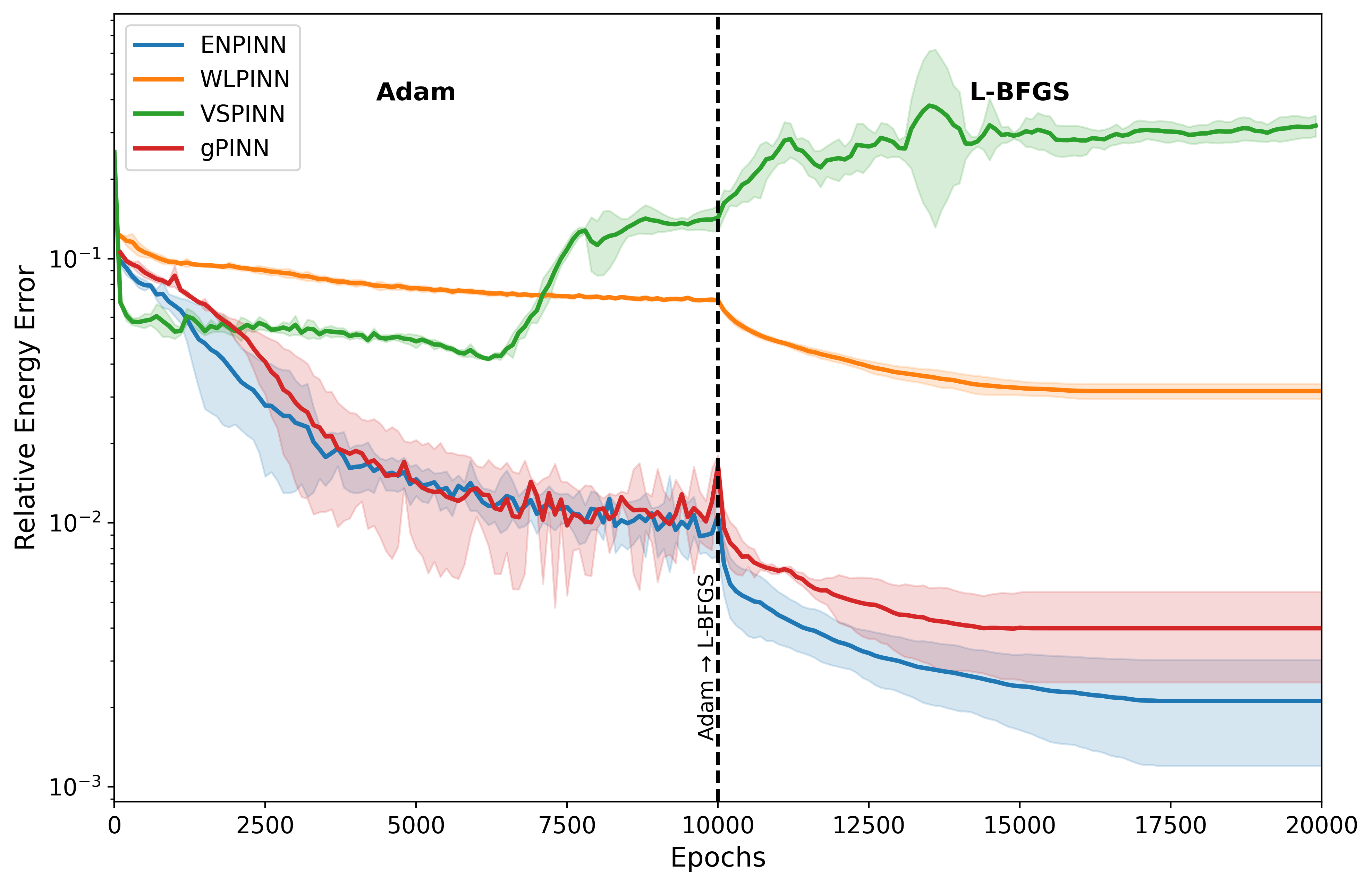}
    \caption{Comparison of energy error versus epochs}
    \label{fig:Ep_Ener_Err_Comp_var}
  \end{subfigure}
  \caption{Comparison of ENPINN with the different PINN variants for Example \ref{ex: num_ex_var_2d}}
\end{figure}

\end{example}

\begin{example}\label{ex: numerical_example_app}{}{(Combustion Flow)}
We now consider a nonlinear thermal explosion model arising in combustion theory with a nonlinear source function. In particular, we take {}{$f(x,y,u)=\sigma e ^{\gamma u}$, $ a^* = (1,1)$ in \eqref{eq: main_cont_prob_nonlin} with $\sigma>0$, $\gamma>0$ as defined in \cite{pao2012nonlinear}}. Here, we demonstrate ENPINN's superiority for thermal explosion. The numerical reference solution is presented in Figure \ref{fig: ex_h_t_0.5_app}. It is computed using the implicit Backward Difference Formula (BDF) solver in SciPy. The second-order spatial derivatives are computed using a central finite-difference scheme for the diffusion term, and an upwind scheme is used to compute the first-order derivatives. The corresponding ENPINN prediction for $\epsilon = 10^{-3}$, $\mu = 10^{-4}$
is presented in Figure \ref{fig: pred_h_t_0.5_app}. For the ENPINN implementation, we use
$ N_x=N_y=25$ and $ N_t=15$ as the number of uniform partitions, respectively. The network is trained using $5000$ epochs with the Adam optimizer, followed by $5000$ epochs with L-BFGS. 
The prediction near the boundary is prominent, indicating that ENPINN accurately captures the boundary conditions. Moreover, even a slight deviation from the boundary leads to greater variation in the solution profile, reflecting the presence of sharp boundary layers. 

\begin{figure}[ht]
    \centering
    
    \begin{subfigure}[b]{0.45\textwidth}
        \centering
        \includegraphics[width=0.8\linewidth]{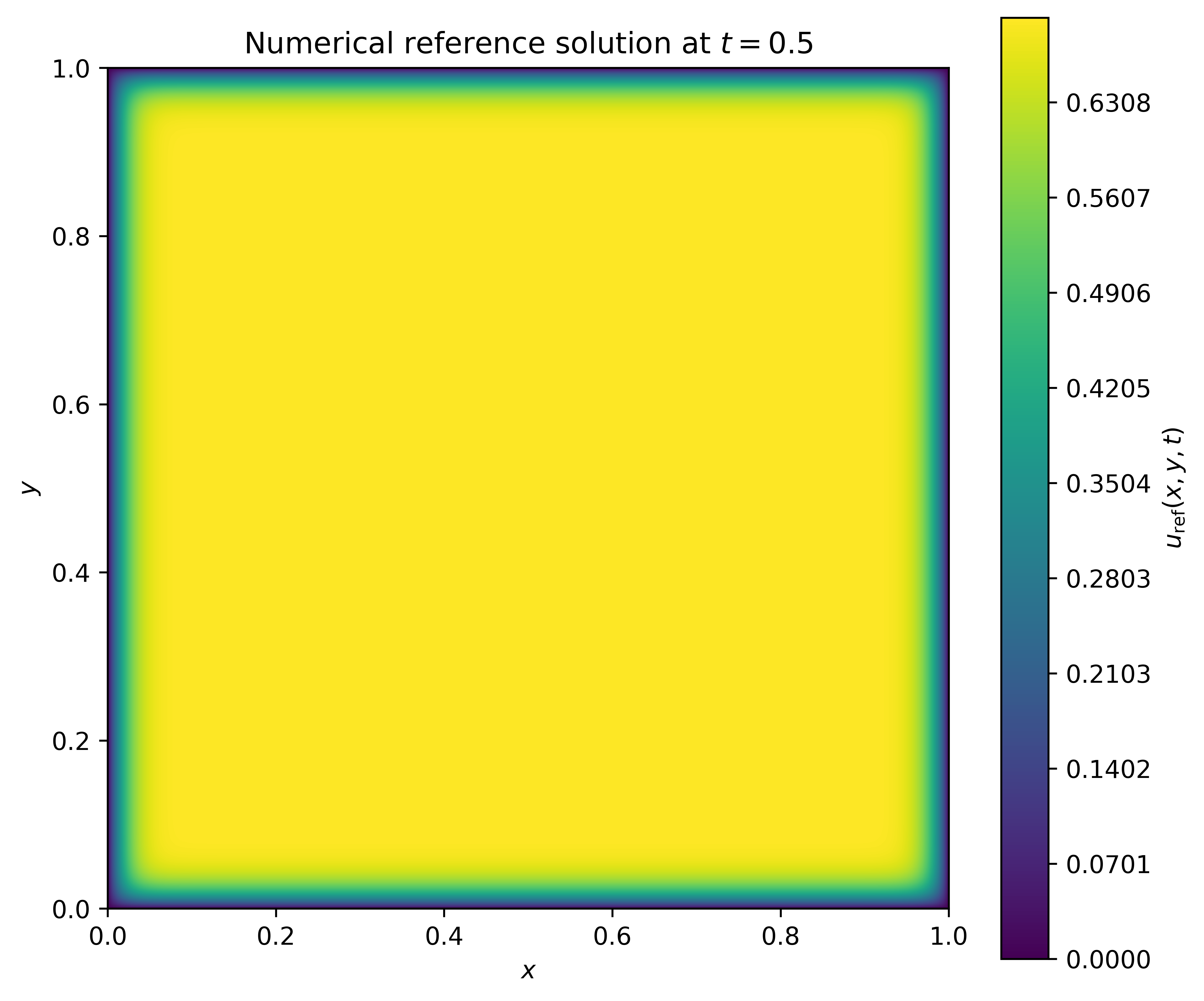}
        \caption{Reference Solution}
        \label{fig: ex_h_t_0.5_app}
    \end{subfigure}
    \hfill
    \begin{subfigure}[b]{0.45\textwidth}
        \centering
        \includegraphics[width=0.8\linewidth]{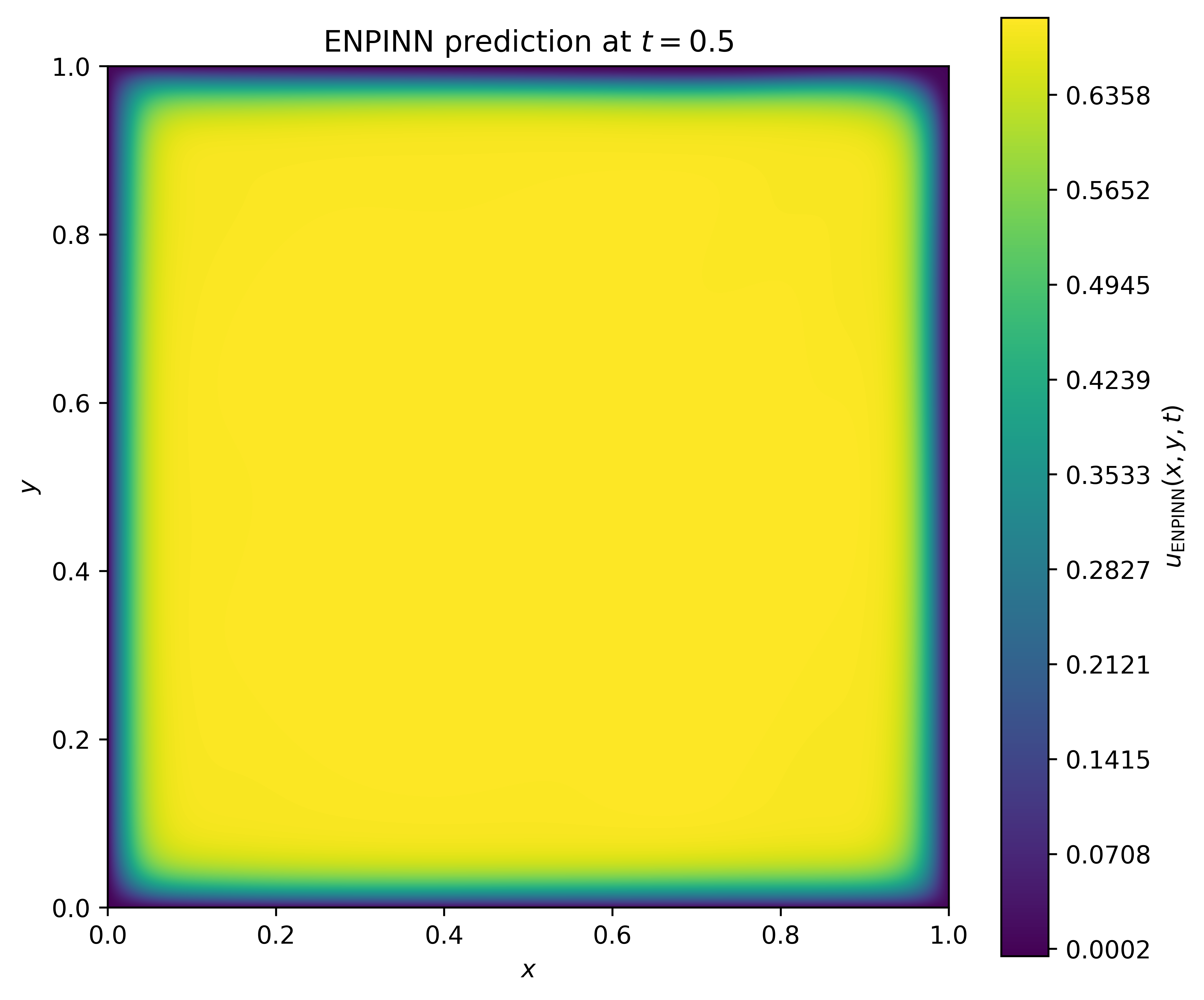}
        \caption{ENPINN Prediction}
        \label{fig: pred_h_t_0.5_app}
    \end{subfigure}
    \caption{Reference solution and ENPINN prediction at $t=0.5$ with $N_x=N_y=25$ and $N_t=15$ for Example~\ref{ex: numerical_example_app}.}
    \label{fig:reference_enpinn_app}
\end{figure}

We provide the total error in $L_2$ and energy norm with different partitions for Example \ref{ex: numerical_example_app} in Table \ref{table: comparison_num2_experiment}.

\begin{table}[ht]
    \centering
    \caption{Comparison of ENPINN predicted solution for Example \ref{ex: numerical_example_app}.}
    \label{table: comparison_num2_experiment}
    \renewcommand{\arraystretch}{1.5}
    \begin{tabular}{c c c}
        \hline
        \multirow{2}{*}{\textbf{Partition}} 
        & \multicolumn{2}{c}{\makecell{\textbf{ENPINN}}} \\ \cline{2-3}
        & {$\mathcal{E}^{L^2}_G$} & {$\mathcal{E}^{en}_G$} \\ 
        \hline
        
        $N_x=5$, $N_y=5$, $N_t=5$  
        & $2.9\times 10^{-1} \pm 1.0 \times 10^{-2}$  & $3.5\times 10^{-1} \pm 1.1 \times 10^{-2}$  \\ 
        \hline
        
        $N_x=15$, $N_y=15$, $N_t=10$  
        & $4.1\times 10^{-2} \pm 6.1 \times 10^{-3}$ & $7.7 \times 10^{-2} \pm 1.1 \times 10^{-2}$ \\ 
        \hline

        $N_x=25$, $N_y=25$, $N_t=15$  
        & $2.2\times 10^{-2} \pm 3.9 \times 10^{-3}$ & $4.3\times 10^{-2} \pm 8.4\times 10^{-4}$  \\
        \hline
    \end{tabular}
\end{table}
\end{example}

\begin{example}\label{ex: num_3d_exam}
    Performance of the proposed ENPINN on the 3D two-parametric PDEs:

    \begin{equation}\label{eq: numerical_example_3d}
            \begin{cases}
                u_t-\varepsilon \Delta u+{\mu} (3-x,3-y,3-z). \nabla u+u=f, \quad (x,y,z,t) \in (0,1)^3 \times (0,1],\\
                u(x,y,z,t)=0, \quad (x,y,z)\in (0,1)^3, ~ t\in [0,1], \\
                u(x,y,z,0)=0, \quad (x,y,z)\in (0,1)^3.
            \end{cases}
        \end{equation}
        The source function $f$ is chosen such that the solution admits the following exact solution
        \begin{align*}
            u(x,y)=\frac{1}{4}& \times \left(1-\exp(-t)\right)\times \left(1-\exp\left({- \lambda_1x}{\mathfrak{W}_1}\right)\right)\times \left(1-\exp\left({-y}\mathfrak{W}_2\right)\right) \times \left(1-\exp\left({-z}\mathfrak{W}_2\right)\right)\\
            \times& \left(1-\exp\left({- \lambda_2(1-x) \mathfrak{W}_1}\right)\right) \times \left(1-\exp\left({-(1-y)}\mathfrak{W}_2\right)\right) \times \left(1-\exp\left({-(1-z)}\mathfrak{W}_2\right)\right),
        \end{align*}
        where $\varepsilon = 10^{-5}$, $\mu = 10^{-2}$, and $\mathfrak{W}_{1,2}$, $\lambda_{1,2}$ are defined as in Example \ref{ex: num_ex_var_2d}.

    We compare the evolution of the $L^2$ and energy errors across epochs for ENPINN, gPINN, and WLPINN. Note that this example corresponds to the regime $\mu^2 \geq \epsilon$. In this case, gPINN performs poorly compared to WLPINN and ENPINN, primarily due to sharper gradients. For consistency with Example \ref{ex: num_ex_var_2d}, we have trained the model with $10k$ epochs of Adam followed by $10k$ epochs of L-BFGS. Figure \ref{fig:Ep_L2_Err_Comp_3d} depicts that WLPINN achieves better relative $L_2$ accuracy than ENPINN during most of the training process. However, the relative $L_2$ error of WLPINN begins to plateau after $16k$ iterations, whereas the $L_2$ error associated with ENPINN continues to decrease up to $20k$ iterations. This indicates the further improvement of ENPINN during the later stage of L-BFGS optimization. But a more improved behavior in the energy error of ENPINN is observed in Figure \ref{fig:Ep_Ener_Err_Comp_3d}, where ENPINN achieves better accuracy than both WLPINN and gPINN.

    \begin{figure}[h]
      \centering
      
      \begin{subfigure}[b]{0.48\textwidth}
        \includegraphics[width=\textwidth]{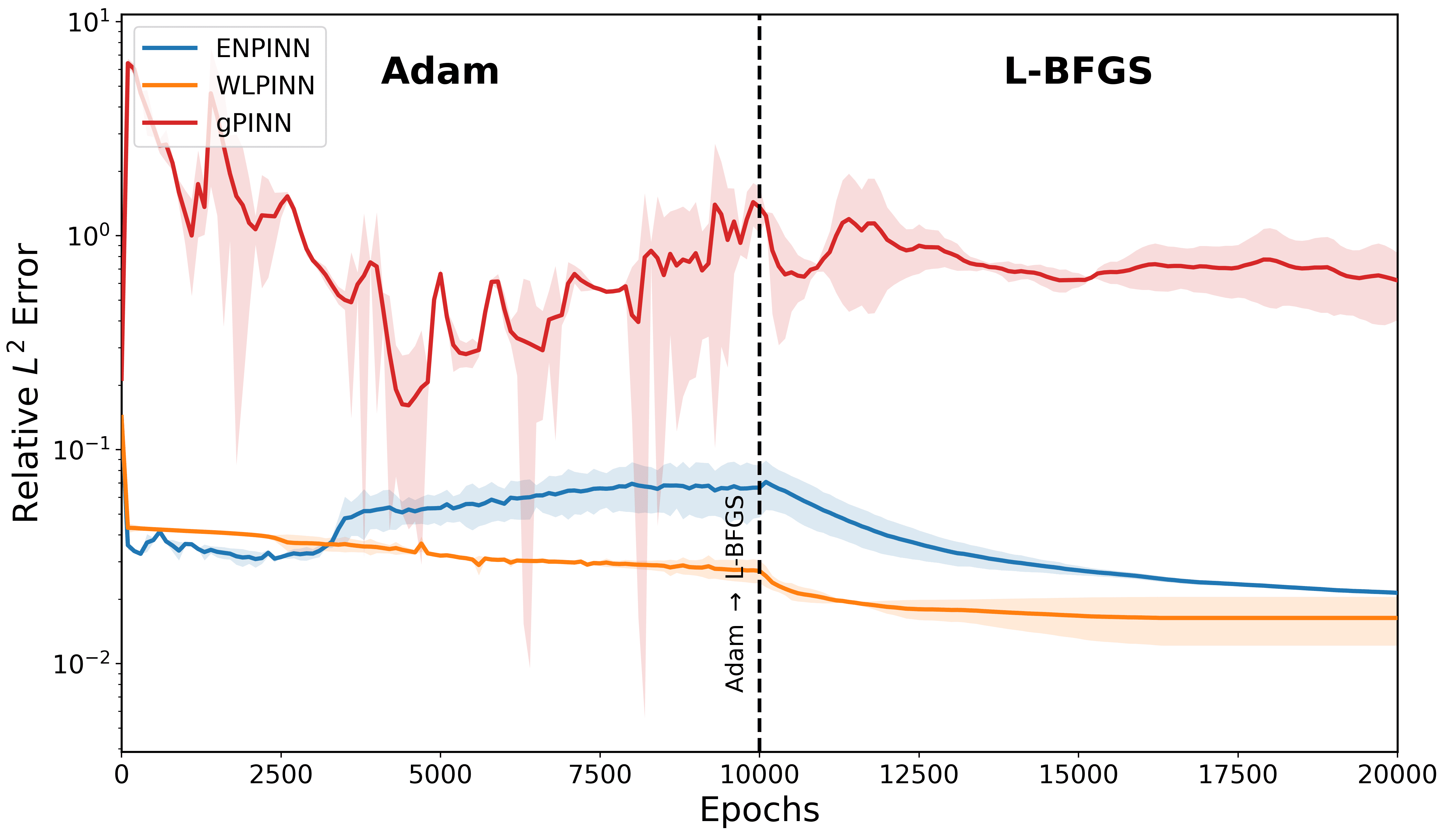}
        \caption{$L^2$ error}
        \label{fig:Ep_L2_Err_Comp_3d}
      \end{subfigure}
      \begin{subfigure}[b]{0.48\textwidth}
        \includegraphics[width=\textwidth]{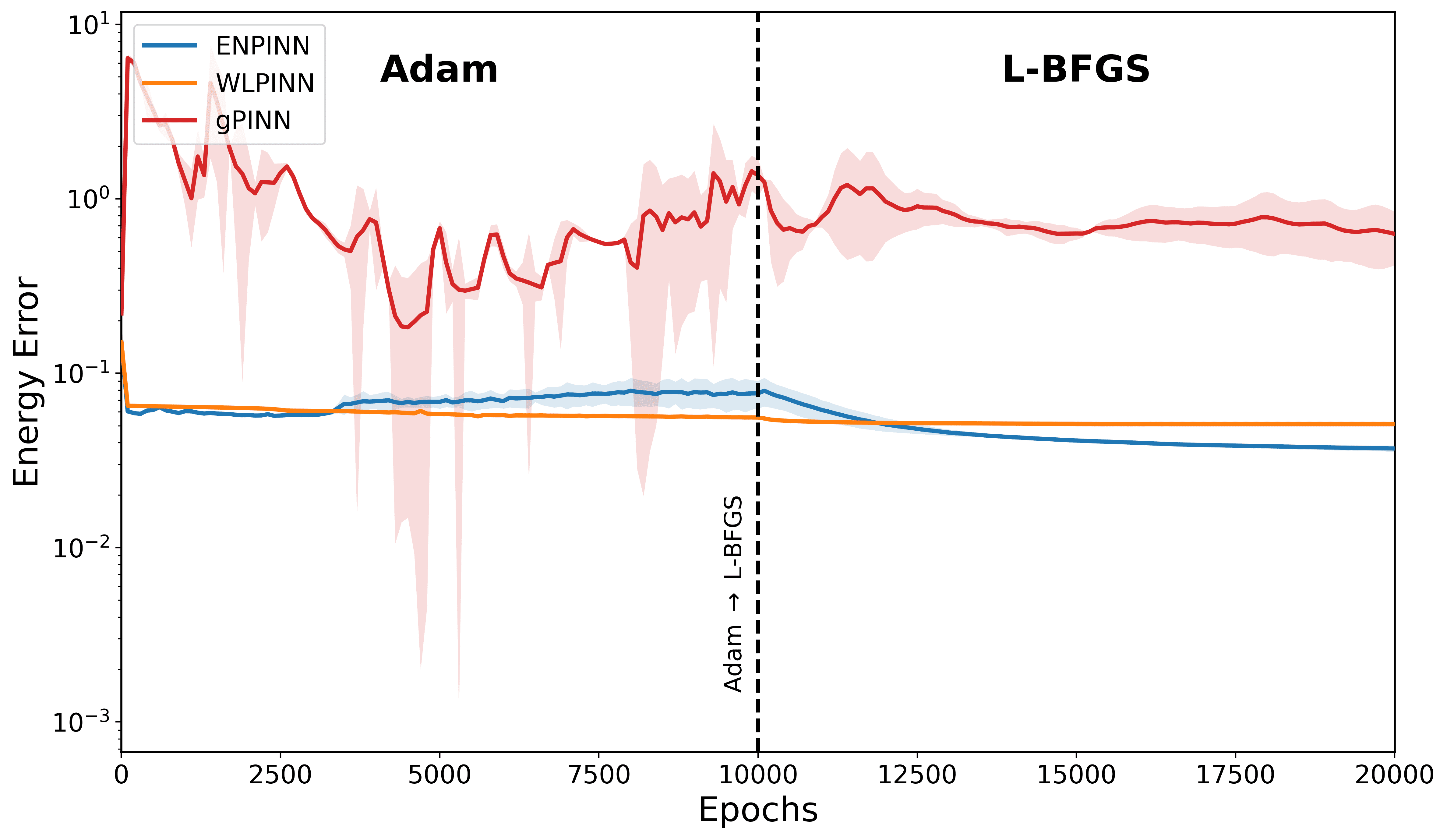}
        \caption{Energy error}
        \label{fig:Ep_Ener_Err_Comp_3d}
      \end{subfigure}
      \caption{Comparison of $L^2$ and energy error over epochs with different PINN variants at final time for Example \ref{ex: num_3d_exam}}.
    \end{figure}
\end{example}

\begin{example}\label{ex: burger_eq}
    Performance of the proposed ENPINN on the 2D Burger equation:
    \begin{equation}\label{eq: burger_pde_equation}
        \begin{cases}
                u_t-\varepsilon \Delta u+u (u_x+u_y)=0, \quad (x,y,t) \in (-1,1)^2 \times (0,1],\\
                u(x,y,t)=0, \quad (x,y)\in (-1,1)^2, ~ t\in [0,1], \\
                u(x,y,0)=-\sin{\pi x}\sin{\pi y}, \quad (x,y)\in (-1,1)^2,
            \end{cases}
    \end{equation}
    where $\varepsilon=0.01/\pi$.
    
    Figure \ref{fig: burger_ex_cont_xt_fix_y} presents the reference solution for a fixed value $y=0.5$ as time evolves. It reveals the existence of an interior layer in the solution of \eqref{eq: burger_pde_equation} near $x=0$. The temporal evolution along the $x$ direction for the fixed value $y=0.5$ is depicted in Figure \ref{fig: burger_pred_cont_xt_fix_y}, where ENPINN effectively captures the interior layer near $x=0$. The error between the ENPINN predicted solution and the reference solution in the energy norm is $8.8 \times 10^{-2} \pm 3.21 \times 10^{-3}$.
    
   \begin{figure}[h]
      \centering
      
      \begin{subfigure}[b]{0.45\textwidth}
        \includegraphics[width=\textwidth]{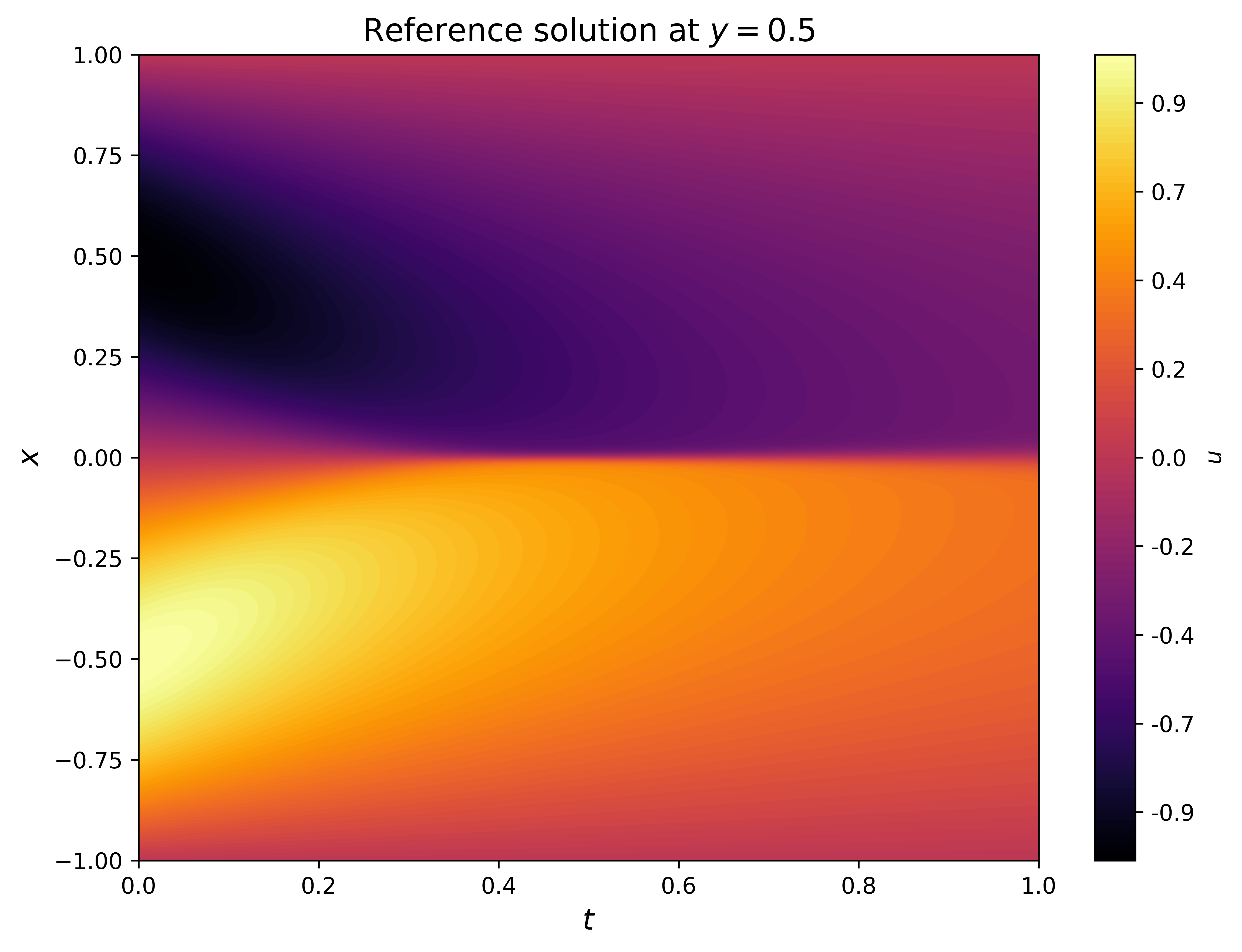}
        \caption{Reference Solution at $y=0.5$}
        \label{fig: burger_ex_cont_xt_fix_y}
      \end{subfigure}
      \begin{subfigure}[b]{0.45\textwidth}
        \includegraphics[width=\textwidth]{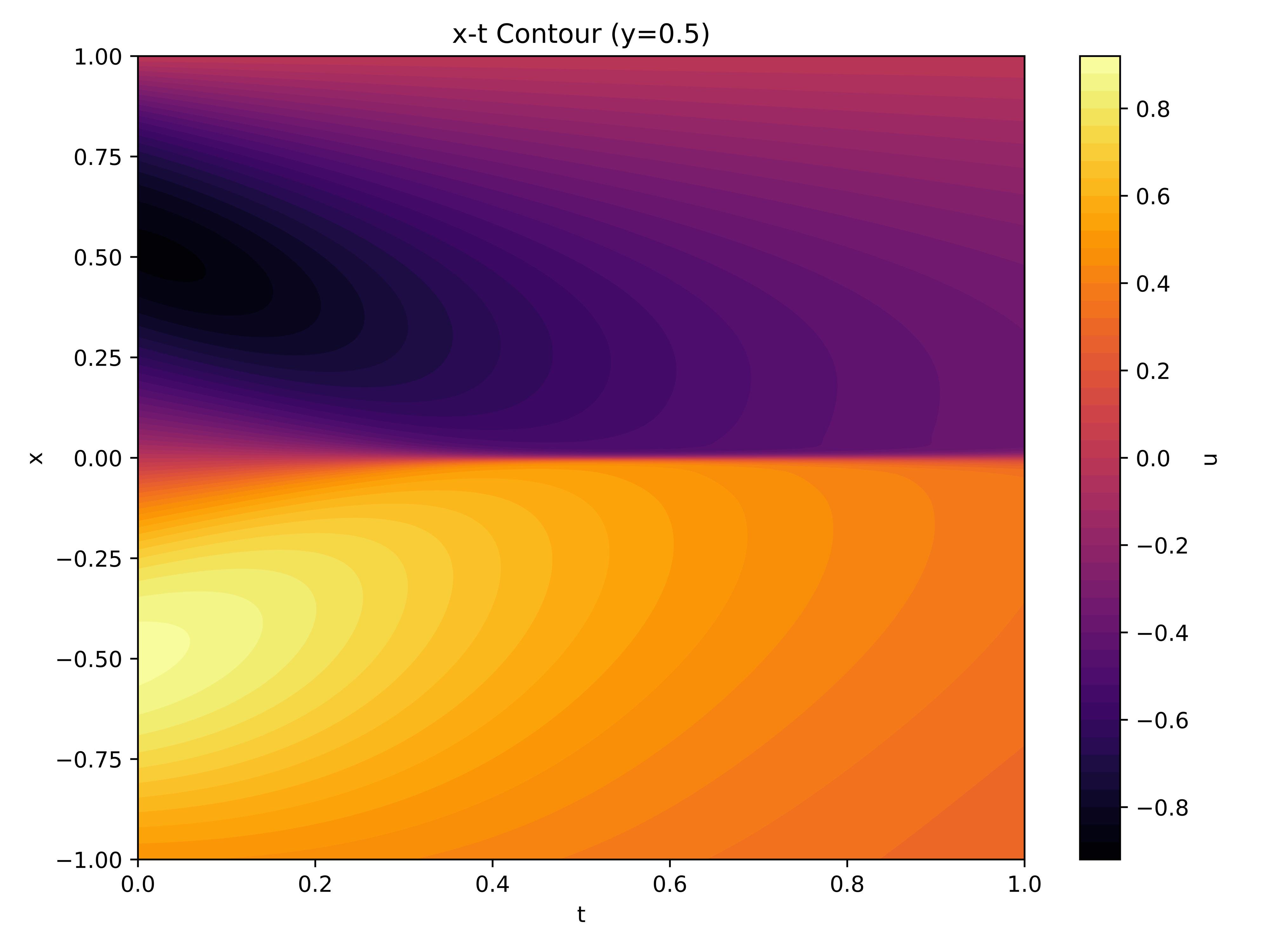}
        \caption{Prediction at $y=0.5$}
        \label{fig: burger_pred_cont_xt_fix_y}
      \end{subfigure}
      \caption{ENPINN prediction with a fixed $x$ and $y$ value for Example \ref{ex: burger_eq}.}
    \end{figure}
\end{example}

    \begin{example}\label{ex: sys_ex}
        As a test problem, consider the time-dependent convection diffusion system defined in \eqref{eq: system_equation}, where 
        \begin{align*}
            &a_1 = \operatorname{diag}(1,1), \quad a_2 = \operatorname{diag}(1,1),\\
           & b = 
            \begin{pmatrix}
                10 \quad -10s_q  \\
                -20s_q \quad 20
            \end{pmatrix}, 
            \quad s_q = (2^{16})x^4 y^4 (1-x)^4 (1-y)^4,\\
           & D_{\varepsilon} = \operatorname{diag}(10^{-4},10^{-2}),\\
           & f = ((1 - \exp{(-5t)})(x+y) + 5xy,(1 - \exp{(-10t)})(x+y) + 10xy)^T,\\
           & g_s = ((1-\exp{(-5t)})xy,(1-\exp{(-10t)})xy)^T, \quad \xi_s = (0,0)^T.
        \end{align*}
        
        We use uniform partitions with $N_x = 18 = N_y$, and $N_t = 16$. The network is trained for 15000 epochs using the Adam optimizer, followed by an additional 15000 epochs of L-BFGS optimization.
        The ENPINN prediction of the first component $u_1$, at $ t=1.0$, is presented in Figure \ref{fig: u1_pred}. The observation indicates the development of a boundary layer near $x=1.0$. Similarly, the prediction of the second component $u_2$ at $t=1.0$ shown in Figure \ref{fig: u2_pred}, also indicates a boundary layer near $x=1.0$.
        \begin{figure}[ht]
            \centering
            \begin{subfigure}{0.48\linewidth}
                \centering
                \includegraphics[width=0.8\linewidth]{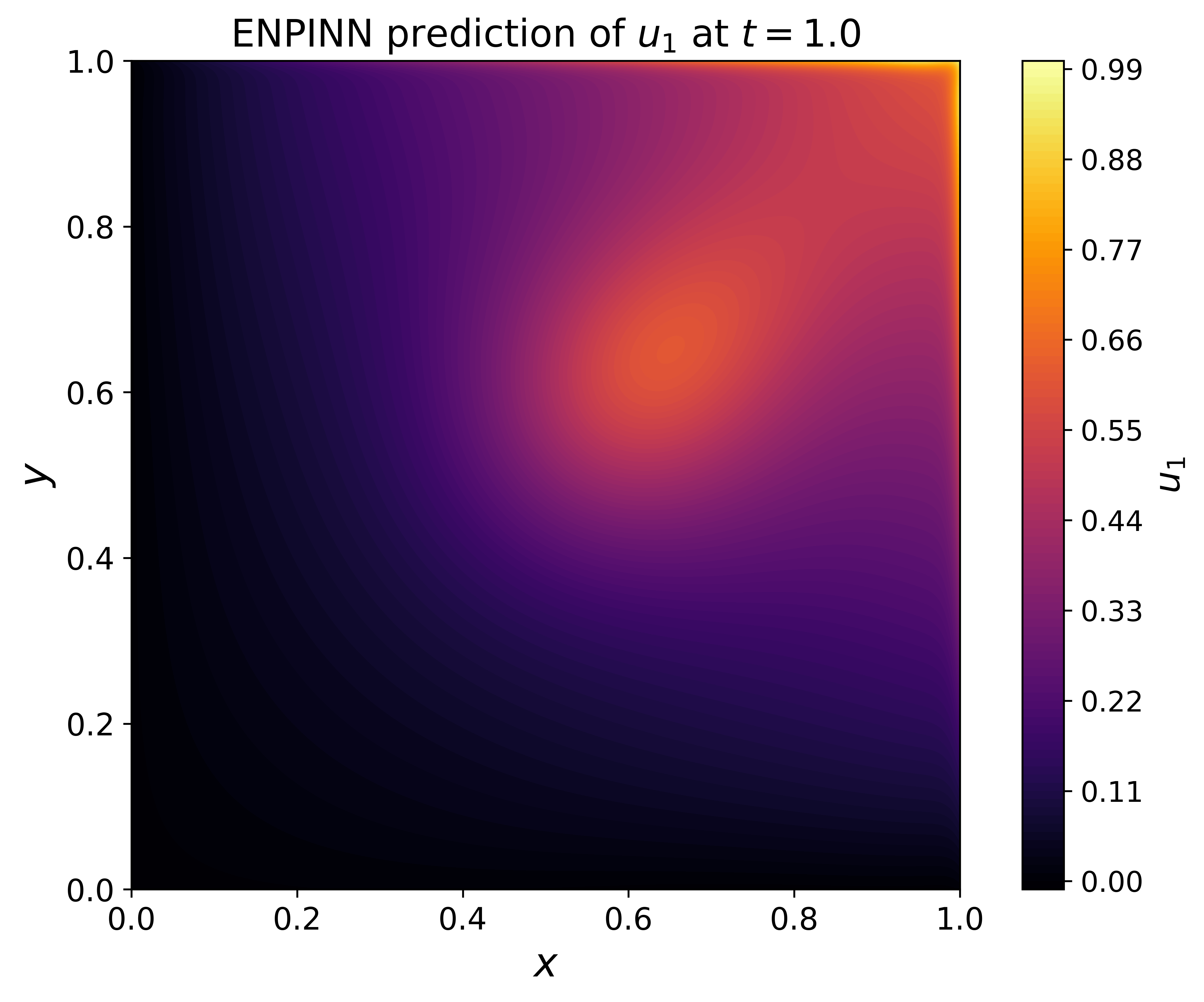}
                \caption{Prediction of the first component}
                \label{fig: u1_pred}
            \end{subfigure}
            \hfill
            \begin{subfigure}{0.48\linewidth}
                \centering
                \includegraphics[width=0.8\linewidth]{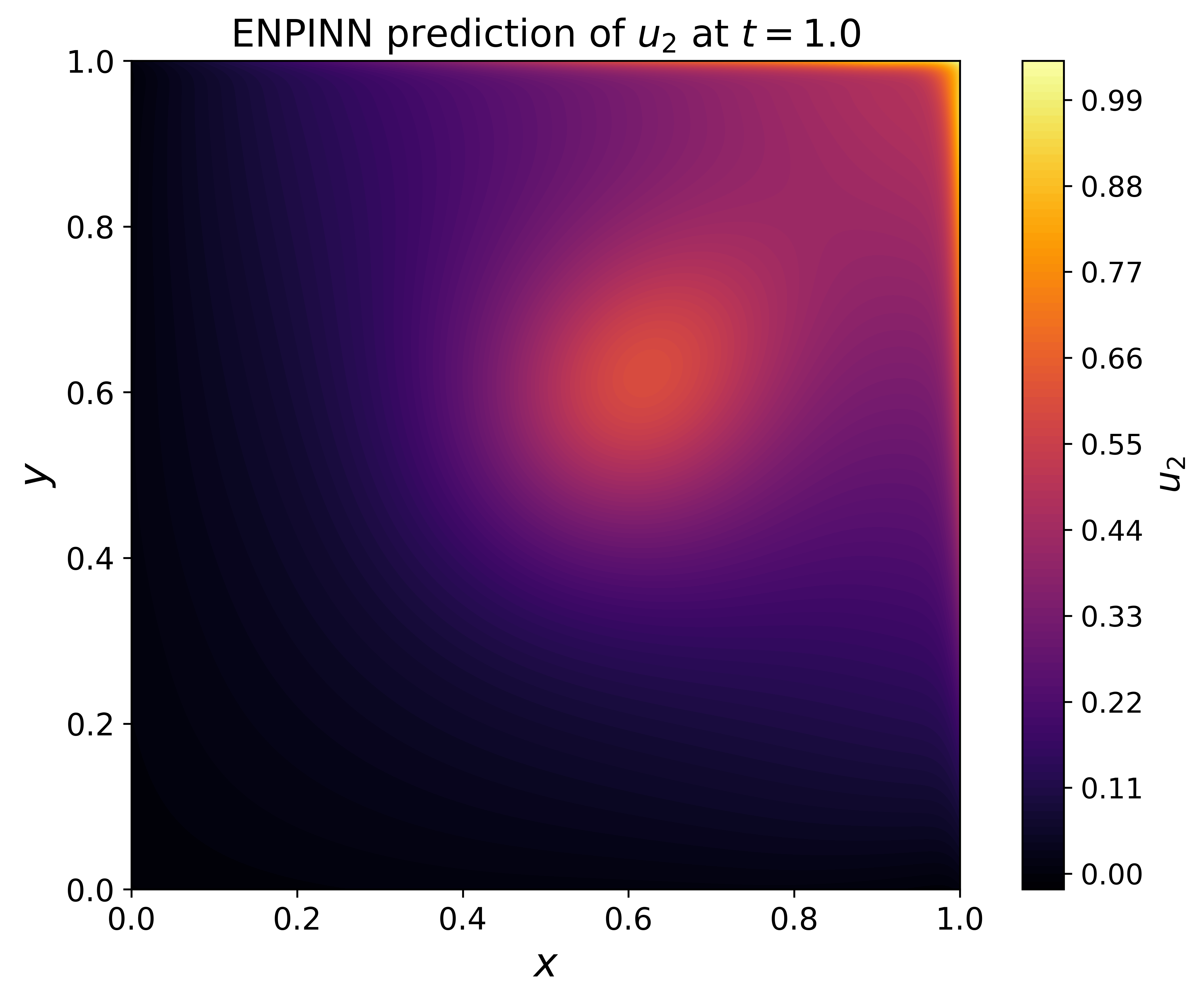}
                \caption{Prediction of the second component.}
                \label{fig: u2_pred}
            \end{subfigure}
        
            \caption{ENPINN Predicted solutions for the two components of the system \ref{ex: sys_ex}.}
            \label{fig: pred_system}
        \end{figure}
         To evaluate the accuracy of ENPINN, we first computed a reference solution using Scipy. The pointwise errors between the reference and predicted solutions are illustrated in Figures \ref{fig: u1_err} and \ref{fig: u2_err}. The resulting $L_2$ error is $9.3 \times 10^{-3} \pm 9.6 \times 10^{-4}$, while the corresponding energy error is $6.2\times 10^{-2} \pm 1.21 \times 10^{-3}$.
        \begin{figure}[ht]
            \centering
            \begin{subfigure}{0.48\linewidth}
                \centering
                \includegraphics[width=0.8\linewidth]{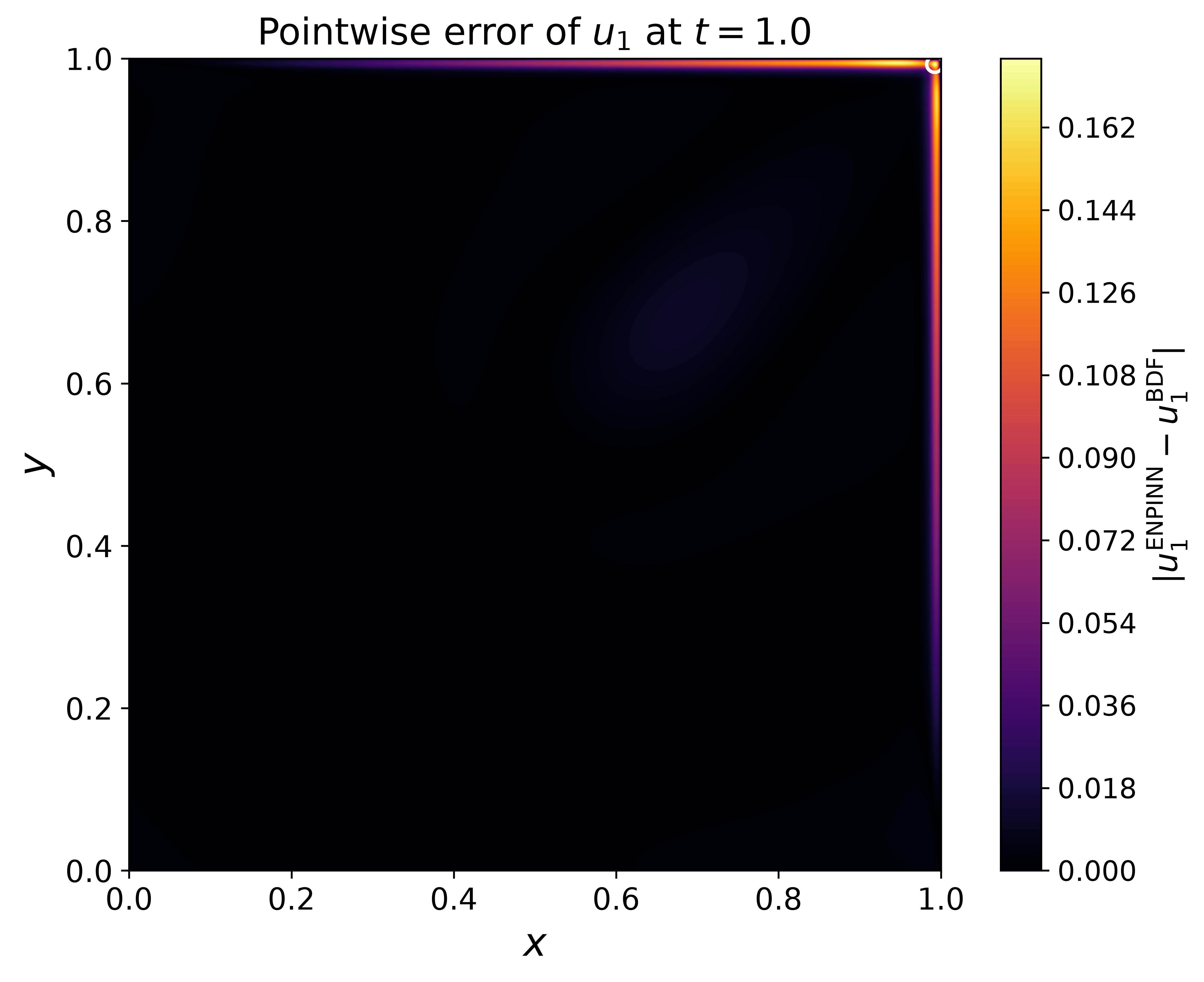}
                \caption{Pointwise error of the first component.}
                \label{fig: u1_err}
            \end{subfigure}
            \hfill
            \begin{subfigure}{0.48\linewidth}
                \centering
                \includegraphics[width=0.8\linewidth]{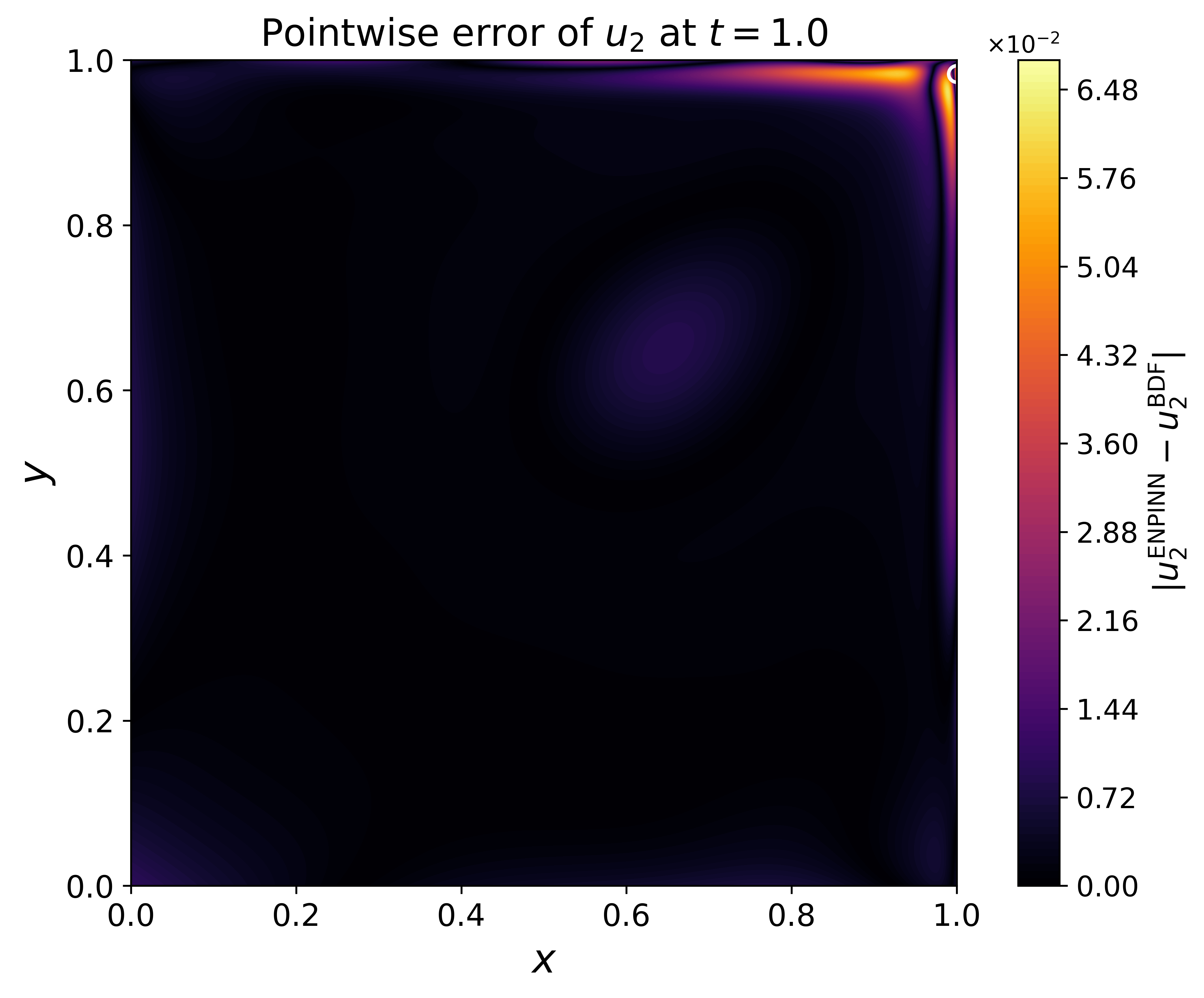}
                \caption{Pointwise error of the second component.}
                \label{fig: u2_err}
            \end{subfigure}
        
            \caption{Pointwise errors for the two components of the system \ref{ex: sys_ex} from the reference solutions.}
            \label{fig: ex_system}
        \end{figure}

    \end{example}

\section{Conclusions}\label{sec:Conclusion} 
{
Generalized convection–diffusion–reaction problems arise in a wide range of scientific and engineering applications, where sharp transitions near boundaries or within the domain present fundamental challenges for accurate numerical approximation. Resolving such layer phenomena and the associated steep gradients remains an important challenge for advancing the reliability and applicability of machine-learning-based PDE solvers. In this work, we introduce ENPINN, a physics-informed neural-network framework that incorporates the energy norm into a weak-form loss function to improve the representation of sharp gradients while maintaining stable convergence during training. By combining variational test functions with gradient-based residual information, ENPINN addresses key limitations of conventional PINN formulations in layer-dominated regimes. The main features and contributions of the proposed framework are summarized below:

\begin{itemize}
\item ENPINN overcomes key limitations of both gPINN and WLPINN, leading to improved predictive accuracy and detection of sharp boundary and interior layers.
\item We provide a detailed analysis of the limitations of WLPINN and gPINN. In WLPINN, a boundary-fitted test function is incorporated into the residual loss, whereas gPINN augments the loss with derivatives of the PDE residual. This analysis clarifies why neither formulation alone is sufficient to accurately resolve steep solution gradients.

\item We address three fundamental theoretical questions underlying PINN-based methods: existence, stability, and convergence. Drawing on perturbation theory, we establish rigorous bounds for ENPINN and demonstrate its stability, providing a necessary foundation for the convergence analysis and a mathematical basis for the proposed architecture.

\item Numerical experiments demonstrate faster convergence and higher accuracy of ENPINN compared with gPINN, WLPINN, and VSPINN, while maintaining the same representational capacity. Its effectiveness is demonstrated across diverse settings, including combustion models, Burgers equations with interior layers, three-dimensional problems, and coupled multiscale systems.

\item The energy-enriched formulation of ENPINN provides a simple yet effective mechanism for resolving sharp layers in generalized convection–diffusion–reaction problems. We further extend the formulation and its theoretical analysis to coupled systems by incorporating multiple solution components into the ENPINN loss.

\end{itemize}
Despite its promising performance, ENPINN currently has limitations that needs further investigation. The present formulation is designed primarily for generalized convection – diffusion – reaction problems with layer structures characterized by sharp gradients near boundaries or within the interior of the domain. More complex settings, including time-dependent problems with moving layers, may require substantial extensions of the framework and a corresponding theoretical analysis. Moreover, incorporating derivatives of the residual into the interior loss increases the computational cost, making ENPINN computationally comparable to gPINN. This motivates the development of more structured and mathematically informed test functions that can retain the accuracy of ENPINN while reducing its computational and empirical burden.

In summary, ENPINN provides an adaptable and theoretically grounded framework for approximating layer-dominated problems with sharp gradients. By incorporating energy-norm information into a weak-form loss, the framework improves the resolution of steep gradients while providing a principled basis for stability and generalization. An important direction for future work is the development of learnable test operators that can adapt to the underlying solution structure while reducing the computational cost associated with explicit residual derivatives. Such an extension would naturally connect ENPINN with operator-learning approaches and requires a careful investigation of both approximation properties and theoretical guarantees.

}

\section*{Data Availability}
{}{Data sharing is not applicable for this work as no data is generated.}

\section*{Declarations}
{}{The authors declare that they do not have any conflict of interest. We also declare that this work is not submitted elsewhere.}

\section*{Acknowledgment}
{}{The present research is supported by the Department of Science \& Technology, Govt. of India, for the author S. Maity; by the University Grants Commission, Govt. of India, for the author A. Patawari; and by the Ministry of Education, Govt. of India, for the author P. Das.}

\section*{Funding Declarations}
{}{There is no financial supports received by any of the authors.}

\bibliographystyle{unsrt}
\bibliography{ref}

@article{patawari2025traditional,
  title={From traditional to computationally efficient scientific computing algorithms in option pricing: {C}urrent progresses with future directions},
  author={Patawari, A. and Das, P.},
  journal={Arch. Comput. Methods Eng.},
  volume={33},
  pages={1699-1738},
  year={2025},
  publisher={Springer}
}

@article{mao2020physics,
  title={Physics-informed neural networks for high-speed flows},
  author={Mao, Z. and Jagtap, A. D. and Karniadakis, G. E.},
  journal={Comput. Methods Appl. Mech. Eng.},
  volume={360},
  pages={112789},
  year={2020},
  publisher={Elsevier}
}

@article{shukla2021parallel,
  title={Parallel physics-informed neural networks via domain decomposition},
  author={Shukla, K. and Jagtap, A. D. and Karniadakis, G. E.},
  journal={J. Comput. Phys.},
  volume={447},
  pages={110683},
  year={2021},
  publisher={Elsevier}
}

@article{hu2023augmented,
  title={{A}ugmented {P}hysics-{I}nformed {N}eural {N}etworks ({APINN}s): {A} gating network-based soft domain decomposition methodology},
  author={Hu, Z. and Jagtap, A. D. and Karniadakis, G. E. and Kawaguchi, K.},
  journal={Eng. Appl. Artif. Intell.},
  volume={126},
  pages={107183},
  year={2023},
  publisher={Elsevier}
}

@article{abbasi2025challenges,
  title={Challenges and advancements in modeling shock fronts with physics-informed neural networks: {A} review and benchmarking study},
  author={Abbasi, J. and Jagtap, A. D. and Moseley, B. and Hiorth, A. and Andersen, P. {\O}.},
  journal={Neurocomputing},
  pages={131440},
  year={2025},
  publisher={Elsevier}
}

@article{xu2019frequency,
  title={Frequency principle: {F}ourier analysis sheds light on deep neural networks},
  author={John Xu, Z. Q. and Zhang, Y. and Luo, T. and Xiao, Y. and Ma, Z.},
  journal={Commun. Comput. Phys.},
  volume= {28},
  pages={1746-1767},
  year={2020},
}

@article{menon2026fekan,
  title={{FEKAN}: {F}eature-{E}nriched {K}olmogorov--{A}rnold {N}etworks},
  author={Menon, S. S. and Jagtap, A. D.},
  journal={Comput. Methods Appl. Mech. Eng.},
  volume={461},
  pages={119141},
  year={2026},
  publisher={Elsevier}
}

@article{shin2020convergence,
  title={On the convergence of physics informed neural networks for linear second-order elliptic and parabolic type PDEs},
  author={Shin, Yeonjong and Darbon, Jerome and Karniadakis, George Em},
  journal={arXiv preprint arXiv:2004.01806},
  year={2020}
}

@article{mhaskar2026approximation,
  title={An approximation theory perspective on machine learning},
  author={Mhaskar, Hrushikesh N and Tsoukanis, Efstratios and Jagtap, Ameya D},
  journal={Neural Networks},
  volume={200},
  pages={108841},
  year={2026},
  publisher={Elsevier}
}

@article{peyvan2024riemannonets,
  title={Riemannonets: Interpretable neural operators for riemann problems},
  author={Peyvan, Ahmad and Oommen, Vivek and Jagtap, Ameya D and Karniadakis, George Em},
  journal={Computer Methods in Applied Mechanics and Engineering},
  volume={426},
  pages={116996},
  year={2024},
  publisher={Elsevier}
}

@article{goswami2024learning,
  title={Learning stiff chemical kinetics using extended deep neural operators},
  author={Goswami, Somdatta and Jagtap, Ameya D and Babaee, Hessam and Susi, Bryan T and Karniadakis, George Em},
  journal={Computer Methods in Applied Mechanics and Engineering},
  volume={419},
  pages={116674},
  year={2024},
  publisher={Elsevier}
}

@article{arihant2026nn,
  title={A semi-analytical fractional order neural network framework for two-dimensional time fractional reaction-diffusion problems mathematical and Computational analysis},
  author={Patawari, A. and Das, P. and Rana, S.},
  journal={Neural Netw.},
  Volume={Accepted},
  year={2026},
  publisher={Elsevier}
}

@article{jagtap2020adaptive,
  title={Adaptive activation functions accelerate convergence in deep and physics-informed neural networks},
  author={Jagtap, A. D. and Kawaguchi, K. and Karniadakis, G. E.},
  journal={J. Comput. Phys.},
  volume={404},
  pages={109136},
  year={2020},
  publisher={Elsevier}
}

@article{jagtap2023important,
  title={How important are activation functions in regression and classification? {A} survey, performance comparison, and future directions},
  author={Jagtap, A. D. and Karniadakis, G. E.},
  journal={J. Mach. Learn. Model. Comput.},
  volume={4},
  number={1},
  year={2023},
  publisher={Begel House Inc.}
}

@article{jagtap2020locally,
  title={Locally adaptive activation functions with slope recovery for deep and physics-informed neural networks},
  author={Jagtap, A. D. and Kawaguchi, K. and Karniadakis, G. E.},
  journal={Proc. R. Soc. A},
  volume={476},
  number={2239},
  pages={20200334},
  year={2020}
}

@article{zhang2026bubbleokan,
  title={Bubble{OKAN}: {A} physics-informed interpretable neural operator for high-frequency bubble dynamics},
  author={Zhang, Y. and Menon, S. S. and Cheng, L. and Gnanaskandan, A. and Jagtap, A. D.},
  journal={Comput. Methods Appl. Mech. Eng.},
  volume={450},
  pages={118667},
  year={2026},
  publisher={Elsevier}
}

@article{menon2026scientific,
  title={On scientific foundation models: {R}igorous definitions, key applications, and a comprehensive survey},
  author={Menon, S. S. and Mondal, T. and Brahmachary, S. and Panda, A. and Joshi, S. M. and Kalyanaraman, K. and Jagtap, A. D.},
  journal={Neural Netw.},
  pages={108567},
  year={2026},
  publisher={Elsevier}
}

@article{menon2026intelligent,
  title={Intelligent fluid flows: {A} survey of deep learning methods for turbulent flows, multiphase flows, and combustion},
  author={Menon, S. S. and Lavari, M. and Kokernak, A. and Mathew, J. and Jagtap, C. A. and Jayachandran, J. and Gnanaskandan, A. and Jagtap, A. D.},
  journal={Neurocomputing},
  volume = {697},
  pages={134117},
  year={2026},
  publisher={Elsevier}
}

@article{zhao2025pikan,
  title={A {PIKAN}-based model for the prediction of the temperature fields of castings},
  author={Zhao, Q. and Wang, B. and Kang, J.},
  journal={Sci. Rep.},
  year={2025},
  publisher={Nature Publishing Group UK London}
}

@article{wang2025kolmogorov,
  title={Kolmogorov--{A}rnold-{I}nformed neural network: {A} physics-informed deep learning framework for solving forward and inverse problems based on {K}olmogorov--{A}rnold {N}etworks},
  author={Wang, Y. and Sun, J. and Bai, J. and Anitescu, C. and Eshaghi, M. S. and Zhuang, X. and Rabczuk, T. and Liu, Y.},
  journal={Comput. Methods Appl. Mech. Eng.},
  volume={433},
  pages={117518},
  year={2025},
  publisher={Elsevier}
}

@article{shukla2021physics,
  title={A physics-informed neural network for quantifying the microstructural properties of polycrystalline nickel using ultrasound data: {A} promising approach for solving inverse problems},
  author={Shukla, K. and Jagtap, A. D. and Blackshire, J. L. and Sparkman, D. and Karniadakis, G. E.},
  journal={IEEE Signal Process. Mag.},
  volume={39},
  number={1},
  pages={68--77},
  year={2021},
  publisher={IEEE}
}

@article{jagtap2022physics,
  title={Physics-informed neural networks for inverse problems in supersonic flows},
  author={Jagtap, A. D. and Mao, Z. and Adams, N. and Karniadakis, G. E.},
  journal={J. Comput. Phys.},
  volume={466},
  pages={111402},
  year={2022},
  publisher={Elsevier}
}

@article{menon2025anant,
  title={Anant-{N}et: {B}reaking the curse of dimensionality with scalable and interpretable neural surrogate for high-dimensional {PDE}s},
  author={Menon, S. S. and Jagtap, A. D.},
  journal={Comput. Methods Appl. Mech. Eng.},
  volume={447},
  pages={118403},
  year={2025},
  publisher={Elsevier}
}

@article{abbasi2025history,
  title={History-matching of imbibition flow in fractured porous media using physics-informed neural networks ({PINN}s)},
  author={Abbasi, J. and Moseley, B. and Kurotori, T. and Jagtap, A. D. and Kovscek, A. R. and Hiorth, A. and Andersen, P. {\O}.},
  journal={Comput. Methods Appl. Mech. Eng.},
  volume={437},
  pages={117784},
  year={2025},
  publisher={Elsevier}
}

@article{jagtap2022deepD,
  title={Deep learning of inverse water waves problems using multi-fidelity data: {A}pplication to {S}erre--{G}reen--{N}aghdi equations},
  author={Jagtap, A. D. and Mitsotakis, D. and Karniadakis, G. E.},
  journal={Ocean Eng.},
  volume={248},
  pages={110775},
  year={2022},
  publisher={Elsevier}
}

@article{penwarden2023unified,
  title={A unified scalable framework for causal sweeping strategies for {P}hysics-{I}nformed {N}eural {N}etworks ({PINN}s) and their temporal decompositions},
  author={Penwarden, M. and Jagtap, A. D. and Zhe, S. and Karniadakis, G. E. and Kirby, R. M.},
  journal={J. Comput. Phys.},
  volume={493},
  pages={112464},
  year={2023},
  publisher={Elsevier}
}

@article{hu2021extended,
  title={When do extended physics-informed neural networks ({XPINN}s) improve generalization?},
  author = {Hu, Z. and Jagtap, A. D. and Karniadakis, G. E. and Kawaguchi, K.},
  journal = {SIAM J. Sci. Comput.},
  volume = {44},
  number = {5},
  pages = {A3158-A3182},
  year = {2022}
}

@article{jagtap2022deep,
  title={Deep {K}ronecker neural networks: {A} general framework for neural networks with adaptive activation functions},
  author={Jagtap, A. D. and Shin, Y. and Kawaguchi, K. and Karniadakis, G. E.},
  journal={Neurocomputing},
  volume={468},
  pages={165--180},
  year={2022},
  publisher={Elsevier}
}

@article{maity2026ti,
  title={T{I}-{P}er{PINN}: a theoretically guided neural network driven approaches for singularly perturbed convection dominated differential equations on circular domains},
  author={Maity, S. and Rathore, K.},
  journal={J. Anal.},
  volume={34},
  pages={1-19},
  year={2026},
  publisher={Springer}
}

@article{yu2022gradient,
  title={Gradient-enhanced physics-informed neural networks for forward and inverse {PDE} problems},
  author={Yu, J. and Lu, L. and Meng, X. and Karniadakis, G. E.},
  journal={Comput. Methods Appl. Mech. Eng.},
  volume={393},
  pages={114823},
  year={2022},
  publisher={Elsevier}
}

@article{kumar2025uniformly,
  title={A uniformly convergent analysis for multiple scale parabolic singularly perturbed convection-diffusion coupled systems: Optimal accuracy with less computational time},
  author={Kumar, S. and Das, P.},
  journal={Appl. Numer. Math.},
  volume={207},
  pages={534--557},
  year={2025},
  publisher={Elsevier}
}

@article{linss2009numerical,
  title={Numerical Solution of Systems of Singularly Perturbed Differential Equations.},
  author={Lin{\ss}, T. and Stynes, M.},
  journal={Comput. Methods Appl. Math.},
  volume={9},
  pages={165--191},
  year={2009}
}

@article{patawari2025modified,
  title={A modified iterative {PINN} algorithm for strongly coupled system of boundary layer originated convection diffusion reaction problems in {MHD} flows with analysis},
  author={Patawari, A. and Kumar, S. and Das, P.},
  journal={Netw. Heterog. Media},
  volume={20},
  pages={1026-1060},
  year={2025}
}

@article{uat_cybenko_1989,
  title={Approximation by superpositions of a sigmoidal function},
  author={Cybenko, G.},
  journal={Math. Control Signals Syst.},
  volume={2},
  pages={303--314},
  year={1989},
  publisher={Springer}
}

@article{pinn_2018_indroce,
  title={Physics-informed neural networks: A deep learning framework for solving forward and inverse problems involving nonlinear partial differential equations},
  author={Raissi, M. and Perdikaris, P. and Karniadakis, G. E.},
  journal={J. Comput. Phys.},
  volume={378},
  pages={686--707},
  year={2019},
  publisher={Elsevier}
}

@article{cpinn_intro,
  title={Conservative physics-informed neural networks on discrete domains for conservation laws: Applications to forward and inverse problems},
  author={Jagtap, A. D. and Kharazmi, E. and Karniadakis, G. E.},
  journal={Comput. Methods Appl. Mech. Eng.},
  volume={365},
  pages={113028},
  year={2020},
  publisher={Elsevier}
}

@article{vpinn_intro,
  title={Variational physics-informed neural networks for solving partial differential equations},
  author={Kharazmi, E. and Zhang, Z. and Karniadakis, G. E.},
  journal={arXiv preprint},
  year={2019}
}

@article{hp-vpinn_intro,
  title={{hp-VPINNs}: Variational physics-informed neural networks with domain decomposition},
  author={Kharazmi, E. and Zhang, Z. and Karniadakis, G. E.},
  journal={Comput. Methods Appl. Mech. Eng.},
  volume={374},
  pages={113547},
  year={2021},
  publisher={Elsevier}
}

@article{Xpinn,
  title={Extended physics-informed neural networks ({XPINNs}): A generalized space-time domain decomposition based deep learning framework for nonlinear partial differential equations},
  author={Jagtap, A. D. and Karniadakis, G. E.},
  journal={Commun. Comput. Phys.},
  volume={28},
  year={2020},
  publisher={Brown Univ., Providence, RI (United States)}
}

@article{o2011parameter,
  title={A parameter-uniform numerical method for a singularly perturbed two parameter elliptic problem},
  author={O’Riordan, E. and Pickett, M.L.},
  journal={Adv. Comput. Math.},
  volume={35},
  pages={57--82},
  year={2011},
  publisher={Springer}
}

@article{dasmehrmann_1D_spp,
  title={Numerical solution of singularly perturbed convection-diffusion-reaction problems with two small parameters},
  author={Das, P. and Mehrmann, V.},
  journal={BIT Numer. Math.},
  volume={56},
  pages={51--76},
  year={2016},
  publisher={Springer}
}

@article{semianalytic_rectangulardomain,
  title={Semi-analytic {PINN} methods for boundary layer problems in a rectangular domain},
  author={Gie, G. and Hong, Y. and Jung, C. and Munkhjin, T.},
  journal={J. Comput. Appl. Math.},
  volume={450},
  pages={115989},
  year={2024},
  publisher={Elsevier}
}

@article{vspinn,
  title={{VS-PINN}: A fast and efficient training of physics-informed neural networks using variable-scaling methods for solving {PDEs} with stiff behavior},
  author={Ko, S. and Park, S.},
  journal={J. Comput. Phys.},
  volume={529},
  pages={113860},
  year={2025},
  publisher={Elsevier}
}

@article{papinn_onepar,
  title={Physics-informed neural networks with parameter asymptotic strategy for learning singularly perturbed convection-dominated problem},
  author={Cao, F. and Gao, F. and Guo, X. and Yuan, D.},
  journal={Comput. Math. Appl.},
  volume={150},
  pages={229--242},
  year={2023},
  publisher={Elsevier}
}

@article {JKS11,
    AUTHOR = {John, V. and Knobloch, P. and Savescu, S. B.},
     TITLE = {A posteriori optimization of parameters in stabilized methods
              for convection-diffusion problems---{P}art {I}},
   JOURNAL = {Comput. Methods Appl. Mech. Engrg.},
    VOLUME = {200},
      YEAR = {2011},
     PAGES = {2916--2929},
}

@article {FHJ24,
    AUTHOR = {Frerichs-Mihov, D. and Henning, L. and John, V.},
    TITLE = {On Loss Functionals for Physics-Informed Neural Networks for Steady-State Convection-Dominated Convection-Diffusion Problems},
    JOURNAL = {Commun. Appl. Math. Comput. },
    Volume = {8},
    pages = {287–308},
    YEAR = {2024},
}

@article{mishra2023estimates,
  title={Estimates on the generalization error of physics-informed neural networks for approximating {PDEs}},
  author={Mishra, S. and Molinaro, R.},
  journal={IMA J. Numer. Anal.},
  volume={43},
  pages={1--43},
  year={2023},
  publisher={Oxford University Press}
}

@article{de2024error,
  title={Error estimates for physics-informed neural networks approximating the {N}avier-{S}tokes equations},
  author={De Ryck, T. and Jagtap, A. D. and Mishra, S.},
  journal={IMA J. Numer. Anal.},
  volume={44},
  pages={83--119},
  year={2024},
  publisher={Oxford University Press}
}

@article{haber1966modified,
  title={A modified Monte-Carlo quadrature},
  author={Haber, S.},
  journal={Math. Comput.},
  volume={20},
  pages={361--368},
  year={1966},
  publisher={JSTOR}
}

@article{opschoor2025neural,
  title={Neural networks for singular perturbations},
  author={Opschoor, J. A. and Schwab, C. and Xenophontos, C.},
  journal={Numer. Math.},
  volume={157},
  pages={1897--1936},
  year={2025},
  publisher={Springer}
}

@article{wang2025adaptive,
  title={Adaptive deep physics-informed neural network with dual-nested activation for solving complex partial differential equations},
  author={Wang, T. and Liu, G. and Li, E. and Xu, X.},
  journal={Comput. Methods Appl. Mech. Eng.},
  volume={444},
  pages={118125},
  year={2025},
  publisher={Elsevier}
}

@book{pao2012nonlinear,
  title={Nonlinear Parabolic and Elliptic Equations},
  author={Pao, C.},
  year={2012},
  publisher={Springer Science \& Business Media}
}

\appendix

\section{Bounds of ENPINN}\label{sec: appen_Bounds of ENPINN}
\noindent\textbf{Theorem \ref{th: existence_enpinn}}: Let $\overline{\Omega} = [0,1]$. For a fixed depth $h$ and every $\mathfrak{B}>0$, there exists $\hat{p}\in \mathcal{P}$ such that $\mathcal{L}_{en}^C(\hat{p})< \mathfrak{B}$, where the corresponding neural network $u_{N,\hat{p}}$ approximates the solution of \eqref{eq: ode_in_existence}.
\begin{proof}
        We have $\mathcal{L}_{en}^C(\hat{p}) = \|(\mathcal{L}u_{N,\hat{p}} -f)\mathbb{V}\|_{H^{1}({\Omega})}$. Therefore
        \begin{equation}\label{eq: existence_loss_bound}
            \begin{aligned}
                \|(\mathcal{L}u_{N,\hat{p}} -f)\mathbb{V}\|^2_{H^{1}({\Omega})} &\leq \|(\mathcal{L}u_{N,\hat{p}} -f)\mathbb{V}\|^2_{L^{2}({\Omega})} + \|\nabla((\mathcal{L}u_{N,\hat{p}} -f)\mathbb{V})\|^2_{L^{2}({\Omega})}\\
                & \leq C \|u-u_{N,\hat{p}}\|_{H^2({\Omega})}\\
                & \leq C \|u-u_{N,\hat{p}}\|_{L^{\infty}(\overline{\Omega})} + \|\nabla(u-u_{N,\hat{p}})\|_{L^{\infty}(\overline{\Omega})} + \|\Delta(u-u_{N,\hat{p}})\|_{L^{\infty}(\overline{\Omega})}.
            \end{aligned}
        \end{equation}
        For describing the layers of \eqref{eq: ode_in_existence}, consider the characteristic equation of \ref{eq: ode_in_existence}
        \begin{equation*}
            -\varepsilon D^2 + a\mu D+b=0,
        \end{equation*}
        which has solutions $D_{L,R}= \left(\dfrac{\mu a}{2 \varepsilon}\mp \sqrt{\dfrac{\mu^2 a^2}{4\varepsilon^2}+\dfrac{b}{\varepsilon}}\right)$.
        We can rearrange the solution terms as
        \[D_{L,R}=
        \begin{cases}
            -\dfrac{\mu a}{2 \varepsilon}\left(-1\pm \sqrt{1+\dfrac{4b\varepsilon}{\mu^2a^2}}\right), \quad &\text{if } \mu^2 \geq \dfrac{4b}{a^2} \varepsilon \\
             -\sqrt{\dfrac{b}{ \varepsilon}}\left( {-\sqrt{{\dfrac{\mu^2 a^2}{4b\varepsilon}}}\pm\sqrt{1+\dfrac{\mu^2 a^2}{4b\varepsilon}}}\right), \quad &\text{if } \mu^2 \leq  \dfrac{4b}{a^2} \varepsilon.
        \end{cases} \]
        The solution $D_L$ describes the layer near $x=0$ and $D_R$ describes the layer near $x=1$. Let us consider the case $\mu^2 \leq ({4b}\varepsilon/{a^2}) $. {The decomposition near the left boundary can be described as
        $$s_L = C^L \exp{\left(-\sqrt{\dfrac{b}{ \varepsilon}}\left( {-\sqrt{{\dfrac{\mu^2 a^2}{4b\varepsilon}}}\pm\sqrt{1+\dfrac{\mu^2 a^2}{4b\varepsilon}}}\right)x\right)}.$$}
        Take
        \begin{equation}\label{eq: left_layer_approx_NN}
            u_{N,\hat{p}}^{L}= C^L \exp(1) u_{\mathfrak{J},N}^{\exp} \circ u_{1,h-1,\mathfrak{K},T,N}^{id} \circ \mathfrak{Q}, 
        \end{equation}
        {where 
        $$\mathfrak{Q}: x \mapsto \left(\sqrt{\dfrac{b}{ \varepsilon}}\left( {-\sqrt{{\dfrac{\mu^2 a^2}{4b\varepsilon}}}\pm\sqrt{1+\dfrac{\mu^2 a^2}{4b\varepsilon}}}\right)\right)x +1.$$} {For the remainder of the proof, we introduce the following domains:  
        \begin{equation*}
            \begin{aligned}
                \Omega_1 &= \left[0, 2+ \left(\sqrt{\dfrac{b}{ \varepsilon}}\left( {-\sqrt{{\dfrac{\mu^2 a^2}{4b\varepsilon}}}\pm\sqrt{1+\dfrac{\mu^2 a^2}{4b\varepsilon}}}\right)\right)\right], \\
                \Omega_2 &= \left[1, 1+ \left(\sqrt{\dfrac{b}{ \varepsilon}}\left( {-\sqrt{{\dfrac{\mu^2 a^2}{4b\varepsilon}}}\pm\sqrt{1+\dfrac{\mu^2 a^2}{4b\varepsilon}}}\right)\right)\right].
            \end{aligned}
        \end{equation*}} 
        Therefore,
        \begin{equation}\label{eq: existence_exact_err}
            \begin{aligned}
                \|s_L - u_{N,\hat{p}}^{L}\|_{L^{\infty}(\overline{\Omega})} &{\leq} C^L \exp{(1)} \|\exp{(-)}\circ {id}\, \circ \mathfrak{Q} - u_{\mathfrak{J},N}^{\exp} \circ u_{1,h-1,\mathfrak{K},T,N}^{id} \circ \mathfrak{Q}\|_{L^{\infty}(\overline{\Omega})}\\
                & \leq C^L \exp{(1)} \|(\exp{(-)}\circ {id}\, - \exp{(-)} \circ u_{1,h-1,\mathfrak{K},T,N}^{id}) \circ \mathfrak{Q}\|_{L^{\infty}(\overline{\Omega})}\\
                & \quad + C^L \exp{(1)} \|(\exp{(-)}-u_{\mathfrak{J},N}^{\exp})\circ u_{1,h-1,\mathfrak{K},T,N}^{id} \circ \mathfrak{Q} \|_{L^{\infty}(\overline{\Omega})}\\
                & \leq C^L \exp{(1)} \|\exp{(-)}\|_{L^{\infty}{(\Omega_1})} \|id - u_{1,h-1,\mathfrak{K},T,N}^{id}\|_{L^{\infty}(\Omega_2)} \\
                &\quad+  C^L \exp{(1)} \|(\exp{(-)}-u_{\mathfrak{J},N}^{\exp}) \|_{L^{\infty}(\Omega_1)}\\
                & \leq C (\mathfrak{K}+ \mathfrak{J}).
            \end{aligned}
        \end{equation}
        Next, we show that the approximation error between the derivatives of the layer component and those of the neural network can be made arbitrarily small. 
        \begin{equation}\label{eq: existence_first_deriv_err}
            \begin{aligned}
                &\|(s_L)_x - (u_{N,\hat{p}}^{L})_x\|_{L^{\infty}(\overline{\Omega})} \\
                &\leq C^L \exp{(1)} \|(\exp{(-)}\circ {id} - u_{\mathfrak{J},N}^{\exp} \circ u_{1,h-1,\mathfrak{K},T,N}^{id})_x \|_{L^{\infty}(\Omega_2)}\\
                & \leq C^L \exp{(1)} \|(-\exp{(-)}\circ {id}\, - \exp{(-)} \circ u_{1,h-1,\mathfrak{K},T,N}^{id}) . id_x\|_{L^{\infty}(\Omega_2)}\\
                & \quad + C^L \exp{(1)} \|(-\exp{(-)} \circ u_{1,h-1,\mathfrak{K},T,N}^{id} . id_x- (-\exp{(-)} \circ u_{1,h-1,\mathfrak{K},T,N}^{id})) . (u_{1,h-1,\mathfrak{K},T,N}^{id})_x\|_{L^{\infty}(\Omega_2)}\\
                & \quad + C^L \exp{(1)} \|(-\exp{(-)}-(u_{\mathfrak{J},N}^{\exp})_x) \circ u_{1,h-1,\mathfrak{K},T,N}^{id}. (u_{1,h-1,\mathfrak{K},T,N}^{id})_x\|_{L^{\infty}(\Omega_2)}\\
                & \leq C(2\mathfrak{K}+ 2 \mathfrak{J}).
            \end{aligned}
        \end{equation}
        Next, we estimate the error between $(s_L)_{xx}$ and $(u_{N,p}^{L,B})_{xx}$.
        
        \begin{equation}\label{eq: existence_second_deriv_err}
            \begin{aligned}
                &\|(s_L)_{xx} - (u_{N,\hat{p}}^{L})_{xx}\|_{L^{\infty}(\overline{\Omega})} \\
                &\leq C^L \exp{(1)} \|(\exp{(-)}\circ {id} - u_{\mathfrak{J},N}^{\exp} \circ u_{1,h-1,\mathfrak{K},T,N}^{id})_{xx} \|_{L^{\infty}(\Omega_2)}\\
                &\leq C^L\|\exp(-) \circ id.(id_{x})^2 - \exp(-) \circ u_{1,h-1,\mathfrak{K},T,N}^{id}.(id_{x})^2\|_{L^{\infty}(\Omega_2)}\\
                &\quad + C^L \|\exp(-) \circ u_{1,h-1,\mathfrak{K},T,N}^{id} .(id_{x})^2 - \exp(-) \circ u_{1,h-1,\mathfrak{K},T,N}^{id}. ((u_{1,h-1,\mathfrak{K},T,N}^{id})_{x})^2\|_{L^{\infty}(\Omega_2)}\\
                &\quad + C^L\|\exp(-) \circ u_{1,h-1,\mathfrak{K},T,N}^{id}. ((u_{1,h-1,\mathfrak{K},T,N}^{id})_{x})^2- (u_{\mathfrak{J},N}^{\exp})_{xx}\circ u_{1,h-1,\mathfrak{K},T,N}^{id}. ((u_{1,h-1,\mathfrak{K},T,N}^{id})_{x})^2\|_{L^{\infty}(\Omega_2)}\\
                &\quad +C^L\|\exp{(-)}\circ id \circ id_{xx} - \exp{(-)}\circ u_{1,h-1,\mathfrak{K},T,N}^{id} \circ id_{xx}\|_{L^{\infty}(\Omega_2)}\\
                &\quad +C^L \|-\exp{(-)}\circ u_{1,h-1,\mathfrak{K},T,N}^{id}.id_{xx}+ \exp{(-)}\circ u_{1,h-1,\mathfrak{K},T,N}^{id}. (u_{1,h-1,\mathfrak{K},T,N}^{id})_{xx}\|_{L^{\infty}(\Omega_2)}\\
                &\quad +C^L \|\exp{(-)}\circ u_{1,h-1,\mathfrak{K},T,N}^{id}. (u_{1,h-1,\mathfrak{K},T,N}^{id})_{xx}- (u_{\mathfrak{J},N}^{\exp})\circ u_{1,h-1,\mathfrak{K},T,N}^{id}. (u_{1,h-1,\mathfrak{K},T,N}^{id})_{xx}\|_{L^{\infty}(\Omega_2)}\\
                &\leq C(4\mathfrak{K}+ 2 \mathfrak{J}).
            \end{aligned}
        \end{equation}
        {Analogously, we can find out $u_{N,p}^L$ for the case $\mu^2 \geq ({4b}\varepsilon/{a^2})$.} 
        Similarly, neural networks corresponding to the remaining decomposition components of the exact solution can be constructed. Hence, the neural network that approximates the underlying solution is
        \begin{equation*}
            u_{N,\hat{p}} = u_{N,\hat{p}}^r + u_{N,\hat{p}}^L + u_{N,\hat{p}}^R.
        \end{equation*}
        Thus, we get the desired estimate
        \begin{equation*}
            \begin{aligned}
                \|u&-u_{N,\hat{p}}\|_{H^2({\Omega})}\\
                & \leq \|r-u_{N,\hat{p}}^r\|_{H^2({\Omega})} + \|s_L-u_{N,\hat{p}}^L\|_{H^2({\Omega})} + \|s_R-u_{N,\hat{p}}^R\|_{H^2({\Omega})}\\
                & \leq C \mathfrak{B},
            \end{aligned}
        \end{equation*}
        where {}{$\mathfrak{K}$, $\mathfrak{J}$, the required tolerances are chosen sufficiently small such that the resulting estimate is bounded above by $\mathfrak{B}$.}
    \end{proof}

     \noindent \textbf{Theorem \ref{th: deriv_bounds_enpinn}:}
     Let $u_{N,\hat{p}}^L$ be the approximation of the layer decomposition $s_L$ of $u$ {for $\mu^2 \leq ({4b}\varepsilon/{a^2})$}. Then
    \begin{equation*}
        |\partial ^{l_x} u_{N,\hat{p}}^L| \leq C \varepsilon ^{-\frac{l_x}{2}} \exp{\left(-\sqrt{\frac{b}{\varepsilon}}x\right)}. 
    \end{equation*}
    \begin{proof}
        From \eqref{eq: left_layer_approx_NN} of Theorem \ref{th: existence_enpinn}, we get the bound of $u_{N,\hat{p}}^L$ as
        \begin{equation*}
            \begin{aligned}
                |u_{N,\hat{p}}^L| &= |C^L \exp(1) u_{\mathfrak{J},N}^{\exp} \circ u_{1,h-1,\mathfrak{K},T,N}^{id} \circ \mathfrak{Q}|\\
                & \leq C |\mathfrak{J}+ \exp{(-)}\circ{u_{1,h-1,\mathfrak{K},T,N}^{id}\circ \mathfrak{Q}}|\\
                & \leq C |\mathfrak{J}+ \exp{(-)} \circ (\mathfrak{K} + id \circ \mathfrak{Q})|\\
                & \leq  C |\mathfrak{J}+ (\exp{(-)} \circ \mathfrak{K}). (\exp{(-)} \circ \mathfrak{Q})|.
            \end{aligned}
        \end{equation*}
        Thus, the first-order derivative bound with respect to $x$ is
        \begin{equation*}
            |(u_{N,\hat{p}}^L)_x| \leq C \varepsilon^{-\frac{1}{2}} \exp\left({-\sqrt{\frac{b}{\varepsilon}}}x\right),
        \end{equation*}
        where $C$ depends on $\mathfrak{J}, \mathfrak{K}$ and independent of inverse powers of $\varepsilon$. In a similar manner, we get the desired estimate.
    \end{proof}

\section{Error Estimation}\label{sec: appen_Error Estimation}
\noindent \textbf{Theorem \ref{th:gronwall_genearlization_err_l^2_norm}}
    Let {$u \in {H^2(\Omega) \cap C^0(\overline{\Omega})}$} denote the exact solution of \eqref{eq: main_cont_prob_lin} for {$\mu^2 \geq \zeta \varepsilon/\alpha_0$}. Let $u_{N,p^*}$ be the approximation of u obtained using the 
    loss function $\mathcal{L}_{N}^C (p)$  in \eqref{loss_function}. 
    Then the generalization error satisfies the following inequality
    \begin{equation*}\label{eq: appen_gronwall_genearlization_err_l^2_norm}
        \|\mathfrak{e}\|_{{L^2(\Omega_s)}}^2= \|u-u_{N,{{p}^*}}\|_{{L^2(\Omega_s)}}^2 \leq e^{\kappa_{L_2}^l}\times \mathcal{G}_{L_2}^l,
    \end{equation*}
    where $\kappa_{L_2}={(2\alpha + 1)T}$, $\mathcal{G}_{L_2} = \|\mathcal{N}_{ini,{{p}^*}}\|_{L^2(\Omega_s)}^2 + T^{1/2} \|\mathcal{N}_{spb,{{p}^*}}\|_{L^2(\Omega)}  \mathcal{K}_{\partial \Omega_s}^{\nabla \mathfrak{e}} + 2\alpha \|\mathcal{N}_{spb,{{p}^*}}\|^2_{L^2(\Omega)} + \|\mathcal{N}_{int,{{p}^*}}\|_{L^2(\Omega)}^2$. The positive constant $\mathcal{K}_{\partial\Omega_s}^{\nabla \mathfrak{e}}$ is independent of $\varepsilon$ and depends on the continuous solution $u$, the NN approximation $u_{N,{{p}^*}}$, $i.e.,$ 
    $$\mathcal{K}_{\partial\Omega_s}^{\nabla \mathfrak{e}}\equiv \mathcal{K}_{\partial\Omega_s}^{\nabla \mathfrak{e}}(u,u_{N,{{p}^*}}).$$
    \begin{proof}
        Using \eqref{eq: main_cont_prob_lin} and residuals defined in Subsection \ref{subsec:Modified Residuals}, we obtain the following problem for $\mathfrak{e}$:
        \begin{equation}\label{difference_con_prob}
        \begin{cases}
            \mathfrak{e}_t=\varepsilon \Delta \mathfrak{e}-\mu {a}(x,y). \nabla \mathfrak{e}-b(x,y)\mathfrak{e} + \mathcal{N}_{int,{{p}^*}}(x,y,t), \quad (x,y,t) \in \Omega\equiv \Omega_s \times \Omega_t,\\
            \mathfrak{e}(x,y,t)=\mathcal{N}_{spb,{{p}^*}}(x,y,t), \quad (x,y)\in \partial \Omega_s=\overline{\Omega}_s\backslash \Omega_s, ~ t\in [0,T], \\
            \mathfrak{e}(x,y,0)=\mathcal{N}_{ini,{{{p}^*}}}(x,y), \quad (x,y)\in \Omega_s.
        \end{cases}
        \end{equation}
        Here, we denote $\hat{n}$ as the outward unit normal vector to $\partial \Omega_s$.\\
        Multiplying the first equation in \eqref{difference_con_prob} by $\mathfrak{e}$ and integrate over $\Omega_s$, we get  
    \begin{align*}
        \frac{1}{2}\frac{d}{dt}\displaystyle\int_{\Omega_s} |\mathfrak{e}(x,y,t)|^2 dxdy=&\varepsilon\int_{\Omega_s}\mathfrak{e}\Delta \mathfrak{e} \, dxdy-\mu\int_{\Omega_s}\mathfrak{e}\left(a(x,y). \nabla \mathfrak{e}\right) \, dxdy\\
        &-\int_{\Omega_s} b(x,y) \mathfrak{e}^2 \, dxdy + \int_{\Omega_s} \mathfrak{e}\mathcal{N}_{int,{{p}^*}}(x,y,t) \, dxdy.
    \end{align*}
    Using integration by parts, Young's inequality, and the bounds from \eqref{bound_a_b}, we obtain
    \begin{align}
            \begin{split}
                \frac{1}{2}&\frac{d}{dt}\displaystyle\int_{\Omega_s} |\mathfrak{e}(x,y,t)|^2 dxdy \\
                & \leq \, \varepsilon\int_{\partial\Omega_s}\mathfrak{e}\left(\nabla \mathfrak{e}. \hat{n}\right) \, ds - \varepsilon \int_{\Omega_s} |\nabla \mathfrak{e}|^2 \, dxdy -\dfrac{\mu}{2}\int_{\Omega_s}{a.\nabla (\mathfrak{e}^2)\, dxdy} + \frac{1}{2}\int_{\Omega_s} |\mathfrak{e}|^2 \, dxdy
            \end{split} \label{eq: change_mu_less_to_mu_gr_l2_err1}\\
            \begin{split}
                & \quad + \frac{1}{2}\int_{\Omega_s} |\mathcal{N}_{int,{}{{p}^*}}(x,y,t)|^2 \, dxdy.\\
                & \leq \, \varepsilon\int_{\partial\Omega_s}\mathfrak{e}\left(\nabla \mathfrak{e}. \hat{n}\right) \, ds +{\mu\alpha}\left(\int_{\Omega_s}{\mathfrak{e}^2\, dxdy} + \int_{\partial\Omega_s} \mathfrak{e}^2 \, ds \right) +\frac{1}{2}\int_{\Omega_s} |\mathfrak{e}|^2 \, dxdy \\
                &\quad + \frac{1}{2}\int_{\Omega_s} |\mathcal{N}_{int,{}{{p}^*}}(x,y,t)|^2 \, dxdy.
            \end{split} \label{eq: change_mu_less_to_mu_gr_l2_err2}
    \end{align}
    Applying Cauchy--Schwarz and Young's inequality, we get
    \begin{align*}
        & \leq  \left(\int_{\partial\Omega_s}|\mathcal{N}_{spb,{}{{p}^*}}|^2 \, ds\right)^{\frac{1}{2}}\left(\varepsilon \int_{\partial\Omega_s}(\mathfrak{e}_x^{^2}+\mathfrak{e}_y^{^2}) \, ds\right)^{\frac{1}{2}} +\left(\frac{2\alpha + 1}{2}\right)\int_{\Omega_s}|\mathfrak{e}|^2 \, dxdy \\
        &\quad ~~ +{\mu \alpha}\int_{\partial\Omega_s} |\mathcal{N}_{spb,{}{{p}^*}}|^2 \, dxdy + \frac{1}{2}\int_{\Omega_s}{|\mathcal{N}_{int,{}{{p}^*}}(x,y,t)|^2} \, dxdy.
    \end{align*}
    {Using the derivative bounds from \eqref{eq: exact_sol_derivative_bound_mu_less_eps}-\eqref{eq: exact_sol_derivative_bound_mu_greater_eps} along with Theorem \ref{th: deriv_bounds_enpinn} and Remark \ref{remark: deriv_bounds_enpinn}, we obtain}
    \begin{equation*}
    \begin{aligned}
        \frac{1}{2}\frac{d}{dt}\displaystyle\int_{\Omega_s} |\mathfrak{e}(x,y,t)|^2 dxdy \leq  &\left(\int_{\partial\Omega_s}|\mathcal{N}_{spb,{}{{p}^*}}|^2 \, ds\right)^{\frac{1}{2}}\mathcal{K}_{\partial \Omega_s}^{\nabla \mathfrak{e},l}(u,\mathfrak{e}) +\left(\frac{2\alpha + 1}{2}\right)\int_{\Omega_s}|\mathfrak{e}|^2 \, dxdy\\
        & +{\mu \alpha}\int_{\partial\Omega_s} |\mathcal{N}_{spb,{}{{p}^*}}|^2 \, dxdy +
         \frac{1}{2}\int_{\Omega_s} {|\mathcal{N}_{int,{}{{p}^*}}(x,y,t)|^2} \, dxdy.
    \end{aligned}
    \end{equation*}
        Integrating both sides over $[0,\overline{T}],$ $\overline{T}\leq T$, we get
        \begin{align*}
            \int_{\Omega_s} |\mathfrak{e}(x,y,\overline{T})|^2 dxdy  \leq &\int_{\Omega_s}|\mathcal{N}_{ini,{{}{{p}^*}}}(x,y)|^2\, dxdy+ \left(\int_0^T\int_{\partial\Omega_s}|\mathcal{N}_{spb,{}{{p}^*}}|^2 \, ds dt\right)^{\frac{1}{2}}T^{\frac{1}{2}}\mathcal{K}_{\partial \Omega_s}^{\nabla \mathfrak{e},l}(u,\mathfrak{e}) \\
            &+(2\alpha + 1)\int_0^{\overline{T}}\int_{\Omega_s}|\mathfrak{e}(x,y,t)|^2 \, dxdy dt+ 2{\alpha}\int_0^T\int_{\partial\Omega_s} |\mathcal{N}_{spb,{{p}^*}}|^2 \, dxdy \\
            &+ \int_0^T\int_{\Omega_s} |\mathcal{N}_{int,{}{{p}^*}}(x,y,t)|^2 \, dxdy dt.
        \end{align*}
        For convenience, we write $\mathcal{K}_{\partial \Omega_s}^{\nabla \mathfrak{e},l}(u,\mathfrak{e})$ as $ \mathcal{K}_{\partial \Omega_s}^{\nabla \mathfrak{e},l}$ in the remaining proof.
        Applying Gronwall's inequality yields,
        \begin{equation}\label{eq: final_eq_th_before_conv_L2}
            \begin{aligned}
                \int_{\Omega_s}  &|\mathfrak{e}(x,y,\overline{T})|^2 dxdy \\ 
                & \leq  e^{(2\alpha + 1)T}\times \left(\int_{\Omega_s}|\mathcal{N}_{ini,{{p}^*}}(x,y)|^2\, dxdy +\left({T}\int_0^T\int_{\partial\Omega_s}|\mathcal{N}_{spb,p^*}|^2 \, ds dt\right)^{\frac{1}{2}}\mathcal{K}_{\partial \Omega_s}^{\nabla\mathfrak{e},l}\right)\\
                & \quad + e^{(2\alpha + 1)T}\times \left({2 \alpha}\int_0^T\int_{\partial\Omega_s} |\mathcal{N}_{spb,{{p}^*}}|^2 \, dxdy + \int_0^T\int_{\Omega_s} |\mathcal{N}_{int,p^*}(x,y,t)|^2 \, dxdy dt\right).
            \end{aligned}
        \end{equation}
    \end{proof}

\noindent\textbf{Theorem \ref{th:gronwall_genearlization_err_energy_norm}:}
     Let $u\in {H^3(\Omega) \cap C^0(\overline{\Omega})}$ satisfies \eqref{eq: main_cont_prob_lin} {for $\mu^2 \geq \zeta \varepsilon/\alpha_0$} and $u_{N,{{p}^*}}$ be ENPINN approximation of u obtained through ENPINN. Then the energy norm-based error will satisfy the following inequality:
    \begin{equation*}\label{appen_gronwall_genearlization_err_energy_norm}
        \|\mathfrak{e}\|_{L^2(\Omega)}^2+ \varepsilon\|\nabla \mathfrak{e}\|_{L^2(\Omega)}^2
        \leq \mathcal{G}_{L_2} e^{\kappa_{L_2}}+\mathcal{G}_{en} e^{\kappa_{en}},
    \end{equation*}
     where $T_1 \leq T, \text{and }T \in (0,1]$, $$\kappa_{en}= \left(\frac{ 6  \alpha+3\beta +2}{T_1}\right)T, ~~ \mathcal{G}_{en}= \|\mathcal{N}_{int_x,{{p}^*}}^{\varepsilon}(x,y,t)\|^2_{L^2(\Omega)}+ \|\mathcal{N}_{int_y,{{p}^*}}^{\varepsilon}(x,y,t)\|^2_{L^2(\Omega)}+ 2 \beta\mathcal{E}_{L_2}^{G}$$ and the quantities $\mathcal{G}_{L_2}, e^{\kappa_{L_2}}$ are defined in Theorem \ref{th:gronwall_genearlization_err_l^2_norm}.
    \begin{proof}
    {The generalization error bound for $\|\mathfrak{e}\|_{L^2(\Omega)}$ is mentioned in Theorem \ref{th:generalization_err_l^2_norm}. To compute the corresponding generalization error for energy norm, we just need to identify the term associated with $\varepsilon\|\nabla \mathfrak{e}\|_{L^2(\Omega)}.$}
    To proceed, we rewrite the residual error equation \eqref{difference_con_prob} as
        $$\mathfrak{e}_t=\varepsilon (\mathfrak{e}_{xx}+\mathfrak{e}_{yy})-\mu (a_1\mathfrak{e}_x+a_2\mathfrak{e}_y)-b\mathfrak{e}+\mathcal{N}_{int,{}{{p}^*}}(x,y,t).$$
        Taking partial derivative with respect to $x$ and $y$, we obtain 
        \begin{align*}
            \mathfrak{e}_{\mathbb{n}t}&=\varepsilon \partial_{\mathbb{n}}\Delta \mathfrak{e}-\mu \left[(\partial_{\mathbb{n}}a_{1})\mathfrak{e}_x+a_{1}\mathfrak{e}_{x{\mathbb{n}}}+(\partial_ {\mathbb{n}}a_{2})\mathfrak{e}_y+a_2\mathfrak{e}_{y{\mathbb{n}}}\right]-(\partial_{\mathbb{n}}b)\mathfrak{e} - b\mathfrak{e}_{\mathbb{n}} +\mathcal{N}_{int_{\mathbb{n}}, {}{{p}^*}}(x,y,t),\\
            & {\mbox{where }} \mathbb{n}=x,y.
        \end{align*}
        Multiply $\mathfrak{e}_{xt}$ with $\mathfrak{e}_x$ and $\mathfrak{e}_{yt}$ by $\mathfrak{e}_y$, and then summing both, yields
        \begin{align*}
            \mathfrak{e}&_x\mathfrak{e}_{xt}+\mathfrak{e}_y\mathfrak{e}_{yt}\\
            &=\varepsilon (\mathfrak{e}_x\mathfrak{e}_{xxx}+\mathfrak{e}_x\mathfrak{e}_{yyx}+\mathfrak{e}_y\mathfrak{e}_{xxy}+\mathfrak{e}_y\mathfrak{e}_{yyy})\\
            &\quad- \mu \left(a_{1_x}(\mathfrak{e}_x)^2+a_{1}\mathfrak{e}_x\mathfrak{e}_{xx}+a_{2_x}\mathfrak{e}_x\mathfrak{e}_y+a_2\mathfrak{e}_x\mathfrak{e}_{yx}+a_{1_y}\mathfrak{e}_x\mathfrak{e}_y+a_{1}\mathfrak{e}_{xy}\mathfrak{e}_y+a_{2_y}(\mathfrak{e}_y)^2+a_2\mathfrak{e}_{yy}\mathfrak{e}_y\right)\\
            &\quad -\left(b_x\mathfrak{e}_x\mathfrak{e}+b(\mathfrak{e}_x)^2+b_y\mathfrak{e}\mathfrak{e}_y+b(\mathfrak{e}_y)^2\right)+\mathfrak{e}_x\mathcal{N}_{int_x,{{p}^*}}(x,y,t)+\mathfrak{e}_y\mathcal{N}_{int_y,{{p}^*}}(x,y,t).
        \end{align*}
        Now we have
        \begin{equation*}
            \mu \int_{\Omega_s} (a_{1}\mathfrak{e}_x\mathfrak{e}_{xx} + a_{2}\mathfrak{e}_x\mathfrak{e}_{yx} + a_{1}\mathfrak{e}_y\mathfrak{e}_{xy} +
                a_{2}\mathfrak{e}_y\mathfrak{e}_{yy})\, dx dy = \dfrac{\mu}{2} \int_{\Omega_s} a. \nabla(|\nabla e|^2) \, dx dy
        \end{equation*}
        Using the divergence theorem, we get 
        \begin{equation}\label{eq: en_prob_mu_in_convec}
            \begin{aligned}
                \mu \int_{\Omega_s} (a_{1}\mathfrak{e}_x\mathfrak{e}_{xx} + a_{2}\mathfrak{e}_x\mathfrak{e}_{yx} + a_{1}\mathfrak{e}_y\mathfrak{e}_{xy} +
                a_{2}\mathfrak{e}_y\mathfrak{e}_{yy})\, dx dy &= \dfrac{\mu}{2} \int_{\partial\Omega_s} a. \hat{n}(|\nabla e|^2) \, dx dy\\
                & \quad - \dfrac{\mu}{2} \int_{\Omega_s} (\nabla.a)(|\nabla e|^2) \, dx dy.
            \end{aligned} 
        \end{equation}
        We now integrate over the spatial domain $\Omega_s$ and apply the coefficients bounds in \eqref{bound_a_b} to obtain
        \begin{align*}
            \int_{\Omega_s}\left(\mathfrak{e}_x\mathfrak{e}_{xt}+\mathfrak{e}_y\mathfrak{e}_{yt}\right)\, dxdy\leq & \underbrace{\varepsilon \int_{\Omega_s}\nabla \mathfrak{e}.(\nabla(\Delta \mathfrak{e}))\, dxdy}_{I_1}\\
            &+\underbrace{\mu \alpha\left[ \int_{\Omega_s}\left((\mathfrak{e}_x)^2+(\mathfrak{e}_y)^2\right) \, dxdy + 2 \int_{\Omega_s} \mathfrak{e}_x\mathfrak{e}_y \, dxdy\right]}_{I_{21}}\\
            &-\underbrace{\mu\int_{\Omega_s}(a_{1}\mathfrak{e}_x\mathfrak{e}_{xx} + a_{2}\mathfrak{e}_x\mathfrak{e}_{yx} + a_{1}\mathfrak{e}_y\mathfrak{e}_{xy} +
                a_{2}\mathfrak{e}_y\mathfrak{e}_{yy}) \, dxdy}_{I_{22}}\\
            &+\underbrace{\beta\left[\int_{\Omega_s}\left((\mathfrak{e}_x)^2+(\mathfrak{e}_y)^2)\right)\, dxdy + \int_{\Omega_s}\left(\mathfrak{e}_x\mathfrak{e}+\mathfrak{e}\mathfrak{e}_y\right)\, dxdy\right]}_{I_3}\\
            &+\underbrace{\int_{\Omega_s}\left(\mathfrak{e}_x\mathcal{N}_{int_x,{{p}^*}}(x,y,t)+\mathfrak{e}_y\mathcal{N}_{int_y,{{p}^*}}(x,y,t)\right)\, dxdy}_{I_4}.
        \end{align*}
        From the previous equation, we get 
        \begin{equation*}
            \frac{1}{2}\frac{d}{dt}\left(|\mathfrak{e}|^2_{H^1(\Omega_s)}\right)=\frac{1}{2}\frac{d}{dt}\int_{\Omega_s}\left(\mathfrak{e}_x^{^2}+\mathfrak{e}_y^{^2}\right) \, dxdy {}{\leq} I_1+{I_{21}- I_{22}} +I_3+I_4,
        \end{equation*}
        with the terms $I_i,$ for $i=1,3,4,21,22$ representing the individual integrals described above. We now analyze each term separately.\\
        \textbf{Estimate for $I_1$:}
        Applying integration by parts on $I_1$, we get
        \begin{align*}
            {\varepsilon}I_1=&\varepsilon^2 \int_{\partial\Omega_s}\Delta \mathfrak{e}\left(\nabla \mathfrak{e}.\hat{n}\right)\, ds-\varepsilon ^2\int_{\Omega_s}|\Delta \mathfrak{e}|^2 \, dxdy\\
            \leq&\varepsilon^2 \int_{\partial\Omega_s}(\mathfrak{e}_{xx}+\mathfrak{e}_{yy})(\mathfrak{e}_xn_1+\mathfrak{e}_yn_2) \, ds,\\
            \leq & \left(\int_{\partial\Omega_s}\varepsilon^2|\mathfrak{e}_x\mathfrak{e}_{xx}| ds + \int_{\partial\Omega_s}\varepsilon^2|\mathfrak{e}_y\mathfrak{e}_{xx}|ds + \int_{\partial\Omega_s}\varepsilon^2|\mathfrak{e}_x\mathfrak{e}_{yy}|\, ds + \int_{\partial\Omega_s}\varepsilon^2|\mathfrak{e}_y\mathfrak{e}_{yy}|\, ds\right).
        \end{align*}
        {Using the bounds in \eqref{eq: exact_sol_derivative_bound_mu_less_eps}-\eqref{eq: exact_sol_derivative_bound_mu_greater_eps}, and Theorem \ref{th: deriv_bounds_enpinn} with Remark \ref{remark: deriv_bounds_enpinn}, we derive }
        \begin{equation}\label{I_1_value_gen_err_thm}
            I_1 \leq \mathcal{K}_{\partial \Omega_s}^{I_1}(u,u_{N,{{p}^*}}).
        \end{equation}
        \textbf{Estimation for $ {\varepsilon I_2=(I_{21}- I_{22})}$:}
        Applying {\eqref{eq: en_prob_mu_in_convec}} and Young's inequality in {$I_{2}$}, we get
        \begin{equation*}
            {\varepsilon} I_2 \leq  \varepsilon \alpha\left[ 3\int_{\Omega_s}\left((\mathfrak{e}_x)^2+(\mathfrak{e}_y)^2\right) \, dxdy  \right].
        \end{equation*}
        Thus, we get
        \begin{equation}\label{I_2_value_gen_err_thm}
            {\varepsilon} I_2 \leq 3 \varepsilon\alpha \int_{\Omega_s}|\nabla \mathfrak{e}|^2 \, dxdy.
        \end{equation}
        Note that the constant $\mathcal{K}_{\partial\Omega_s}^{I_1}(u,u_{N,{{p}^*}})$ are non negative.\\
        \textbf{Estimation for $ \varepsilon I_3$:}
        Using Cauchy-Schwarz and Young inequalities
        \begin{align*}
            \varepsilon I_3=& \beta \varepsilon\left[\int_{\Omega_s}|\nabla \mathfrak{e}|^2\, dxdy + \int_{\Omega_s}\left(\mathfrak{e}_x\mathfrak{e}+\mathfrak{e}\mathfrak{e}_y\right)\ dxdy\right]\\
            \leq & \beta \varepsilon\int_{\Omega_s}|\nabla \mathfrak{e}|^2\, dxdy + {\beta } \int_{\Omega_s} |\mathfrak{e}|^2 \, dxdy+ \frac{\beta \varepsilon}{2} \int_{\Omega_s} \left(\mathfrak{e}_x^2+ \mathfrak{e}_y^2\right) \, dxdy.
        \end{align*}
        Thus, the estimation follows 
        \begin{equation}\label{I_3_value_gen_err_thm}
            \varepsilon I_3 \leq \frac{3\beta \varepsilon}{2} \int_{\Omega_s}|\nabla \mathfrak{e}|^2\, dxdy+{\beta} \int_{\Omega_s} |\mathfrak{e}|^2 \, dxdy.
        \end{equation}
        \textbf{Estimation for $\varepsilon I_4$:}
        \begin{align*}
            \varepsilon I_4=& \varepsilon \int_{\Omega_s}\left(\mathfrak{e}_x\mathcal{N}_{int_x,{}{{p}^*}}(x,y,t)+\mathfrak{e}_y\mathcal{N}_{int_y,{}{{p}^*}}(x,y,t)\right)\, dxdy\\
            \leq& \frac{1}{2} \int_{\Omega_s} \left|\sqrt{\varepsilon}~ \mathcal{N}_{int_x,{}{{p}^*}}(x,y,t)\right|^2 \, dxdy+ \frac{1}{2} \int_{\Omega_s} \left|\sqrt{\varepsilon}~\mathcal{N}_{int_y,{}{{p}^*}}(x,y,t)\right|^2 \, dxdy+ \frac{\varepsilon}{2} \int_{\Omega_s} (\mathfrak{e}_x^2+ \mathfrak{e}_y^2) \, dxdy.
        \end{align*}
        We will use the notation $\sqrt{\varepsilon}~\mathcal{N}_{int_{\mathbb{n}}}= \mathcal{N}_{int_{\mathbb{n}}}^{\varepsilon}, \mathbb{n}=x,y$ and $ \mathcal{K}_{\partial \Omega_s}^{I_1}(u,u_{N,{}{{p}^*}}) \equiv \mathcal{K}_{\partial \Omega_s}^{I_1},$  as our convenience, for the remaining of the proof.
        Therefore, applying standard inequalities follows
        \begin{equation}\label{I_4_value_gen_err_thm}
            \varepsilon I_4 \leq \frac{1}{2} \int_{\Omega_s} \left(\left| \mathcal{N}_{int_x,{}{{p}^*}}^{\varepsilon}(x,y,t)\right|^2 + \left|\mathcal{N}_{int_y,{}{{p}^*}}^{\varepsilon}(x,y,t)\right|^2\right) \, dxdy+ \frac{\varepsilon}{2} \int_{\Omega_s}|\nabla \mathfrak{e}|^2 \, dxdy.
        \end{equation}
        Using all estimates (\ref{I_1_value_gen_err_thm})-(\ref{I_4_value_gen_err_thm}) we obtain
        \begin{equation}\label{half_err_H1_norm}
            \begin{aligned}
            \frac{\varepsilon}{2}\frac{d}{dt}\int_{\Omega_s}|\nabla \mathfrak{e}|^2 \, dxdy \leq & \mathcal{K}_{\partial \Omega_s}^{I_1}+ 3 \alpha \varepsilon \int_{\Omega_s}|\nabla \mathfrak{e}|^2 \, dxdy+ \frac{3\beta \varepsilon}{2} \int_{\Omega_s}|\nabla \mathfrak{e}|^2\, dxdy+{\beta } \int_{\Omega_s} |\mathfrak{e}|^2 \, dxdy\\
            &+ \frac{1}{2} \int_{\Omega_s} \left(|\mathcal{N}_{int_x,{}{{p}^*}}^{\varepsilon}(x,y,t)|^2 + |\mathcal{N}_{int_y,{}{{p}^*}}^{\varepsilon}(x,y,t)|^2\right) \, dxdy+ \frac{\varepsilon}{2} \int_{\Omega_s}|\nabla \mathfrak{e}|^2 \, dxdy.
        \end{aligned}
        \end{equation}
        Now, we get
        \begin{align*}
            \frac{\varepsilon}{2}\frac{d}{dt}\int_{\Omega_s}|\nabla \mathfrak{e}|^2 \, dxdy  
            \leq & \beta \int_{\Omega_s}|\mathfrak{e}|^2 \, dxdy
            +\left(\frac{6 \alpha+3\beta +1}{2}\right)\varepsilon\int_{\Omega_s}|\nabla \mathfrak{e}|^2 \, dxdy
            + \mathcal{K}_{\partial \Omega_s}^{I_1}\\
            &+ \frac{1}{2} \int_{\Omega_s} \left(|\mathcal{N}_{int_x,{}{{p}^*}}^{\varepsilon}(x,y,t)|^2 + |\mathcal{N}_{int_y,{}{{p}^*}}^{\varepsilon}(x,y,t)|^2\right) \, dxdy.
        \end{align*}
        Choose $T_1\leq \overline{T}\leq T$, $T\in(0,1)$ such that
        \begin{align*}
             {\varepsilon}&\frac{d}{dt}\int_{\Omega_s}|\nabla \mathfrak{e}|^2 \, dxdy+ \frac{\mathcal{K}_{\partial \Omega_s}^{I_1}}{\overline{T}}\\
             \leq & \left(\frac{6 \alpha+3\beta +2}{T_1}\right)\left[\varepsilon\int_{\Omega_s}|\nabla \mathfrak{e}|^2 \, dxdy+ \mathcal{K}_{\partial \Omega_s}^{I_1}\right]+ 2\beta \int_{\Omega_s}{|\mathfrak{e}|^2} \, dxdy\\
             &+  \int_{\Omega_s} \left(|\mathcal{N}_{int_x,{}{{p}^*}}^{\varepsilon}(x,y,t)|^2 + |\mathcal{N}_{int_y,{}{{p}^*}}^{\varepsilon}(x,y,t)|^2\right) \, dxdy.
        \end{align*}
        Integrating both sides with respect to $t\in (0,\overline{T}]$, we get
        \begin{align*}
            {\varepsilon}\int_{\Omega_s}&|\nabla \mathfrak{e}(x,y,\overline{T})|^2 \, dxdy+ \int_0^{\overline{T}}\frac{\mathcal{K}_{\partial \Omega_s}^{I_1}}{\overline{T}}\, dt\\
            \leq & \varepsilon\int_{\Omega_s}|\nabla \mathfrak{e}(x,y,0)|^2 \, dxdy
            + \int_0^{T} \int_{\Omega_s} \left(|\mathcal{N}_{int_x,{}{{p}^*}}^{\varepsilon}(x,y,t)|^2 + |\mathcal{N}_{int_y,{}{{p}^*}}^{\varepsilon}(x,y,t)|^2\right) \, dxdy dt\\
             &+\left(\frac{6  \alpha +3\beta +2}{T_1}\right)\left[\varepsilon\int_0^{\overline{T}}\int_{\Omega_s}|\nabla \mathfrak{e}|^2 \, dxdydt+ \int_0^{\overline{T}}\mathcal{K}_{\partial \Omega_s}^{I_1}\, dt\right]+2 \beta \int_0^{\overline{T}}\int_{\Omega_s}{|\mathfrak{e}|^2} \, dxdy dt.
        \end{align*}
        This gives
        \begin{align*}
            {\varepsilon}\int_{\Omega_s}&|\nabla \mathfrak{e}(x,y,\overline{T})|^2 \, dxdy+ {\mathcal{K}_{\partial \Omega_s}^{I_1}}+\varepsilon\int_{\Omega_s}|\nabla \mathfrak{e}(x,y,0)|^2 \, dxdy\\
            \leq &\left(\frac{6 \alpha+3\beta +2}{T_1}\right) \times 
            \int_0^{\overline{T}} \left[ \varepsilon \int_{\Omega_s}|\nabla \mathfrak{e}|^2 \, dxdy+ \mathcal{K}_{\partial \Omega_s}^{I_1}\, + \varepsilon \int_{\Omega_s}|\nabla \mathfrak{e}(x,y,0)|^2 \, dxdy\,\right]dt\\
            &+  \int_0^{T} \int_{\Omega_s} \left(|\mathcal{N}_{int_x,{}{{p}^*}}^{\varepsilon}(x,y,t)|^2 + |\mathcal{N}_{int_y,{}{{p}^*}}^{\varepsilon}(x,y,t)|^2\right) \, dxdy dt+ 2 \beta\mathcal{E}_{L_2}^{G}
        \end{align*}
        By Gronwall's inequality, we get
        \begin{equation*}
            {\varepsilon}\int_{\Omega_s}|\nabla \mathfrak{e}(x,y,\overline{T})|^2 \, dxdy+ {\mathcal{K}_{\partial \Omega_s}^{I_1}}+ \varepsilon\int_{\Omega_s}|\nabla \mathfrak{e}(x,y,0)|^2 \, dxdy
            \leq e^{\left(\frac{6  \alpha+3\beta +2}{T_1}\right)T}\times \mathcal{G}_{en},
        \end{equation*}
        where 
        \begin{equation*}
            \mathcal{G}_{en}= \int_0^{T} \int_{\Omega_s} \left(|\mathcal{N}_{int_x,{}{{p}^*}}^{\varepsilon}(x,y,t)|^2 + |\mathcal{N}_{int_y,{}{{p}^*}}^{\varepsilon}(x,y,t)|^2\right) \, dxdy dt+ 2 \beta\mathcal{E}_{L_2}^{G}.
        \end{equation*}
        Since ${\mathcal{K}_{\partial \Omega_s}^{I_1}}\geq 0$, we have
        \begin{align*}
            {\varepsilon}\int_{\Omega_s}|\nabla \mathfrak{e}(x,y,\overline{T})|^2 \, dxdy
            \leq & {\varepsilon}\int_{\Omega_s}|\nabla \mathfrak{e}(x,y,\overline{T})|^2 \, dxdy+ {\mathcal{K}_{\partial \Omega_s}^{I_1}}+ \varepsilon\int_{\Omega_s}|\nabla \mathfrak{e}(x,y,0)|^2 \, dxdy\\
            \leq & \mathcal{G}_{en} \times e^{\left(\frac{6  \alpha+3\beta +2}{T_1}\right)T}.
        \end{align*}
        This gives
        \begin{equation}\label{eq: final_energy_inequality}
            {\varepsilon}\int_{\Omega_s}|\nabla \mathfrak{e}(x,y,\overline{T})|^2 \, dxdy
            \leq  \mathcal{G}_{en} \times e^{\left(\frac{ 6  \alpha+3\beta +2}{T_1}\right)T}.
        \end{equation}
        {}{Hence the required estimate follows from Theorem \ref{th:gronwall_genearlization_err_l^2_norm} and the inequality \eqref{eq: final_energy_inequality}.}
    \end{proof}

\section{Stability for System of Equations with ENPINN Loss}\label{sec: appen_Stability for System}

\noindent\textbf{Theorem \ref{th: sys_gronwall_genearlization_err_l^2_norm}:}
    Let {$u_s \in {H^2(\Omega) \cap C^0(\overline{\Omega})}$} denote the exact solution of \eqref{eq: system_equation}. Let $u_{s_N,p^*}$ be the approximation of $u_s$.
    Then the generalization error satisfies the following inequality
    \begin{equation*}\label{appen_gronwall_genearlization_err_l^2_norm}
        \|\mathfrak{e}_1\|_{{L^2(\Omega)}}^2 + \|\mathfrak{e}_2\|_{{L^2(\Omega)}}^2= \|u_1-u_{1_N,{{p}^*}}\|_{{L^2(\Omega)}}^2 + \|u_2-u_{2_N,{{p}^*}}\|_{{L^2(\Omega)}}^2 \leq e^{\kappa_{L_2}^s}\times \mathcal{G}_{L_2}^s,
    \end{equation*}
    where $\kappa_{L_2}^s={(2\beta_3 + 1)T}$, $\mathcal{G}_{L_2}^s = \|\mathcal{N}_{ini,{p}^*}^1\|_{L^2(\Omega_s)}^2 + \|\mathcal{N}_{ini,{p}^*}^2\|_{L^2(\Omega_s)}^2 + T^{1/2} \|\mathcal{N}_{spb,{p}^*}^1\|_{L^2(\Omega)}  \mathcal{K}_{\partial \Omega_s}^{\nabla \mathfrak{e}_1} + T^{1/2} \|\mathcal{N}_{spb,{p}^*}^2\|_{L^2(\Omega)}  \mathcal{K}_{\partial \Omega_s}^{\nabla \mathfrak{e}_2} + \|\mathcal{N}_{int,{p}^*}^1\|_{L^2(\Omega)}^2 + \|\mathcal{N}_{int,{p}^*}^2\|_{L^2(\Omega)}^2$. The positive constant $\mathcal{K}_{\partial\Omega_s}^{\nabla \mathfrak{e}_i}$ is independent of $\varepsilon$ and depends on the continuous solution $u_i$, the NN approximation $u_{i_N,{{p}^*}}$, $i.e.,$ 
    $$\mathcal{K}_{\partial\Omega_s}^{\nabla \mathfrak{e}_i}\equiv \mathcal{K}_{\partial\Omega_s}^{\nabla \mathfrak{e}_i}(u_i,u_{i_N,{{p}^*}}), \quad i=1,2.$$

    \begin{proof}
        Using \eqref{eq: system_equation}, we obtain the following error equations of $\mathfrak{e}_1$ and $\mathfrak{e}_2$:
        \begin{equation}\label{sys_difference_con_prob}
            \begin{cases}
                \mathfrak{e}_{1_t}=\varepsilon \Delta \mathfrak{e}_1+ {a_{11}}{ \mathfrak{e}_{1,x}} + {a_{21}}{ \mathfrak{e}_{1,y}} - b_{11} \mathfrak{e}_1 - b_{12} \mathfrak{e}_2 + \mathcal{N}_{res, p^*}^1, \quad (x,y,t) \in \Omega,\\
                \mathfrak{e}_{2_t}=\mu \Delta \mathfrak{e}_2+ {a_{12}}{ \mathfrak{e}_{2,x}} + {a_{22}}{ \mathfrak{e}_{2,y}} - b_{21} \mathfrak{e}_1 - b_{22} \mathfrak{e}_2 + \mathcal{N}_{res, p^*}^2, \quad (x,y,t) \in \Omega,\\
                \mathfrak{e}_1(x,y,t) =\mathcal{N}_{spb, p^*}^1, \quad  \mathfrak{e}_2(x,y,t) =\mathcal{N}_{spb, p^*}^2, \quad (x,y)\in \partial \Omega_s, ~ t\in [0,T], \\
                \mathfrak{e}_1(x,y,0)=\mathcal{N}_{ini, p^*}^1, \quad \mathfrak{e}_2(x,y,0)=\mathcal{N}_{ini, p^*}^2, \quad (x,y)\in \overline{\Omega}_s.
            \end{cases}
        \end{equation}
        Multiplying the first equation in \eqref{sys_difference_con_prob} by $\mathfrak{e}_1$ and integrate over $\Omega_s$, we get  
    \begin{align*}
        \frac{1}{2}\frac{d}{dt}\displaystyle\int_{\Omega_s} |\mathfrak{e}_1(x,y,t)|^2 dxdy = &\, \varepsilon\int_{\Omega_s}\mathfrak{e}_1\Delta \mathfrak{e}_1 \, dxdy-a_{11}\int_{\Omega_s}\mathfrak{e}_1\mathfrak{e}_{1_x} \, dxdy\\
        &-a_{21}\int_{\Omega_s}\mathfrak{e}_1\mathfrak{e}_{1_y} \, dxdy - \beta_0\int_{\Omega_s} \mathfrak{e}_1^2 \, dxdy \\
        &- \beta_2\int_{\Omega_s} \mathfrak{e}_1 \mathfrak{e}_2 \, dxdy + \int_{\Omega_s} \mathfrak{e}_1\mathcal{N}_{int,{{p}^*}}^1 \, dxdy.
    \end{align*}
    Using integration by parts, Young's inequality, and the bounds from \cite{kumar2025uniformly}, we obtain
    \begin{align*}
        \frac{1}{2}&\frac{d}{dt}\displaystyle\int_{\Omega_s} |\mathfrak{e}_1(x,y,t)|^2 dxdy \\
         \leq & \, \varepsilon\int_{\partial\Omega_s}\mathfrak{e}_1\left(\nabla \mathfrak{e}_1. \hat{n}\right) \, ds - \varepsilon \int_{\Omega_s} |\nabla \mathfrak{e}_1|^2 \, dxdy -\dfrac{1}{2}\int_{\Omega_s}{(a_{11}, a_{21}).\nabla (\mathfrak{e}^2)\, dxdy} + \frac{\beta_3}{2}\int_{\Omega_s} |\mathfrak{e}_1|^2 \, dxdy\\
        & + \frac{\beta_3}{2}\int_{\Omega_s} |\mathfrak{e}_2|^2 \, dxdy + \frac{1}{2}\int_{\Omega_s} |\mathfrak{e}_1|^2 \, dxdy + \frac{1}{2}\int_{\Omega_s} |\mathcal{N}_{int,{{p}^*}}^1(x,y,t)|^2 \, dxdy.
    \end{align*}
    Integrating both sides over $[0,\overline{T}],$ $\overline{T}\leq T$, we get
    \begin{equation}\label{eq: sys_eq_err1}
        \begin{aligned}
            \int_{\Omega_s}& |\mathfrak{e}_1(x,y,\overline{T})|^2 dxdy  \\
            \leq &\int_{\Omega_s}|\mathcal{N}_{ini,{p}^*}^1(x,y)|^2\, dxdy + \left(\int_0^T\int_{\partial\Omega_s}|\mathcal{N}_{spb,{p}^*}^1(x,y,t)|^2 \, ds dt\right)^{\frac{1}{2}}T^{\frac{1}{2}}\mathcal{K}_{\partial \Omega_s}^{\nabla \mathfrak{e}_1}(u,\mathfrak{e}_1) \\
            &+(\beta_3 + 1)\int_0^{\overline{T}}\int_{\Omega_s}|\mathfrak{e}_1(x,y,t)|^2 \, dxdy dt + \beta_3 \int_0^{\overline{T}} \int_{\Omega_s}|\mathfrak{e}_2(x,y,t)|^2 \, dxdy dt\\
            &+ \int_0^T\int_{\Omega_s} |\mathcal{N}_{int,{p}^*}^1(x,y,t)|^2 \, dxdy dt.
        \end{aligned}
    \end{equation}
    Similarly, from the 2nd equation of \eqref{sys_difference_con_prob}, and from \eqref{eq: sys_eq_err1}, we get
    \begin{equation}\label{eq: sys_eq_err2}
        \begin{aligned}
            \int_{\Omega_s} &|\mathfrak{e}_2(x,y,\overline{T})|^2 dxdy  \\
            \leq &\int_{\Omega_s}|\mathcal{N}_{ini,{p}^*}^2(x,y)|^2\, dxdy + \left(\int_0^T\int_{\partial\Omega_s}|\mathcal{N}_{spb,{p}^*}^2(x,y,t)|^2 \, ds dt\right)^{\frac{1}{2}}T^{\frac{1}{2}}\mathcal{K}_{\partial \Omega_s}^{\nabla \mathfrak{e}_2}(u,\mathfrak{e}_2) \\
            &+(\beta_3 + 1)\int_0^{\overline{T}}\int_{\Omega_s}|\mathfrak{e}_2(x,y,t)|^2 \, dxdy dt + \beta_3 \int_0^{\overline{T}} \int_{\Omega_s}|\mathfrak{e}_2(x,y,t)|^2 \, dxdy dt\\
            &+ \int_0^T\int_{\Omega_s} |\mathcal{N}_{int,{p}^*}^2(x,y,t)|^2 \, dxdy dt.
        \end{aligned}
    \end{equation}
    Adding \eqref{eq: sys_eq_err1} and \eqref{eq: sys_eq_err2}, we get
    \begin{equation*}
        \begin{aligned}
            \int_{\Omega_s} &|\mathfrak{e}_1(x,y,\overline{T})|^2 dxdy + \int_{\Omega_s} |\mathfrak{e}_2(x,y,\overline{T})|^2 dxdy \\
            \leq & \int_{\Omega_s}|\mathcal{N}_{ini,{p}^*}^1(x,y)|^2\, dxdy + \int_{\Omega_s}|\mathcal{N}_{ini,{p}^*}^2(x,y)|^2\, dxdy \\
            &+ \left(\int_0^T\int_{\partial\Omega_s}|\mathcal{N}_{spb,{p}^*}^1(x,y,t)|^2 \, ds dt\right)^{\frac{1}{2}}T^{\frac{1}{2}}\mathcal{K}_{\partial \Omega_s}^{\nabla \mathfrak{e}_1}(u,\mathfrak{e}_1) \\
            &+ \left(\int_0^T\int_{\partial\Omega_s}|\mathcal{N}_{spb,{p}^*}^2(x,y,t)|^2 \, ds dt\right)^{\frac{1}{2}}T^{\frac{1}{2}}\mathcal{K}_{\partial \Omega_s}^{\nabla \mathfrak{e}_2}(u,\mathfrak{e}_2) \\
            &+(2\beta_3 + 1)\int_0^{\overline{T}}\int_{\Omega_s}\left(|\mathfrak{e}_2(x,y,t)|^2 + |\mathfrak{e}_1(x,y,t)|^2\right) \, dxdy dt \\
            &+ \int_0^T\int_{\Omega_s} |\mathcal{N}_{int,{p}^*}^1(x,y,t)|^2 \, dxdy dt  + \int_0^T\int_{\Omega_s} |\mathcal{N}_{int,{p}^*}^2(x,y,t)|^2 \, dxdy dt.
        \end{aligned}
    \end{equation*}
    Applying Gronwall's inequality yields,
        \begin{equation*}
            \begin{aligned}
                \int_{\Omega_s} &|\mathfrak{e}_1(x,y,\overline{T})|^2 dxdy + \int_{\Omega_s} |\mathfrak{e}_2(x,y,\overline{T})|^2 dxdy  \\ 
                \leq & e^{(2\beta_3 + 1)T}\times \left(\int_{\Omega_s}|\mathcal{N}_{ini,{p}^*}^1(x,y)|^2\, dxdy +\int_{\Omega_s}|\mathcal{N}_{ini,{p}^*}^2(x,y)|^2\, dxdy \right)\\
                & + e^{(2\beta_3 + 1)T} \times \left(\left({T}\int_0^T\int_{\partial\Omega_s}|\mathcal{N}_{spb,p^*}^1(x,y,t)|^2 \, ds dt\right)^{\frac{1}{2}}\mathcal{K}_{\partial \Omega_s}^{\nabla\mathfrak{e}_1}\right) \\
                & + e^{(2\beta_3 + 1)T} \times \left(\left({T}\int_0^T\int_{\partial\Omega_s}|\mathcal{N}_{spb,p^*}^2(x,y,t)|^2 \, ds dt\right)^{\frac{1}{2}}\mathcal{K}_{\partial \Omega_s}^{\nabla\mathfrak{e}_2}\right)\\
                &+ e^{(2\beta_3 + 1)T} \times \left(\int_0^T\int_{\Omega_s} |\mathcal{N}_{int,p^*}^1(x,y,t)|^2 \, dxdy dt  + \int_0^T\int_{\Omega_s} |\mathcal{N}_{int,p^*}^2(x,y,t)|^2 \, dxdy dt \right)
            \end{aligned}
        \end{equation*}
        Now integrating over $[0, {T}]$ on both sides gives the required estimate.
    \end{proof}

\noindent\textbf{Theorem \ref{th: sys_gronwall_genearlization_err_energy_norm}:}
     Let {$u_s\in {H^3(\Omega) \cap C^0(\overline{\Omega})}$} satisfies \eqref{eq: system_equation}  and $u_{s_N,{{p}^*}}$ be the ENPINN approximation of $u_s$. Then the energy norm-based error will satisfy the following inequality:
    \begin{equation*}\label{appen_sys_gronwall_genearlization_err_energy_norm}
        \|\mathfrak{e}_1\|_{E}^2+  \|\mathfrak{e}_2\|_{E}^2
        \leq \mathcal{G}_{L_2}^s e^{\kappa_{L_2}^s}+\mathcal{G}_{en}^s e^{\kappa_{en}^s},
    \end{equation*}
     where $T_1 \leq T, \text{and }T \in (0,1]$,  $\kappa_{en}^s= \left(\frac{2\beta_3 +1}{T_1}\right)T$, and
     \begin{equation*}
         \begin{aligned}
            \mathcal{G}_{en}^s
             =&\, \|\mathcal{N}_{int_x,{p}^*}^{\varepsilon,1}(x,y,t)\|^2_{L^2(\Omega)}
             + \|\mathcal{N}_{int_y,{p}^*}^{\varepsilon,1}(x,y,t)\|^2_{L^2(\Omega)}
             + \|\mathcal{N}_{int_x,{p}^*}^{\varepsilon,2}(x,y,t)\|^2_{L^2(\Omega)}\\
             &+ \|\mathcal{N}_{int_y,{p}^*}^{\varepsilon,2}(x,y,t)\|^2_{L^2(\Omega)}
         \end{aligned}
     \end{equation*}
     and the quantities $\mathcal{G}_{L_2}^s, e^{\kappa_{L_2}^s}$ are defined in Theorem \ref{th: sys_gronwall_genearlization_err_l^2_norm}.
    \begin{proof}
        To compute the corresponding generalization error for energy norm, we just need to identify the term associated with $\varepsilon\|\nabla \mathfrak{e}_i\|_{L^2(\Omega)}$, for $i =1,2$.
        To proceed, we rewrite the first residual error equation \eqref{sys_difference_con_prob} as
        $$\mathfrak{e}_{1_t}=\varepsilon (\mathfrak{e}_{1_{xx}}+\mathfrak{e}_{1_{yy}})- a_{11}\mathfrak{e}_{1_x} - a_{21}\mathfrak{e}_{1_y}-b_{11}\mathfrak{e}_1 - b_{12}\mathfrak{e}_2 +\mathcal{N}_{int,{p}^*}^1.$$
        Multiply $\mathfrak{e}_{1_{xt}}$ with $\mathfrak{e}_{1_x}$ and $\mathfrak{e}_{1_{yt}}$ by $\mathfrak{e}_{1_y}$, and then summing both, yields
        \begin{align*}
            \mathfrak{e}_{1_x}&\mathfrak{e}_{1_{xt}}+\mathfrak{e}_{1_y}\mathfrak{e}_{1_{yt}}\\
            = & \, \varepsilon (\mathfrak{e}_{1_x}\mathfrak{e}_{1_{xxx}}+\mathfrak{e}_{1_x}\mathfrak{e}_{1_{yyx}}+\mathfrak{e}_{1_y}\mathfrak{e}_{1_{xxy}}+\mathfrak{e}_{1_y}\mathfrak{e}_{1_{yyy}})
            - a_{11}(\mathfrak{e}_{1_x} \mathfrak{e}_{1_{xx}} + \mathfrak{e}_{1_y} \mathfrak{e}_{1_{xy}}) - a_{21}(\mathfrak{e}_{1_x} \mathfrak{e}_{1_{yx}} + \mathfrak{e}_{1_y} \mathfrak{e}_{1_{yy}})\\
            & - b_{11} (\mathfrak{e}_{1_x}^2 + \mathfrak{e}_{1_y}^2) 
            - b_{12} (\mathfrak{e}_{1_x} \mathfrak{e}_{2_x} + \mathfrak{e}_{1_y} \mathfrak{e}_{2_y}) 
            +\mathfrak{e}_{1_x}\mathcal{N}_{int_x,{p}^*}^1 +\mathfrak{e}_{1_y}\mathcal{N}_{int_y,{p}^*}^2.
        \end{align*}
        Now using a similar proof as we have performed for the scalar equations in Theorem \ref{th:gronwall_genearlization_err_energy_norm}, we have
        \begin{equation*}
            \begin{aligned}
                \frac{\varepsilon}{2}\frac{d}{dt}\int_{\Omega_s}|\nabla \mathfrak{e}_1|^2 \, dxdy \leq & \mathcal{K}_{\partial \Omega_s}^{I_1^1}
                + \left(\frac{\beta_3 + 1}{2}\right) \int_{\Omega_s}|\nabla \mathfrak{e}_1|^2\, dxdy
                + \frac{\beta_3}{2} \int_{\Omega_s}|\nabla \mathfrak{e}_2|^2\, dxdy\\
                &+ \frac{1}{2} \int_{\Omega_s} \left(|\mathcal{N}_{int_x,{p}^*}^{\varepsilon,1}|^2 + |\mathcal{N}_{int_y,{p}^*}^{\varepsilon,1}|^2\right) \, dxdy.
            \end{aligned}
        \end{equation*}
        Consequently, we get the inequality for $\mathfrak{e}_2$. Adding these two inequalities gives the estimates below:
        \begin{equation*}
            \begin{aligned}
                {\varepsilon}\frac{d}{dt}&\int_{\Omega_s}|\nabla \mathfrak{e}_1|^2 \, dxdy + {\varepsilon}\frac{d}{dt}\int_{\Omega_s}|\nabla \mathfrak{e}_2|^2 \, dxdy \\
                \leq & \mathcal{K}_{\partial \Omega_s}^{I_1^1} + \mathcal{K}_{\partial \Omega_s}^{I_1^2}
                + \left({2\beta_3 + 1}\right) \int_{\Omega_s}(|\nabla \mathfrak{e}_1|^2 +
                |\nabla \mathfrak{e}_2|^2) \, dxdy\\
                &+ \frac{1}{2} \int_{\Omega_s} \left(|\mathcal{N}_{int_x,{p}^*}^{\varepsilon,1}|^2 + |\mathcal{N}_{int_y,{p}^*}^{\varepsilon,1}|^2\right) \, dxdy + \frac{1}{2} \int_{\Omega_s} \left(|\mathcal{N}_{int_x,{p}^*}^{\varepsilon,2}|^2 + |\mathcal{N}_{int_y,{p}^*}^{\varepsilon,2}|^2\right) \, dxdy.
            \end{aligned}
        \end{equation*}
        {Hence the desired estimate is obtained by integrating over $[0,\overline{T}],$ with $\overline{T}\leq T$ in the same manner as in Theorem \ref{th: sys_gronwall_genearlization_err_l^2_norm}, followed by an application of Gronwall's inequality.}
    \end{proof}

\end{document}